\theoremstyle{plain}
\newtheorem{theorem}{Theorem}[section]
\newtheorem{lemma}[theorem]{Lemma}
\newtheorem{proposition}[theorem]{Proposition}
\newtheorem{corollary}[theorem]{Corollary}
\theoremstyle{definition}
\newtheorem{definition}[theorem]{Definition}
\newtheorem{example}[theorem]{Example}
\newcommand{\R}{{\mathbb R}}
\newcommand{\IR}{{\bf I}{\mathbb R}}
\newcommand{\sle}{\sqsubseteq}
\newcommand{\M}{{\bf M}}
\newcommand{\dda}{\mathord{\hbox{\makebox[0pt][l]{\lower .6mm
                           \hbox{$\downarrow$}}$\downarrow$}}}
\newcommand{\dua}{\mathord{\hbox{\makebox[0pt][l]{\raise .6mm
                           \hbox{$\uparrow$}}$\uparrow$}}}
\newcommand{\input{shlabels}}{\input{shlabels}}
\begin{document}

\setlength{\pdfpageheight}{\paperheight}
\setlength{\pdfpagewidth}{\paperwidth}




\title{A generalisation of Henstock-Kurzweil integral to compact metric spaces}

\author{Abbas Edalat\\
           Department of Computing\\ Imperial College London\\
           {ae@ic.ac.uk}}

\maketitle
\date{}
\begin{abstract}
   
We introduce the notion of a gauge and of
a tagged partition (subordinate to a given gauge) by intersections of
open and closed sets of a compact metric space extending the
corresponding notions in Henstock-Kurzweil integration of real-valued
functions with respect to the Lebesgue measure on the unit interval. We
show that, for the integration of bounded functions with respect to a
normalised Borel measure $\mu$ on a compact metric space, the notion of a
gauge and an associated tagged partition, arise naturally from a
normalised simple valuation way-below the Borel measure. Then we consider the integration of unbounded functions with respect to a
normalised Borel measure on a compact metric space, for which the
Lebesgue integral may fail to exist. A pair of a tagged
partition and a gauge defines a simple valuation and we introduce a
partial order on these pairs, emulating the partial order of simple
valuations in the probabilistic power domain. We define the $D_\mu$-integral
of a real-valued function with respect to a Borel measure using the
limit of the net of the integrals of the simple valuations induced by
pairs of tagged partitions and gauges for the function. The $D_\mu$-integral
of functions on a compact metric space with respect to a normalised
Borel measure satisfies the basic properties of an integral and
generalises the Henstock-Kurzweil integral. We show that when the
Lebesgue integral of the function exists then the $D_\mu$-integral also exists
and they have the same value. We provide a family of real-valued
functions on the Cantor space that are $D_\mu$-integrable but not Lebesgue
integrable.
\end{abstract}

Key words:
Partition-Gauge Pairs;
$D_\mu$-Integral;
Lebesgue Integral;
Henstock-Kurzweil Integral

\section{Introduction}

In this the paper, we develop a generalisation of Henstock-Kurzweil integral for real-valued functions on compact metric spaces with respect to any bounded Borel measure. We first recall a few historical notes in this matter. In 1902, Henri Lebesgue introduced in his PhD thesis what is now called Lebesgue integration. It was clear from the outset that a simple function such as,
\[x\mapsto \frac{1}{x}\sin\frac{1}{x^3}:[0,1]\to \R_\bot,\] which is intuitively integrable since it has an improper Riemann integral as $x\to 0^+$, is not Lebesgue integrable as it is not absolutely integrable. 

To redress this situation, Arnaud Denjoy in 1912 and Oskar Perron in 1914 introduced a generalisation of the Lebesgue theory in which unbounded functions can have integrals without being absolutely integrable. While these two theories turned out to be equivalent, they were too complicated to enter into mainstream mathematics. In the 1950's Ralph Henstock and Jaroslav Kurzweil, working independently, developed a much simpler integration theory, now called Henstock-Kurzweil integration, which is equivalent to the integration theory of Denjoy and Perron. We now recall the definition of the Henstock-Kurzweil integral of a function~\cite{Bar01}.

A {\em gauge} on $[0,1]$ is a function $\gamma:[0,1]\to \R^+$, which is meant to generalise the norm of a partition of $[0,1]$ to one that depends on each point $x\in [0,1]$. A {\em tagged partition} $P$ of $[0,1]$ is a finite collection $(t_i,I_i)_{i=1}^K$ where $(I_i)_{i=1}^K$ is a partition of $[0,1]$ by the closed intervals $I_i$ for $1\leq i\leq K$ and $t_k\in I_k$ for each  $1\leq i\leq K$. The tagged partition $P=(t_i,I_i)_{i=1}^K$ is said to be {\em $\gamma$-fine} or {\em subordinate to $\gamma$} if $I_i\subset (t_i-\gamma(t_i),t_i+\gamma(t_i))$ for all $1\leq i\leq K$. Given a function $f:[0,1]\to \R_\bot$, the Riemann sum of the tagged partition $P$ for $f$ is given by,\[S(f,P)=\sum_{i=1}^Kf(t_k)\ell(I_k),\] where $\ell(I)$ is the length of the interval $I$. A function $f:[0,1]\to \R_\bot$ has Henstock-Kurzweil integral $a\in\R$ if for each $\epsilon>0$, there exists a gauge $\gamma$ such that $|a-S(f,P)|<\epsilon$ for all $\gamma$-fine tagged partitions of $[0,1]$. The Henstock-Kurzweil integral, which can also be defined for real-valued function on $\R^n$, is sometimes called the generalised Riemann integral since it follows the definition of the Riemann integral by extending the notion of a the norm of a partition. While there have been attempts to extend the Henstock-Kurzweil integration to more general spaces, these generalisations have lacked the simplicity of the original Henstock-Kurzweil integral for real-valued functions with respect to the Lebesgue measure on $[0,1]$. We note that Using the notion of a set-valued map on non-degenerate closed intervals, it is shown in~\cite{LZ96} that the Henstock-Kurzweil integral with respect to $[0,1]$ can also be defined using a notion of lower and upper Riemann sums.

\section{Partitioning by crescents}\label{d-int}
In this section, we will define a general notion of integral of real-valued functions with respect to a normalised Borel measure $\mu$ on a compact metric space. This so-called $D_\mu$-integral generalises the Lebesgue integral by allowing which only exists if the integrand is absolutely Lebesgue integrable. It thus generalises the Henstock-Kurzweil integral which is only defined for real-valued functions on Euclidean spaces with respect to the Lebesgue measure.

The Henstock-Kurzweil integral of a function $f:[0,1]\to \R$, when its exists, can be evaluated over any subinterval $[a,b]\subset [0,1]$. This situation is like the Riemann integral and is in contrast to the Lebesgue integral of $f$ which can be evaluated with respect to any measurable subset $E\subset [0,1]$.

We will thus seek a family of elementary subsets of $X$ which plays the same role in $D_\mu$-integration as the family of intervals of $[0,1]$ (i.e., open, closed or half-closed intervals) plays in Henstock-Kurzweil integration. We propose that for a general theory of integration on compact metric spaces, the corresponding notion of elementary set is given by a {\em crescent} defined as the intersection of an open set and a closed set. Since $X$ itself is both open and closed, it follows that open sets and closed sets are themselves crescents. 

Next consider the notion of a partition. In the Henstock-Kurzweil theory, integration is performed with respect to the Lebesgue measure on the unit interval in which the boundary of any interval has zero Lebesgue measure. In this case, one works with the notion of a non-overlapping partition of $[0,1]$, i.e., a finite set of closed intervals, with pairwise disjoint interiors, whose union is $[0,1]$. For an arbitrary Borel measure $\mu$, however, the boundary of a crescent, even in the case of intervals of $[0,1]$, can have non-zero $\mu$-measure. We therefore define a {\em partition} of a crescent $C$ as a disjoint union of $C$ by a finite number of crescents. 

In order to have an appropriate set of crescents of $X$ as basic sets for a general integration theory on $X$, we require two properties for our set which hold trivially for intervals of the unit interval:
\begin{itemize}
\item[(i)] The set of intervals of $[0,1]$ is closed under finite intersections.
\item[(ii)] For any two intervals $I_1,I_2\subset[0,1]$, we have the disjoint union $I_1=(I_1\setminus I_2)\cup (I_1\cap I_2)$.

\end{itemize}
We will require (i) and a generalisation of (ii) for the set of crescents we employ in order to develop a general theory of integration on compact metric spaces.

Let ${{\cal B}}$ be a basis of open subsets of $X$ (including the empty set) which is closed under finite (including empty) intersections. The  {\em set of crescents generated by ${\cal B}$} is defined as the set 
\[{\bf C}_{\cal B}=\{A_1\cap {A_2}^c:A_1,A_2\in {\cal B}\}.\]
\begin{definition}\label{part} We say ${\bf C}_{\cal B}$ is {\em partitionable} if the following two conditions hold:
  \begin{itemize}
  \item[(i)]  ${\bf C}_{\cal B}$ is closed under finite intersections.
    \item[(ii)] For any pair of crescents $C,C_0\in {\bf C}_{\cal B}$, there exists a finite collection of pairwise disjoint subsets $S_i\in {\bf C}_{\cal B}$, ($1\leq i\leq m$), satisfying:
      \[C\setminus C_0=S_1\cup S_2\cup\ldots\cup S_m.\]
  
      \end{itemize}
\end{definition}
If ${\bf C}_{\cal B}$ is partitionable, then it satisfies conditions (i) and (ii) of the collection of sets in~\cite[pages 1489-90]{Pfe93}. The following two results (Propositions~\ref{induc} and~\ref{cover-crescent} then follow from properties 1.1, 1.2, 1.3 in~\cite[pages 1490-91]{Pfe93}. We however provide shorter proofs for the collection  ${\bf C}_{\cal B}$ below. 
\begin{proposition}\label{induc}
  
  If ${\bf C}_{\cal B}$ is partitionable and $C,C_i\in {\bf C}_{\cal B}$ for $1\leq i\leq n$, then there exist a finite set of pairwise disjoint crescents $S_j\in {\bf C}_{\cal B}$ for $1\leq j\leq t$ such that
  \[C\setminus\bigcup_{1\leq i\leq n}C_i=\bigcup_{1\leq j\leq t}S_j.\]
  \end{proposition}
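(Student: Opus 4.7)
The plan is to prove this by induction on $n$, the number of crescents being subtracted from $C$.

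The base case $n=1$ is exactly condition (ii) in Definition~\ref{part}, so there is nothing to do.

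For the inductive step, I would assume the result for $n-1$ and write
\[
C \setminus \bigcup_{1 \leq i \leq n} C_i \;=\; \left(C \setminus \bigcup_{1 \leq i \leq n-1} C_i\right) \setminus C_n.
\]
By the induction hypothesis, the inner expression equals a finite disjoint union $\bigcup_{j=1}^{t'} S'_j$ with $S'_j \in {\bf C}_{\cal B}$. Subtracting $C_n$ distributes over this disjoint union to give
\[
C \setminus \bigcup_{1 \leq i \leq n} C_i \;=\; \bigcup_{j=1}^{t'} (S'_j \setminus C_n).
\]
Now I would apply condition (ii) of Definition~\ref{part} to each pair $(S'_j, C_n)$ to write each $S'_j \setminus C_n$ as a finite disjoint union of crescents in ${\bf C}_{\cal B}$. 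Collecting all these pieces across $j$ yields a finite collection of crescents whose union is $C \setminus \bigcup_i C_i$.

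The only slight subtlety, and what I would be careful to record, is that the overall family is indeed pairwise disjoint: two pieces coming from the same $S'_j$ are disjoint by the application of condition (ii), and two pieces coming from different $S'_j$ and $S'_k$ are disjoint because they are contained in the disjoint sets $S'_j$ and $S'_k$ respectively. This is a routine observation rather than a genuine obstacle, so I do not anticipate any real difficulty; the proof is essentially a clean induction peeling off one $C_i$ at a time and invoking the binary relative-complement property.
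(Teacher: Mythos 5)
Your proof is correct and follows essentially the same route as the paper's: induction on $n$, distributing the subtraction of $C_n$ over the disjoint union supplied by the inductive hypothesis, applying Definition~\ref{part}(ii) to each piece $S'_j \setminus C_n$, and noting that disjointness is inherited. The only cosmetic difference is that the paper starts the induction at $n=0$ rather than $n=1$.
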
\begin{proof}
  Using induction on $n\in {\mathbb N}$, the base case for $n=0$ holds by Definition~\ref{part}(ii). Suppose for $n=k-1$, for some $k\geq 1$, the statement holds. Then, there exists a finite collection of pairwise disjoint sets $S_j\in {\bf C}_{\cal B}$ for, say, $1\leq j\leq m$ such that 
  \[C\setminus\bigcup_{1\leq i\leq k}C_i=\left(C\setminus\bigcup_{1\leq i\leq k-1}C_i\right)\setminus C_k\]\[=\left(\bigcup_{1\leq j\leq m} S_j\right)\setminus C_k=\bigcup_{1\leq j\leq m}\left (S_j\setminus C_k\right ),\]
    where the sets $S_j\setminus C_k$ are pairwise disjoint for  $1\leq j\leq m$. Since  ${\bf C}_{\cal B}$ is partitionable, for each set $S_j\setminus C_k$ ($1\leq j\leq m$), there exists a partition by elements of ${\bf C}_{\cal B}$. Thus, the union of these partitions gives the desired partition for $C\setminus\bigcup_{1\leq i\leq k}C_i$.

\end{proof}

We now show that for an abstract compact metric space there is a canonical way to obtain a partitionable set of crescents. Recall that the exterior of a set $S$ is defined as $S^e:=(S^c)^\circ$, which satisfies $(S^e)^c=\overline{S}$ for any subset $S$ of a topological space.
\begin{definition}
We say a basis of open sets of $X$ is {\em full} if it is closed under finite unions, finite intersections and exterior.

  \end{definition}
\begin{proposition}\label{pc}
  Let ${\cal B}$ be a full basis of open subsets of $X$. Then:
	\begin{itemize}
	\item[(i)] For any $A_1,A_2\in {\cal B}$, we have $A\cap\overline{A_2}\in {\bf C}_{\cal B}$.
		\item[(ii)] ${\bf C}_{\cal B}$ is partitionable.
	\end{itemize}
	\end{proposition}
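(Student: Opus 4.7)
The plan is as follows. Part (i) is a short rewriting exercise: I will use the identity $(S^e)^c=\overline{S}$ recalled just before the proposition. Since ${\cal B}$ is full, $A_2^e\in {\cal B}$, so $\overline{A_2}=(A_2^e)^c$, and therefore
\[A_1\cap\overline{A_2}\;=\;A_1\cap (A_2^e)^c\]
is visibly of the form $B\cap B'^c$ with $B,B'\in {\cal B}$, i.e., a crescent in ${\bf C}_{\cal B}$.

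For part (ii) I will verify the two clauses of Definition~\ref{part} by direct set-algebra, using only that ${\cal B}$ is closed under binary unions and intersections. For closure under intersections, given $A_1\cap B_1^c$ and $A_2\cap B_2^c$ in ${\bf C}_{\cal B}$, I rewrite
\[(A_1\cap B_1^c)\cap(A_2\cap B_2^c)\;=\;(A_1\cap A_2)\cap(B_1\cup B_2)^c,\]
which is a crescent since $A_1\cap A_2$ and $B_1\cup B_2$ both lie in ${\cal B}$.

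For the partitioning clause, write $C=A\cap B^c$ and $C_0=A_0\cap B_0^c$. Using $C_0^c=A_0^c\cup B_0$ and refining so as to obtain a disjoint union, I would decompose
\[C\setminus C_0\;=\;(C\cap A_0^c)\;\cup\;(C\cap A_0\cap B_0),\]
where the two parts are disjoint because the first lies in $A_0^c$ while the second lies in $A_0$. The first part equals $A\cap(B\cup A_0)^c$ and the second equals $(A\cap A_0\cap B_0)\cap B^c$; both are in ${\bf C}_{\cal B}$ by closure of ${\cal B}$ under finite unions and intersections.

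I do not expect any real difficulty; the argument is elementary. The one step that deserves care is the choice of a disjoint splitting in the last display: the naive guess $(C\cap A_0^c)\cup(C\cap B_0)$ need not be disjoint, and the trick is to throw an extra factor $A_0$ into the second piece, which costs nothing since everything remains a crescent. Note incidentally that the exterior-closure hypothesis is used only for part (i); part (ii) goes through using closure under unions and intersections alone.
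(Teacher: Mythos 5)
Your proof is correct and follows essentially the same route as the paper's: the same use of $\overline{A_2}=(A_2^e)^c$ for (i), the same identity $(A_1\cap B_1^c)\cap(A_2\cap B_2^c)=(A_1\cap A_2)\cap(B_1\cup B_2)^c$ for closure under intersection, and the identical two-crescent decomposition $C\setminus C_0=\bigl(A\cap(B\cup A_0)^c\bigr)\cup\bigl(A\cap A_0\cap B_0\cap B^c\bigr)$. Your derivation of the disjointness (inserting the factor $A_0$ into the second piece and checking the union is unchanged) is a slightly more direct way of reaching the same splitting that the paper obtains by subtracting the first piece from the second.
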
	\begin{proof}
	(i) Let $A_3=A_2^e$. Then, we have: $A_1\cap \overline{A_2}=A_1\cap (A_2^e)^c=A_1\cap A_3^c\in {\bf C}_{\cal B}$.
	
	(ii) Let $C,C_0\in {\bf C}_{\cal B}$. Thus, $C=A\cap B^c$ and $C_0=A_0\cap B_0^c$. Then $C\cap C_0=A\cap A_0\cap B^c\cap B_0^c=(A\cap A_0)\cap (B\cup B_0)^c\in {\bf C}_{\cal B}$.

  Moreover,
  \[C\setminus C_0=(A\cap B^c)\cap (A_0^c\cup B_0)=(A\cap B^c\cap A_0^c)\cup (A\cap B^c\cap B_0)\]
  \[=(A\cap (B\cup A_0)^c)\cup \left[(A\cap B^c\cap B_0)\setminus (A \cap (B\cup A_0)^c)\right]\]
  \[=(A\cap (B\cup A_0)^c)\cup\left[(A\cap B^c\cap B_0)\cap (A^c\cup (B\cup A_0))\right]\]
  \[=(A\cap (B\cup A_0)^c)\cup (A\cap A_0\cap B_0\cap B^c)=C_1\cup C_2,\]
  where $C_1=A\cap (B\cup A_0)^c$ and $C_2=A\cap A_0\cap B_0\cap B^c$ are disjoint crescents in ${\bf C}_{\cal B}$. 

\end{proof}

\begin{proposition}\label{cover-crescent}
  If $(U)_{i\in I}$ is an open cover of $\overline{C}$ for $C\in  {\bf C}_{\cal B}$, then there exists a finite number of disjoint sets $C_j\in  {\bf C}_{\cal B}$, with $1\leq j\leq n$, such that $C=\bigcup_{1\leq j\leq n}C_j$ and for each $j\in\{1,\ldots,n\}$ there exists $i\in I$ such that $C_j\subset U_i$.
\end{proposition}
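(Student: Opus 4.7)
The plan is to use compactness of $\overline{C}$ to reduce the cover to a finite one drawn from the basis $\mathcal{B}$, and then disjointify via Proposition~\ref{induc}.

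First, since $X$ is a compact metric space and $\overline{C}$ is closed, $\overline{C}$ is compact. For each $x \in \overline{C}$, pick $i(x) \in I$ with $x \in U_{i(x)}$, and then (since $\mathcal{B}$ is a basis) choose $B_x \in \mathcal{B}$ with $x \in B_x \subset U_{i(x)}$. By compactness, finitely many such basic open sets, say $B_1, \ldots, B_m \in \mathcal{B}$, cover $\overline{C}$, with $B_k \subset U_{i_k}$ for some $i_k \in I$. Note that each $B_k$ is itself a crescent, since $\emptyset \in \mathcal{B}$ gives $B_k = B_k \cap \emptyset^c \in {\bf C}_{\mathcal{B}}$.

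Next, I would carry out the standard disjointification, making sure to stay inside ${\bf C}_{\mathcal{B}}$. For each $k \in \{1, \ldots, m\}$, the intersection $C \cap B_k$ is a crescent by partitionability (condition~(i) of Definition~\ref{part}). Set
\[
D_k \;:=\; (C \cap B_k) \setminus (B_1 \cup \cdots \cup B_{k-1}).
\]
By Proposition~\ref{induc} applied to the crescent $C \cap B_k$ and the crescents $B_1, \ldots, B_{k-1}$, the set $D_k$ can be written as a finite disjoint union of crescents $C_{k,1}, \ldots, C_{k,n_k} \in {\bf C}_{\mathcal{B}}$. Each $C_{k,\ell}$ lies in $B_k$ and hence in $U_{i_k}$, which gives the required inclusion into some member of the original cover.

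Finally, I would verify that the combined family $\{C_{k,\ell}\}_{k,\ell}$ partitions $C$. Pairwise disjointness within a fixed $k$ holds by Proposition~\ref{induc}, while for $k < k'$ we have $C_{k,\ell} \subset B_k$ and $C_{k',\ell'} \subset D_{k'} \subset B_k^c$, so these are disjoint too. For the covering property, the usual telescoping identity
\[
\bigcup_{k=1}^m \big(B_k \setminus (B_1 \cup \cdots \cup B_{k-1})\big) \;=\; B_1 \cup \cdots \cup B_m
\]
yields $\bigcup_{k,\ell} C_{k,\ell} = C \cap (B_1 \cup \cdots \cup B_m)$, and since $C \subset \overline{C} \subset B_1 \cup \cdots \cup B_m$ this equals $C$.

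The substantive content is contained in the compactness argument and in Proposition~\ref{induc}; the main thing to be careful about is that subtracting finitely many crescents from a crescent need not yield a single crescent, which is precisely what Proposition~\ref{induc} handles, so no separate obstacle arises beyond invoking it correctly.
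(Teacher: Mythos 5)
Your proof is correct, and it is in fact more complete than the one the paper gives. Both arguments begin the same way: use compactness of $\overline{C}$ to extract a finite subcover $B_1,\ldots,B_m$ from a basic refinement of $(U_i)_{i\in I}$, and then cut $C$ against these basic sets. The paper, however, simply puts $C_j=C\cap W_j$ and stops; those sets need not be pairwise disjoint, so the paper's proof as written does not actually deliver the disjointness asserted in the statement. Your additional step --- forming $D_k=(C\cap B_k)\setminus(B_1\cup\cdots\cup B_{k-1})$ and invoking Proposition~\ref{induc} to express each $D_k$ as a finite disjoint union of crescents --- is exactly what is needed to repair this, and your verification of disjointness across different $k$ (via $C_{k',\ell'}\subset B_k^c$ for $k<k'$) and of the covering identity is sound. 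One small point worth noting explicitly is that your observation $B_k=B_k\cap\emptyset^c\in{\bf C}_{\cal B}$ relies on the paper's convention that $\emptyset\in{\cal B}$, which the paper does state; with that in place, every application of Definition~\ref{part}(i) and Proposition~\ref{induc} in your argument is legitimate.
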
\begin{proof}
Take an open cover $(W_t)_{t\in T}$ of $\overline{C}$ by elements of the basis ${\cal B}$ that refines  $(U)_{i\in I}$, i.e. for each $t\in T$, there exists $i\in I$, such that $W_t\subset U_i$. Suppose $W_j$ for $1\leq j\leq n$ is a finite cover of  $\overline{C}$. Put $C_j=C\cap W_j$.  

\end{proof}

The appropriate choice of ${\cal B}$ depends on the context. We give examples of the basic cases one encounters in practice.
\begin{example}\label{unit} Let $X=[0,1]$ and ${\cal B}$ be the set of finite unions of open sub-intervals of $[0,1]$. Then ${\cal B}$ is full and ${\cal C}_B$ is the set of finite unions of all sub-intervals of $[0,1]$ and is partitionable. 
\end{example}
\begin{example}\label{cube} More generally, let $X=\prod_{1\leq i\leq n} [a_i,b_i]\subset \R^n$ be a hyper-rectangle and let ${\cal B}$ be the basis consisting of the finite unions of all open hyper-rectangles. Then ${\cal C}_B$ is the set consisting of finite unions of hyper-rectangles $R$ of $X$ such that some of the $2^n$ boundary faces of $R$ are contained in $R$ and the rest are in $R^c$. It is easy to see that  ${\cal B}$ is full and thus ${\cal C}_B$ is partitionable. 
\end{example}
\begin{example}\label{cantor}
Let $X=\{0,1\}^\omega$ be the Cantor space consisting of elements $x=x_0x_1\ldots$with $x_n\in\{0,1\}$ for $n\in{\mathbb N}$ with the metric defined for $x\neq y$ by $d(x,y)=1/2^n$, where $n\in{\mathbb N}$ is the least natural number such that $x_n\neq y_n$. Take ${\cal B}$ be the basis given by clopen sets, i.e., finite unions of cylinder sets of the form $B_{a_0\ldots a_{n-1}}=\{x\in X:x_i=a_i,0\leq i\leq n-1\}$ for a finite sequence $a_0a_1\ldots a_{n-1}\in \{0,1\}^{n}$. Then  ${\cal B}$ is full and thus ${\bf C}_{\cal B}={\cal B}$ is partitionable. 
  \end{example}

Note that the collection of open intervals of $[0,1]$ is obviously not closed under finite unions (or exterior) and thus the set of all intervals of $[0,1]$ in Example~\ref{unit} cannot be obtained by a basis ${\cal B}$ of open sets of $[0,1]$ satisfying the statement of Proposition~\ref{pc}. Thus, the notion of a partitionable set ${\bf C}_{\cal B}$ of crescents, generated by a full basis ${\cal B}$, allows a unifying theory which contains the fundamental case of Example~\ref{unit} as well as that of Proposition~\ref{pc}.

\section{Partition-gauge pairs and simple valuations}

For the rest of this paper, we fix a basis ${\cal B}$ of open sets that generates a partitionable set ${\bf C}_{\cal B}$ of crescents. In particular, a crescent is always meant to be an element of ${\bf C}_{\cal B}$ and a partition of a crescent $C\in  {\bf C}_{\cal B}$ is always meant to be a partition with respect to ${\bf C}_{\cal B}$.
\begin{definition}
  Let $P=\{{R_i}: 1\leq i\leq n\}$ be a partition of $C\in {\bf C}_{\cal B}$ with pairwise disjoint $R_i\in {\bf C}_{\cal B}$, for $1\leq i\leq n$, and $C=\bigcup_{1\leq i\leq n}{R_i}$. The {\em norm} of $P$ is defined as the real number \[\|P\|:=\max\{\mbox{diam}(R_i): 1\leq i\leq n\}.\]
  \end{definition}

We say a partition $P'$ {\em refines} a partition $P$, written $P\sle P'$, if for all $R'\in P'$, there exists $R\in P$ with $R'\subset R$. It then follows that $P\sle P'$ iff each crescent in $P$ is the union of some crescents in $P'$. Let ${\cal P}$ be the set of all partitions of $X$ and consider the poset $({\cal P},\sle)$.

\begin{proposition}\label{partition-lub}
For two partitions of $X$ given by $P_1=\{R_i:i\in I\}$ and  $P_2=\{S_j:j\in J\}$, their lub is the partition
\[P_1\vee P_2:=\{ R_i\cap S_j:i\in I,j\in J\}.\]\begin{proof}
Since ${\bf C}_{\cal B}$ is closed under finite intersections, $R_i\cap S_j$ is a crescent for each $i\in I,j\in J$. Clearly we have: $\bigcup_{i\in I,j\in J}R_i\cap S_j=X$. 

\end{proof}
 
\end{proposition}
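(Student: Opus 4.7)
The plan is to supplement the existing argument (which already verifies that each $R_i \cap S_j$ is a crescent and that $\bigcup_{i,j} R_i \cap S_j = X$) by checking three items: pairwise disjointness of the family $\{R_i \cap S_j : i\in I, j\in J\}$, the fact that $P_1 \vee P_2$ refines both $P_1$ and $P_2$, and the least-upper-bound property.

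First I would verify pairwise disjointness. If $(i,j)\neq(i',j')$, then either $i\neq i'$ or $j\neq j'$; in the former case $R_i \cap R_{i'} = \emptyset$ because $P_1$ is a partition, and in the latter $S_j \cap S_{j'} = \emptyset$ because $P_2$ is a partition, so in either case $(R_i \cap S_j) \cap (R_{i'} \cap S_{j'}) = \emptyset$. Combined with what the author has already shown, this establishes $P_1 \vee P_2 \in {\cal P}$.

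Next, the relations $P_1 \sle P_1\vee P_2$ and $P_2 \sle P_1\vee P_2$ follow immediately from the inclusions $R_i \cap S_j \subset R_i$ and $R_i \cap S_j \subset S_j$, which by the definition of $\sle$ recorded just before the proposition is precisely what refinement requires. Hence $P_1 \vee P_2$ is a common upper bound of $P_1$ and $P_2$ in $({\cal P},\sle)$.

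Finally, for the least-upper-bound property, I would take an arbitrary partition $Q = \{T_k : k \in K\}$ of $X$ satisfying $P_1 \sle Q$ and $P_2 \sle Q$, and show $P_1 \vee P_2 \sle Q$. Given any $T_k \in Q$, the hypothesis $P_1 \sle Q$ supplies some $i \in I$ with $T_k \subset R_i$, while $P_2 \sle Q$ supplies some $j \in J$ with $T_k \subset S_j$; therefore $T_k \subset R_i \cap S_j \in P_1 \vee P_2$, which is exactly the condition $P_1 \vee P_2 \sle Q$. I do not anticipate any substantive obstacle: this is the standard product-partition construction, and the only ingredient specific to the paper's setting is closure of ${\bf C}_{\cal B}$ under finite intersections, which the author has already invoked.
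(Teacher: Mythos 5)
Your proof is correct and follows the same product-partition construction as the paper; the paper's own proof only checks that each $R_i\cap S_j$ is a crescent and that the union is $X$, while you additionally verify pairwise disjointness, the upper-bound property, and minimality among upper bounds. Your version is therefore a more complete rendering of the same argument, with no difference in approach.
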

Recall that $O_r(x)$ is the open ball centred at $x\in X$ of radius $r>0$. 

\begin{definition} A {\em gauge} on $\overline{C}$ for $C\in {\bf C}_{\cal B}$  is a strictly positive map $\gamma:\overline{C}\to \R$. 
If $P=\{{R_i}: 1\leq i\leq n\}$ is a partition of $C$ and  $t_i\in {\overline{R_i}}$, for $1\leq i\leq n$, then the set of pairs \[\dot{P}=\{({R_i},t_i):1\leq i\leq n\}\] is called a {\em tagged partition} of $C$. The tagged partition $\dot{P}$ is said to be {\em $\gamma$-fine} on $X$, written $\dot{P}\prec \gamma$, if $\overline{R_i}\subset O_{\gamma(t_i)}(t_i)$ for $1\leq i\leq n$.

\end{definition}

\begin{proposition}\label{pg-exist}
If $\gamma$ is a gauge on $\overline{C}$ for $C\in {\bf C}_{\cal B}$, then there exists a  $\gamma$-fine tagged partition of $C$.

  \end{proposition}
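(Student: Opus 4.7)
The plan is to adapt Cousin's lemma from the classical Henstock--Kurzweil theory via a compactness argument by contradiction.

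I would first establish two auxiliary facts. Fact (a): for every $\varepsilon > 0$ and every crescent $C' \in \mathbf{C}_{\mathcal B}$, there exists a partition of $C'$ into crescents each of diameter less than $\varepsilon$; this is obtained by covering the compact set $\overline{C'}$ with basic open sets $B_y \in \mathcal{B}$ satisfying $y \in B_y \subset O_{\varepsilon/3}(y)$ and then invoking Proposition~\ref{cover-crescent} on a finite subcover. Fact (b): if $C = C_1 \sqcup \cdots \sqcup C_k$ is a partition of $C$ into disjoint crescents and each $C_j$ admits a $\gamma$-fine tagged partition, then concatenating these partitions yields a $\gamma$-fine tagged partition of $C$; contrapositively, if $C$ has none, then some piece $C_j$ has none.

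Now suppose for contradiction that $C$ admits no $\gamma$-fine tagged partition. Applying (a) and (b) iteratively, I build a nested sequence of non-empty crescents $C \supsetneq C^{(1)} \supsetneq C^{(2)} \supsetneq \cdots$ with $\mathrm{diam}(C^{(n)}) < 1/n$ such that no $C^{(n)}$ admits a $\gamma$-fine tagged partition. By Cantor's intersection theorem applied to the decreasing sequence of non-empty compact sets $\overline{C^{(n)}} \subset X$, there exists a point $p \in \bigcap_n \overline{C^{(n)}} \subset \overline{C}$; since $\gamma(p) > 0$, for any $n$ with $\mathrm{diam}(\overline{C^{(n)}}) < \gamma(p)$ every $x \in \overline{C^{(n)}}$ satisfies $d(x,p) < \gamma(p)$, so $\overline{C^{(n)}} \subset O_{\gamma(p)}(p)$. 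The singleton tagged partition $\{(C^{(n)}, p)\}$ is then $\gamma$-fine, contradicting the defining property of $C^{(n)}$.

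The main obstacle is fact (a), which threads together three ingredients: the basis property of $\mathcal{B}$ (so that arbitrarily small basic neighbourhoods are available at every point of $\overline{C'}$), the compactness of $\overline{C'}$ as a closed subset of $X$, and the partitionability of $\mathbf{C}_{\mathcal B}$ packaged in Proposition~\ref{cover-crescent}. A subtler naive attempt --- take a finite subcover of $\{O_{\gamma(t)/2}(t) : t \in \overline{C}\}$, partition $C$ by Proposition~\ref{cover-crescent}, and pick tags in the closures of the pieces --- fails because an arbitrary tag $s_j \in \overline{R_j}$ need not satisfy $\overline{R_j} \subset O_{\gamma(s_j)}(s_j)$: $\gamma(s_j)$ can be much smaller than $\gamma(t_{k(j)})$. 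Precisely this circularity is what the Cousin-type contradiction argument dissolves, by forcing the tag to be a limit point whose gauge value eventually dominates.
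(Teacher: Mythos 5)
Your proof is correct and follows essentially the same Cousin-type compactness argument as the paper: assume no $\gamma$-fine tagged partition of $C$ exists, repeatedly refine into pieces of shrinking diameter at least one of which still has no $\gamma$-fine tagged partition, extract a point $p$ of the nested intersection of closures, and contradict $\gamma(p)>0$ with the singleton tagged partition. The only substantive difference is that you explicitly justify your Fact (a) --- the existence of partitions of a crescent into pieces of arbitrarily small diameter --- via a cover by small basic open sets and Proposition~\ref{cover-crescent}, a step the paper's proof uses but leaves implicit.
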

\begin{proof}
Suppose, for a contradiction, that there exists no $\gamma$-fine tagged partition of $C$. Let $P_1=\{R_{1i}: 1\leq i\leq n_1\}$ be a partition of $C_0:=C$ by crescents $R_{1i}\in {\bf C}_{\cal B}$ with $1\leq i\leq n_1$ and norm bounded by $1$. Then there exists $i$ with $1\leq i\leq n_1$ such that there is no $\gamma$-fine tagged partition of the set ${R_{1i}}$. In fact, if each ${R_{1i}}$ for $1\leq i\leq n_1$ has a $\gamma$-fine tagged partition, then the union of these tagged partitions represents a $\gamma$-fine tagged partition for $C_0$ itself. Suppose, therefore, that $C_1:={{R_{1i_1}}}$ for some $i_1$ with $1\leq i_1\leq n_1$ and $\mbox{diam}(C_1)\leq 1$ has no $\gamma$-fine tagged partition. Consider a partition $P_2=\{R_{2i}: 1\leq i\leq n_2\}$ of $C_1$ with norm $1/2$ and let $C_2:=\overline{R_{2i_2}}$ for some $i_2$ with $1\leq i_2\leq n_2$ such that $\overline{R_{2i_2}}$ has no $\gamma$-fine partition. Iteratively, we construct a compact set $C_k\subset X_{k-1}$ with $\mbox{diam}(X _k)\leq 1/k$ that has no $\gamma$-fine tagged partition.  Then the nested set of compact subsets $\overline{C_k}$ with $\mbox{diam}(k)\leq 1/k$ will contain the singleton $\{x_0\}:=\bigcap\overline{ C_{k\geq 0}}$ with $x_0\in \overline{C_0}$. But $\gamma(x_0)>0$ as $\gamma$ is a gauge on $C_0=C$ and thus there exists $k\geq 0$ such that $C_k\subset O_{\gamma(x_0)}(x_0)$ which is a contradiction. 
\end{proof}

Suppose $\mu\in {\bf M}^1(X)$ is a normalised Borel measure on $X$. Since the support of $\mu$ is a closed set, without loss of generality, we assume the support of $\mu$ is $X$. 

The reader is referred to the Appendix for the basic notions in domain theory we will use in the rest of this section. Any partition $P=\{R_i: 1\leq i\leq n\}$ of $X$ induces a normalised simple valuation $\mu_P\in {\bf P}^1({\bf U}(X))$ defined as:
\[\mu_P=\sum_{i=1}^n\mu(R_i)\delta_{\overline{R_i}}\]
We say $\mu_P$ is {\em induced} by the partition $P$. For an open set $O\subset X$ and $\delta>0$, let $O^-_\delta:=\{x\in O: d(x,O^c)>\delta\}$, which is an open set. 

\begin{proposition}\label{partition-net}
  \begin{itemize}
\item[(i)] For any partition $P$: $\mu_P\sle \mu$ in ${\bf P}^1{\bf U}(X)$.
\item[(ii)]  If $P\sle P'$, then $\mu_P\sle \mu_{P'}$.
\item[(iii)]  If $P_i$ for $i\geq 0$ is an increasing sequence of partitions with $\lim_{i\to \infty} \|P_i\|=0$, then $\sup_i{\mu_{P_i}}=\mu$.
\item[(iv)] $\mu=\sup\{\mu_P:P\in {\cal P}\}$.
\end{itemize}
\end{proposition}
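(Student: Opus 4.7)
The plan is to dispatch (i), (ii), and (iv) using the standard tools from the probabilistic power domain, and to concentrate the real work on (iii). For (i), I would reduce to the Scott-open characterisation of the order on $\E^1\U(X)$ and check the inequality on the basic opens $\square U=\{K\in\U(X):K\subset U\}$ for $U\subset X$ open. The inequality $\mu_P(\square U)=\sum_{\overline{R_i}\subset U}\mu(R_i)=\mu\bigl(\bigcup_{\overline{R_i}\subset U}R_i\bigr)\le\mu(U)$ then follows from disjointness of $P$. For (ii), I would apply the splitting lemma for simple valuations with transport coefficients $t_{R,R'}:=\mu(R')$ if $R'\subset R$ and $0$ otherwise: the row sums evaluate to $\mu(R)$ because the elements of $P'$ contained in a given $R\in P$ partition $R$, the column sums give $\mu(R')$ trivially, and whenever $t_{R,R'}>0$ we have $\overline{R'}\subset\overline{R}$, i.e.\ $\overline{R}\sle\overline{R'}$ in $\U(X)$.

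The heart of the argument is (iii). Setting $\nu:=\sup_i\mu_{P_i}$, part (i) gives $\nu\sle\mu$, so equality reduces to the reverse inequality $\nu(\square U)\ge\mu(U)$ on every basic open $\square U$. The driving geometric observation is that once $\|P_i\|\le\delta$, every $R\in P_i$ whose closure meets $U^-_\delta$ already lies inside $U$: any $x\in\overline{R}\cap U^-_\delta$ satisfies $d(x,U^c)>\delta\ge\|P_i\|\ge\mathrm{diam}(\overline{R})$, forcing $\overline{R}\subset\overline{B}(x,\|P_i\|)\subset U$. Consequently $\mu(U^-_\delta)\le\mu_{P_i}(\square U)\le\nu(\square U)$, and because $U$ is open in a metric space we have $\bigcup_{\delta>0}U^-_\delta=U$, so continuity of $\mu$ from below yields $\mu(U^-_\delta)\nearrow\mu(U)$ as $\delta\to 0^+$.

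For (iv), the inequality $\sup_{P\in\mathcal{P}}\mu_P\sle\mu$ is immediate from (i). For the reverse, I would construct an increasing sequence $(P_i)$ with $\|P_i\|\to 0$ by selecting any sequence of partitions $(Q_i)$ of $X$ with $\|Q_i\|\le 1/i$---obtainable by iterated applications of Proposition~\ref{induc} to a finite $\mathcal{B}$-cover of $X$ by basic opens of diameter at most $1/i$---and setting $P_i:=Q_1\vee\cdots\vee Q_i$ via Proposition~\ref{partition-lub}; this sequence is monotone with $\|P_i\|\le\|Q_i\|\to 0$, so (iii) then yields $\mu=\sup_i\mu_{P_i}\sle\sup_P\mu_P$. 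I expect the main obstacle to be the geometric lower-bound step in (iii), namely the $U^-_\delta$-erosion argument, which is the only place where both the metric structure of $X$ and the vanishing norms $\|P_i\|\to 0$ enter essentially.
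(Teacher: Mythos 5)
Your proposal is correct and follows essentially the same route as the paper: the order is checked on the basic Scott opens $\Box O$, part (iii) rests on the same erosion argument via the sets $O^-_\delta$ (the paper's $O^-_{1/m}$), and (iv) follows from directedness of $\{\mu_P\}$ together with an increasing sequence of partitions of vanishing norm. You merely fill in two details the paper leaves implicit — the splitting-lemma verification of (ii), which the paper dismisses as ``Easy'', and the explicit construction of the small-norm sequence used in (iv).
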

\begin{proof}
  (i) Let $O\subset X$ be an open set so that $\Box O\subset {\bf U}(X)$ is a basic Scott open set. Then
  \[\mu_P(\Box O)=\sum_{\overline{R}\subset O}\mu(R)\leq \mu(O),\]
  where the latter step follows from the fact that the crescents of a partition are pairwise disjoint.

  (ii) Easy. 

  (iii) Let $O\subset X$ be any open set and $\epsilon>0$ be given.  Since $O=\bigcup_{n\geq 1}O^-_{1/n}$, we have $\mu(O)=\sup \bigcup_{n\geq 1}\mu(O^-_{1/n})$. Thus, there exists $m\geq 1$ with $\mu(O\setminus \mu(O^-_{1/m})<\epsilon$. Let $i\geq 0$ be such that $\|P_i\|<1/m$. Then $\mu_{P_m}(\Box O)\geq \mu(O^-_{1/m})>\mu(O)-\epsilon$. Since, $\epsilon$ is arbitrary, it follows that $\sup_i{\mu_{P_i}}(O)\geq \mu(O)$, which implies  $\sup_i{\mu_{P_i}}(O)=\mu(O)$ since, by part (i), we already know that $\sup_i{\mu_{P_i}}(O)\leq\mu(O)$.

 (iv) The collection $\{\mu_P:P\in {\cal P}\}$ is a directed set and thus its supremum is well-defined. Since it contains any increasing sequence as in (iii), it follows that its supremum is $\mu$.
  \end{proof}

For a compact subset $C\subset X$ and  $\epsilon>0$, the {\em $\epsilon$-expansion} of $C$ is defined as the compact set, \[C_\epsilon=\{x\in X: \exists y\in C.\, d(x,y)\leq \epsilon\}.\] Given a partition $P=\{R_i:1\leq i\leq n\}$ of $X$ and $\alpha>0$, the {\em $\alpha$-relaxation} of $\mu_P$ is the simple valuation defined as
\[\mu_{P,\alpha}:=\sum_{1\leq i\leq n} \mu(R_i)\delta_{K_i},\] where $K_i:=(\overline{R_i})_\alpha$,
Clearly $\mu_{P,\alpha}\sle \mu_P$ for any $\alpha>0$ and $\mu=\sup_{\alpha>0}\mu_{P,\alpha}$. Finally, for $1>\beta>0,\alpha>0$, we define
\[\mu_{P,\alpha,\beta}=\beta\delta_X+(1-\beta) \mu_{P,\alpha}.\]
\begin{proposition}\label{partition-way-below}
  The collection \[S_\mu=\{\mu_{P,\alpha,\beta}:P\in {\cal P},\alpha>0,\beta>0\}\] is a directed set of normalised simple valuations way-below $\mu$ with supremum $\mu$.
\end{proposition}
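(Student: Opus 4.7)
The plan is to establish four claims in turn: normalisation, directedness, that the supremum equals $\mu$, and way-below. Normalisation is immediate since $\beta+(1-\beta)\mu(X)=1$. For directedness, given $\mu_{P_i,\alpha_i,\beta_i}$ for $i=1,2$, I would take $P:=P_1\vee P_2$ (Proposition~\ref{partition-lub}), $\alpha:=\min(\alpha_1,\alpha_2)$, $\beta:=\min(\beta_1,\beta_2)$, and verify $\mu_{P_i,\alpha_i,\beta_i}\sle\mu_{P,\alpha,\beta}$ on each basic Scott open $\Box O$. Because $P$ refines $P_i$ and $\alpha\le\alpha_i$, whenever a cell $R_j\in P$ is contained in $R^{(i)}_k\in P_i$ we have $(\overline{R_j})_\alpha\subseteq(\overline{R^{(i)}_k})_{\alpha_i}$; combined with $1-\beta\ge 1-\beta_i$, this reduces the inequality to a straightforward bookkeeping at each $O$.

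For $\sup S_\mu=\mu$: the bound $\sup S_\mu\sle\mu$ follows from the chain $\mu_{P,\alpha,\beta}\sle\mu_{P,\alpha}\sle\mu_P\sle\mu$, the last step by Proposition~\ref{partition-net}(i). For the reverse, I would iterate suprema: as $\beta\to 0^+$, $\mu_{P,\alpha,\beta}\nearrow\mu_{P,\alpha}$; as $\alpha\to 0^+$, $\mu_{P,\alpha}\nearrow\mu_P$; and $\sup_P\mu_P=\mu$ by Proposition~\ref{partition-net}(iv).

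The main obstacle is the way-below relation $\mu_{P,\alpha,\beta}\ll\mu$, which I would attack directly from its definition. Let $D\subseteq{\bf P}^1({\bf U}(X))$ be directed with $\mu\sle\sup D$; I must produce $\nu\in D$ with $\mu_{P,\alpha,\beta}\sle\nu$. For each subset $S$ of the index set $\{1,\dots,n\}$ of $P$, set $U_S:=\bigcup_{i\in S}K_i^\circ$ and $Q_S:=\sum_{i\in S}\mu(R_i)$. The $\alpha$-expansion gives $\overline{R_i}\subseteq K_i^\circ$, so $U_S\supseteq\bigcup_{i\in S}R_i$ and $\mu(U_S)\ge Q_S$, with the strict inequality $\mu(U_S)>(1-\beta)Q_S$ whenever $Q_S>0$ since $\beta>0$. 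From $\sup_{\nu'\in D}\nu'(\Box U_S)\ge\mu(U_S)$, I can pick $\nu_S\in D$ with $\nu_S(\Box U_S)\ge(1-\beta)Q_S$, and by directedness of $D$ the finite family $\{\nu_S\}_S$ admits a common upper bound $\nu\in D$. For any proper open $O\subsetneq X$, taking $S:=\{i:K_i\subseteq O\}$ yields $U_S\subseteq O$ and hence $\nu(\Box O)\ge\nu(\Box U_S)\ge(1-\beta)Q_S=\mu_{P,\alpha,\beta}(\Box O)$; the case $O=X$ is handled by normalisation. The crux, and the role of the $\beta\delta_X$ perturbation together with the $\alpha$-expansion, is precisely to produce the strict gap $(1-\beta)Q_S<\mu(U_S)$ that permits extracting a finite-stage witness; without either the slack from $\alpha>0$ or that from $\beta>0$, the required supremum might only be attained in the limit.
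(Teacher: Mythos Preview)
Your direct attack on the way-below relation has a genuine gap: verifying $\mu_{P,\alpha,\beta}(\Box O)\le\nu(\Box O)$ for every open $O\subseteq X$ does \emph{not} imply $\mu_{P,\alpha,\beta}\sle\nu$ in ${\bf P}^1{\bf U}(X)$. The family $\{\Box O\}$ is a basis of the Scott topology on ${\bf U}(X)$ that is closed under finite intersections but not under finite unions, and the pointwise order of valuations is not determined by values on such a basis. Concretely, on $X=\{a,b,c\}$ with the discrete metric, take $\sigma=\tfrac12\delta_{\{a,b\}}+\tfrac12\delta_{\{b,c\}}$ and $\nu=\tfrac14\delta_{\{b\}}+\tfrac14\delta_{\{a,b\}}+\tfrac14\delta_{\{b,c\}}+\tfrac14\delta_X$: one checks $\sigma(\Box O)\le\nu(\Box O)$ for every open $O$, yet on the Scott open $U=\Box\{a,b\}\cup\Box\{b,c\}$ we get $\sigma(U)=1>3/4=\nu(U)$, so $\sigma\not\sle\nu$. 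Since the $\alpha$-expansions $K_i$ in your setting typically overlap, precisely this configuration can occur, and the element $\nu\in D$ you extract from the finitely many conditions $\nu(\Box U_S)\ge(1-\beta)Q_S$ need not dominate $\mu_{P,\alpha,\beta}$. The same objection applies to your directedness step, which you also phrase as ``bookkeeping at each $\Box O$''.

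The paper sidesteps this entirely by never unfolding the definition of $\ll$. It interpolates: for $0<\alpha'<\alpha$ and $0<\beta'<\beta$ the way-below splitting lemma for normalised simple valuations (Proposition~\ref{sp-way}) gives $\mu_{P,\alpha,\beta}\ll\mu_{P,\alpha',\beta'}$, since the mass $\beta>0$ sits at the bottom $X$ and the strict containments $(\overline{R_i})_{\alpha'}\subset((\overline{R_i})_\alpha)^\circ$ yield the required $K_i\ll K_i'$; then $\mu_{P,\alpha',\beta'}\sle\mu_{P,\alpha'}\sle\mu$ (the chain you already noted) gives $\mu_{P,\alpha,\beta}\ll\mu$. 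Directedness is obtained the same way, via the splitting lemma for $\sle$: the refinement and containment facts you observed translate directly into a transport plan $t_{ij}$, which is what actually certifies $\mu_{P_i,\alpha_i,\beta_i}\sle\mu_{P,\alpha,\beta}$. Your treatment of normalisation and of $\sup S_\mu=\mu$ is correct and matches the paper.
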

\begin{proof}
By the splitting lemma for normalised valuations we have $\mu_{P,\alpha,\beta}\ll \mu_{P,\alpha',\beta'}$ for $0<\alpha'<\alpha$ and $0<\beta'<\beta$. Since, under these assumptions,  $\mu_{P,\alpha,\beta}\ll \mu_{P,\alpha',\beta'}\sle \mu_{P,\alpha'}\sle \mu$, it follows that  $\mu_{P,\alpha,\beta}\ll\mu$. Given two partitions $P_1$ and $P_2$ with $\alpha_1,\beta_1,\alpha_2,\beta_2>0$ and their induced simple valuations $\mu_{P_1,\alpha_1,\beta_1}$ and $\mu_{P_2,\alpha_2,\beta_2}$, let $P=P_1\vee P_2$ with $\alpha=\min\{\alpha_1,\alpha_2\}$ and $\beta=\min\{\beta_1,\beta_2\}$. Then $\mu_{P_1,\alpha_1,\beta_1},\mu_{P_2,\alpha_2,\beta_2}\sle \mu_{P,\alpha,\beta}$, i.e., $S_\mu$ is a directed set of normalised simple valuations way-below $\mu$. On the other hand, for a given $P\in {\cal P}$, the subcollection  $\{\mu_{P,\alpha,\beta}:\alpha>0,\beta>0\}\subset S_\mu$ is directed with supremum $\mu_P$. It follows, by Proposition~\ref{partition-net}(iv), that $\mu=\sup S_\mu$. 

\end{proof}
We next develop some basic order-theoretic properties of tagged partitions and gauges. For two tagged partitions   \[\dot{P}_1=\{(R_{1i},t_{1i}):1\leq i\leq n\}\] \[\dot{P}_2=\{(R_{2j},t_{2j}):1\leq j\leq m\}\] of $C\in {\bf C}_{\cal B}$, we define a partial order by $\dot{P}_1\sle \dot{P}_2$ if $P_1\sle P_2$ and $\{t_{1i}:1\leq i\leq n\}\subset \{t_{2j}:1\leq j\leq m\}$. Let $\dot{{\cal P}}_C$ denote the set of all tagged partitions of $C$. We write $\dot{{\cal P}}:=\dot{{\cal P}}_X$.
\begin{proposition}\label{partition-directed}
The partial order $(\dot{{\cal P}}_C,\sle)$ is a directed set.

\end{proposition}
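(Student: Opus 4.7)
Given two tagged partitions $\dot{P}_1 = \{(R_{1i}, t_{1i})\}_{i=1}^n$ and $\dot{P}_2 = \{(R_{2j}, t_{2j})\}_{j=1}^m$ of $C$, the plan is to produce a common upper bound $\dot{P}$. The natural underlying partition is the meet-refinement $P := \{R_{1i} \cap R_{2j} : R_{1i} \cap R_{2j} \ne \emptyset\}$, which is a partition of $C$ into crescents by Definition~\ref{part}(i) (the argument of Proposition~\ref{partition-lub} carries over unchanged from $X$ to $C$), and it refines both $P_1$ and $P_2$.

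The subtlety lies in tagging so that every $t_{1i}$ and every $t_{2j}$ appears as a tag of $\dot{P}$. Since $R_{1i} = \bigcup_j (R_{1i} \cap R_{2j})$ is a finite disjoint union and closure commutes with finite unions, $\overline{R_{1i}} = \bigcup_j \overline{R_{1i}\cap R_{2j}}$; hence for each $i$ there is an index $j(i)$ with $t_{1i} \in \overline{R_{1i}\cap R_{2j(i)}}$, and symmetrically an index $i(j)$ for each $j$. I would attempt the assignment $R_{1i}\cap R_{2j(i)} \mapsto t_{1i}$ and $R_{1i(j)}\cap R_{2j} \mapsto t_{2j}$, with arbitrary closure points filling the remaining crescents. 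If no two distinct tag points are forced onto the same crescent, this assignment already yields the desired $\dot{P}$.

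The genuine obstacle is a \emph{tag conflict}: a crescent $R \in P$ is claimed by two distinct points $t_{1i} \ne t_{2j'}$. To resolve it, I would refine $R$ using the metric. Pick a basis element $U \in {\cal B}$ with $t_{1i} \in U \subset O_r(t_{1i})$ for some $r < d(t_{1i}, t_{2j'})$. By Definition~\ref{part}(ii), partition $R$ into the crescent $R \cap U$ together with finitely many crescents $S_1, \dots, S_k$ covering $R \setminus U$. One checks $t_{1i} \in \overline{R \cap U}$ directly, and $t_{2j'} \in \overline{R \setminus U} = \bigcup_\ell \overline{S_\ell}$ by noting that $t_{2j'}$ lies in the exterior of $U$ (since $\overline{U} \subset \overline{O_r(t_{1i})}$ does not contain $t_{2j'}$), so any sufficiently small neighborhood of $t_{2j'}$ sits inside $U^c$ and still meets $R$; thus $t_{2j'} \in \overline{S_\ell}$ for some $\ell$. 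The two conflicting tags can therefore be placed on distinct sub-crescents of $R$. Iterating over the finitely many conflicts produces the refined tagged partition $\dot{P}$, which by construction satisfies $\dot{P}_1, \dot{P}_2 \sle \dot{P}$.

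The main difficulty is precisely the conflict-resolution step: merely picking $U \in {\cal B}$ with $t_{1i} \in U$ and $t_{2j'} \notin U$ does not suffice, since $t_{2j'}$ might then fail to lie in $\overline{R \setminus U}$. The radius condition $r < d(t_{1i}, t_{2j'})$ is what forces $t_{2j'}$ into the exterior of $U$ and makes the argument go through; this is where the metric, rather than merely topological, structure of $X$ is used.
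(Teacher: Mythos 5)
Your proof is correct and follows the same overall strategy as the paper's: take the common refinement $P_1\vee P_2$, assign each old tag to a cell of the refinement, and split any cell that is claimed by two distinct tags, using a small basis element around one of them together with partitionability of ${\bf C}_{\cal B}$. The one place where you genuinely diverge is the conflict-resolution step, and there your version is the more careful one. The paper sorts the crescents $R_{1i}\cap R_{2j}$ according to which of $t_{1i},t_{2j}$ lie \emph{in} $R_{1i}\cap R_{2j}$ itself, and in the conflict case only needs an open $O\in{\cal B}$ with $t_{1i}\in O$ and $t_{2j}\notin O$, because $t_{2j}$ then sits in $(R_{1i}\cap R_{2j})\setminus O$ outright and so lies in exactly one piece $S_{\ell_0}$ of its partition. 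Since tags are only required to lie in the \emph{closures} $\overline{R_{1i}}$, a tag sitting on the boundary of its crescent is not covered by that membership-based case analysis; your closure-based assignment $t_{1i}\in\overline{R_{1i}\cap R_{2j(i)}}$ handles it, and the metric condition $r<d(t_{1i},t_{2j'})$ is then exactly what is needed to force $t_{2j'}$ out of $\overline{U}$ and hence into $\overline{R\setminus U}=\bigcup_\ell\overline{S_\ell}$. Your observation that mere topological separation ($t_{2j'}\notin U$) would not by itself guarantee $t_{2j'}\in\overline{R\setminus U}$ is correct, and this is the point at which your argument is strictly tighter than the paper's. Note also that each cell of $P_1\vee P_2$ can be claimed by at most the two tags $t_{1i}$ and $t_{2j}$ (a tag from $\dot{P}_1$ only ever lands in a subset of its own $R_{1i}$, and likewise for $\dot{P}_2$), so your pairwise resolution really does terminate after finitely many independent splits.
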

\begin{proof}
Consider two tagged partitions of $C\in{\bf C}_{\cal B}$: \[\dot{P}_1=\{(R_{1i},t_{1i}):1\leq i\leq n\}\]\[\dot{P}_2=\{(R_{2j},t_{2j}):1\leq j\leq m\}.\] Let \[P=P_1\vee P_2=\{R_{1i}\cap R_{2j}:1\leq i\leq n,1\leq j\leq m\}.\]  For each component $R_{1i}\cap R_{2j}$ with $1\leq i\leq n,1\leq j\leq m$, we proceed as follows. We select a tag $s_{ij}$ for $R_{1i}\cap R_{2j}$ in the first three cases below, whereas for the fourth case $R_{1i}\cap R_{2j}$ is itself partitioned:
\begin{enumerate}
\item If $\{t_{1i},t_{2j}\}\cap R_{1i}\cap R_{2j}=\emptyset$, choose any $s_{ij}\in R_{1i}\cap R_{2j}$.
\item If $\{t_{1i},t_{2j}\}\cap R_{1i}\cap R_{2j}=t_{1i}$, take $s_{ij}:=t_{1i}$.
\item If $\{t_{1i},t_{2j}\}\cap R_{1i}\cap R_{2j}=t_{2j}$, take $s_{ij}:=t_{2j}$.
\item If $\{t_{1i},t_{2j}\}\cap (R_{1i}\cap R_{2j})=\{t_{1i},t_{2j}\}$, with $t_{1i}\neq t_{2j}$, then let $O\in {\cal B}$ with $t_{1i}\in O$ and $t_{2j}\notin O$. Since ${\bf C}_{\cal B}$ is partitionable, there exist a finite set of pairwise disjoint crescents $S_\ell\in {\bf C}_{\cal B}$ ($1\leq \ell\leq k$) such that
  \[(R_{1i}\cap R_{2j})\setminus O=\bigcup_{1\leq \ell\leq k}S_\ell.\] Then there exists a unique $\ell_0\in\{1,\ldots,k\}$ such that $t_{2j}\in S_{\ell_0}$. For $\ell\neq\ell_0$, select $s_{ij\ell}\in S_\ell$ and let $S_0=(R_{1i}\cap R_{2j})\cap O\in {\bf C}_{\cal B}$. Then $\{(S_0,t_{1i}),(S_{\ell_0},t_{2j})\}\cup\{(S_\ell,s_{ij\ell}):\ell\neq \ell_0\}$ is a tagged partition of $R_{1i}\cap R_{2j}$ whose set of tags contains $\{t_{1i},t_{2j}\}$. Taking the union of the tagged partitions obtained as such for all  $R_{1i}\cap R_{2j}$,  with $1\leq i\leq n,1\leq j\leq m$, we obtain a partition which refines $P_1\vee P_2$ and has a set of tags that includes the set $\{t_{1i},t_{2j}: 1\leq i\leq n,1\leq j\leq m\}$ as required.
\end{enumerate}

\end{proof}

\begin{definition} Let $\gamma_C:\overline{C}\to \R$ be a gauge on $\overline{C}$ for $C\in {\bf C}_{\cal B}$.
A pair $(\dot{P}_C,\gamma_C)$ with $\dot{P}_C\prec \gamma_C$ is called a {\em PG pair} for $C$. The set of PG pairs for $C$ is denoted by $\dot{{\cal P}}{\cal G}_C$. 
\end{definition}
We write  $(\dot{P},\gamma):=(\dot{P}_X,\gamma_X)$ and $\dot{{\cal P}}{\cal G}:=\dot{{\cal P}}{\cal G}_X$.
Next we define a partial order on PG pairs by putting $(\dot{P}_C,\gamma_C)\sle (\dot{P'_C},\gamma'_C)$ if $\dot{P}_C\sle 
\dot{P'}_C$ and $\gamma_C\geq \gamma'_C$.

\begin{lemma}\label{exist-tag}
Let $Q$ be a partition of $C\in {\bf C}_{\cal B}$ and let $T\subset \overline{C}$ be a finite set of points. Then, given any gauge $\gamma_C$ on $\overline{C}$, there exists a tagged partition $\dot{P}_C$ of $C$ such that: (i) $P$ refines $Q$, (ii) the set of tags of ${\dot{P}}_C$ contains $T$, and (iii) $\dot{P}_C\prec \gamma_C$.

\end{lemma}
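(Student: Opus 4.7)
The plan is to refine $Q$ by isolating each tag in $T$ inside a small crescent of the form $R\cap V_i$, and then to partition what remains by invoking Proposition~\ref{pg-exist}. Write $T=\{t_1,\ldots,t_k\}$. Since $X$ is a Hausdorff metric space and ${\cal B}$ is a basis of open sets, I would first choose $V_i\in {\cal B}$ for $1\leq i\leq k$ such that $t_i\in V_i$, $\overline{V_i}\subset O_{\gamma_C(t_i)}(t_i)$, and the $V_i$ are pairwise disjoint. To achieve this, pick radii $s_i$ smaller than both $\gamma_C(t_i)$ and half the minimum pairwise distance among the $t_j$'s, and then pick $V_i\in {\cal B}$ with $t_i\in V_i\subset O_{s_i}(t_i)$.

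Next, fix $R\in Q$ and put $J_R=\{i:t_i\in \overline{R}\}$. For each $i\in J_R$, set $R_i:=R\cap V_i$. Because $V_i=V_i\cap\emptyset^c\in {\bf C}_{\cal B}$ (the empty set lies in ${\cal B}$) and ${\bf C}_{\cal B}$ is closed under finite intersections, each $R_i\in {\bf C}_{\cal B}$. These $R_i$ are pairwise disjoint; since every open neighbourhood of $t_i\in\overline{R}$ meets $R$ and $V_i$ is such a neighbourhood, $t_i\in\overline{R_i}$; and $\overline{R_i}\subset\overline{V_i}\subset O_{\gamma_C(t_i)}(t_i)$, so each pair $(R_i,t_i)$ is $\gamma_C$-fine. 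By Proposition~\ref{induc} applied with $C=R$ and crescents $V_i$ for $i\in J_R$, the remainder $R\setminus\bigcup_{i\in J_R}V_i$ decomposes as a finite disjoint union $\bigcup_{\ell=1}^{r(R)}S^R_\ell$ of elements of ${\bf C}_{\cal B}$. Proposition~\ref{pg-exist}, applied with the gauge $\gamma_C$ restricted to $\overline{S^R_\ell}$, then supplies a $\gamma_C$-fine tagged partition of each $S^R_\ell$.

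Assembling these data over all $R\in Q$---the tagged crescents $(R_i,t_i)$ for $i\in J_R$ together with the $\gamma_C$-fine tagged partitions of the various $S^R_\ell$---yields a tagged partition $\dot{P}_C$ of $C$. Every crescent it contains lies inside some $R\in Q$, so $\dot{P}_C$ refines $Q$; its tag set contains $T$, because each $t_i\in T\subset\overline{C}=\bigcup_{R\in Q}\overline{R}$ lies in $\overline{R}$ for at least one $R\in Q$ and is then selected as the tag of the corresponding $R_i$; and $\dot{P}_C\prec \gamma_C$ holds by construction. The one genuine obstacle is the initial isolation step---choosing the $V_i$ simultaneously in ${\cal B}$, inside the respective $\gamma_C(t_i)$-balls, and pairwise disjoint---which is routine in a Hausdorff metric space equipped with an open basis; the rest is bookkeeping on top of Propositions~\ref{induc} and~\ref{pg-exist}.
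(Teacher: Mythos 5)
Your proof is correct and follows essentially the same strategy as the paper's: isolate each tag of $T$ in a small basis open set contained in the corresponding $\gamma_C$-ball, carve the remainder of each crescent of $Q$ into crescents via Proposition~\ref{induc}, and cover those remainders with $\gamma_C$-fine tagged partitions supplied by Proposition~\ref{pg-exist}. If anything, your version is slightly more careful than the paper's, since you make the isolating sets $V_i$ pairwise disjoint and intersect them with the crescents $R\in Q$, which is what actually guarantees that the result is a genuine partition of $C$ that refines $Q$.
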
\begin{proof}
  Assume $Q=\{R_i\in {\bf C}_{\cal B}:1\leq i\leq n\}$. Let $r>0$ be small enough such that $r<\min_{t\in T}\gamma_C(t)$. For each $t\in T$, let $B_t\in {\cal B}$ with $B_t\subset O_r(t)$ be an open set with $t\in B_t$. Put $O:=\bigcup_{t\in T} B_t$. By Proposition~\ref{induc}, for each $i$ with $1\leq i\leq n$, the set ${R_i}\setminus O$ is the disjoint union of elements in ${\bf C}_{\cal B}$. By Proposition~\ref{pg-exist}, each of these disjoint sets has a $\gamma$-fine tagged partition. By taking the union of these $\gamma$-fine partitions, we obtain a $\gamma$-fine partition, say, $\dot{P_i}=\{(S_{ij},s_j):1\leq j\leq i_j\}$ of ${R_i}\setminus O$. Then $\dot{P}=(\bigcup_{1\leq i\leq n}\dot{P_i})\cup \{(O_r(t),t):t\in T\}$ is a tagged partition of $C$ that satisfies the three required properties (i),(ii) and (iii). 
\end{proof}
We can now show that the set of PG pairs is a directed set.

\begin{proposition}\label{pg-directed}
The partial order $\dot{\cal P}{\cal G}_C$, for $C\in {\bf C}_{\cal B}$, is a directed set.
\end{proposition}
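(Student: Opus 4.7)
The plan is to combine the directedness of partitions (Proposition~\ref{partition-directed}) with the tag-insertion result of Lemma~\ref{exist-tag}, using the pointwise minimum of the two gauges. Given two PG pairs $(\dot{P}_1,\gamma_1)$ and $(\dot{P}_2,\gamma_2)$ for $C$, with
\[\dot{P}_1=\{(R_{1i},t_{1i}):1\leq i\leq n\},\qquad \dot{P}_2=\{(R_{2j},t_{2j}):1\leq j\leq m\},\]
I will exhibit a PG pair above both in the order $\sle$ on $\dot{\cal P}{\cal G}_C$.

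First, define $\gamma:\overline{C}\to\R$ by $\gamma(x)=\min\{\gamma_1(x),\gamma_2(x)\}$. Since both $\gamma_1$ and $\gamma_2$ are strictly positive, so is $\gamma$, hence $\gamma$ is a gauge on $\overline{C}$, and by construction $\gamma\leq \gamma_1$ and $\gamma\leq\gamma_2$. Next, form the partition $Q=P_1\vee P_2$, which is a refinement of both $P_1$ and $P_2$ by Proposition~\ref{partition-lub}, and set $T=\{t_{1i}:1\leq i\leq n\}\cup\{t_{2j}:1\leq j\leq m\}$, a finite subset of $\overline{C}$.

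Now apply Lemma~\ref{exist-tag} to the partition $Q$, the finite set $T$, and the gauge $\gamma$: there exists a tagged partition $\dot{P}$ of $C$ which refines $Q$, whose set of tags contains $T$, and which satisfies $\dot{P}\prec\gamma$. Since $\dot{P}$ refines $Q=P_1\vee P_2$ and the tag set contains those of $\dot{P}_1$ and $\dot{P}_2$, we have $\dot{P}_1\sle\dot{P}$ and $\dot{P}_2\sle\dot{P}$ in $\dot{\cal P}_C$. Combined with $\gamma\leq\gamma_1,\gamma_2$, this yields $(\dot{P}_1,\gamma_1)\sle(\dot{P},\gamma)$ and $(\dot{P}_2,\gamma_2)\sle(\dot{P},\gamma)$, so $(\dot{P},\gamma)$ is the required upper bound and $\dot{\cal P}{\cal G}_C$ is directed.

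I do not anticipate a serious obstacle here: the two nontrivial components (refining two partitions while preserving tags, and inserting prescribed tags inside a $\gamma$-fine refinement) have already been packaged as Proposition~\ref{partition-directed} and Lemma~\ref{exist-tag} respectively, and the only added ingredient is the trivial observation that the pointwise minimum of two gauges remains a gauge. The mildest subtlety is to be explicit that $\dot{P}\sle\dot{P}_1$ fails in general unless the tags of $\dot{P}_1$ appear among those of $\dot{P}$, which is precisely what Lemma~\ref{exist-tag}(ii) secures.
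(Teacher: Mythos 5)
Your proof is correct and follows essentially the same route as the paper: take $Q=P_1\vee P_2$, $\gamma=\min\{\gamma_1,\gamma_2\}$, and invoke Lemma~\ref{exist-tag} to produce a $\gamma$-fine tagged refinement whose tag set contains the tags of both $\dot{P}_1$ and $\dot{P}_2$. Your closing remark about why the tag-containment clause of Lemma~\ref{exist-tag} is needed for $\dot{P}_1\sle\dot{P}$ is a correct and worthwhile observation, but the argument itself matches the paper's.
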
\begin{proof}
Let $(\dot{P_1}, \gamma_1)$ and $(\dot{P_2}, \gamma_2)$ be two PG pairs for $C$ and assume $\dot{P_1}$ and $\dot{P_2}$ have set of tags $T_1$ and $T_2$ respectively. Let $Q=P_1\vee P_2$ and $\gamma=\min\{\gamma_1,\gamma_2\}$. By Lemma~\ref{exist-tag}, there exists a tagged partition $\dot{P}$ with set of tags $T$  such that $Q\sle P$, $T_1\cup T_2\subset T$ and $\dot{P}\prec \gamma$, i.e., $(\dot{P_1}, \gamma_1),(\dot{P_2}, \gamma_2)\sle (\dot{P}, \gamma)$.
\end{proof}

For each PG pair $(\dot{P},\gamma)$, with $\dot{P}=\{(R_i,t_i):1\leq i\leq n\}$, the normalised measure $\mu$ induces a simple valuation 
\[\mu_{\dot {P},\gamma}=\sum_{1\leq i\leq n} \mu(R_i)\delta_{t_i}\in\M^1(X)\]

\begin{theorem}~\label{port}{\bf Portmanteau}~\cite[p. 372]{Str11}
  Let $Y$ be a metric space and $\{\mu_k:k\in A\}$  a net in ${\bf M}^1(Y)$. Then the following conditions are equivalent for any $\mu\in  {\bf M}^1(Y)$.
  \begin{itemize}
  \item[(i)] $\lim_{k\in A}\mu_k=\mu$ in the weak topology of ${\bf M}^1(Y)$.
  \item[(ii)] For every open set $O\subset E$, we have: $\lim\inf_{k\in A}\geq \mu(O)$.
    \item[(iii)] For every bounded function $f$ that is continuous almost everywhere with respect to $\mu$, we have: $\lim_{k\in A} \int f\,d\mu_k=\int f \,d\mu$.

  \end{itemize}
  \end{theorem}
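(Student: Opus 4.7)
The plan is to prove the equivalences via the cycle (iii) $\Rightarrow$ (i) $\Rightarrow$ (ii) $\Rightarrow$ (iii). The first implication is immediate, since a bounded continuous function is continuous everywhere and in particular $\mu$-almost everywhere.

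For (i) $\Rightarrow$ (ii), I would approximate the indicator of an open set $O \subset Y$ from below by bounded continuous functions. A canonical choice is $f_n(y) := \min\{1, n\cdot d(y, O^c)\}$, which lies in $[0,1]$, vanishes off $O$, and satisfies $f_n \uparrow \mathbf{1}_O$ pointwise. Since $f_n \leq \mathbf{1}_O$, we have $\int f_n\,d\mu_k \leq \mu_k(O)$, so weak convergence yields
\[ \liminf_{k\in A} \mu_k(O) \geq \liminf_{k\in A} \int f_n\,d\mu_k = \int f_n\,d\mu, \]
and the conclusion follows by monotone convergence as $n \to \infty$.

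For (ii) $\Rightarrow$ (iii), I would first derive the dual statement $\limsup_{k} \mu_k(F) \leq \mu(F)$ for closed $F$ by taking complements and exploiting normalisation. Combining these bounds on a Borel set $A$ with $\mu(\partial A) = 0$ (a $\mu$-continuity set) gives $\lim_{k} \mu_k(A) = \mu(A)$. Now let $f$ be bounded with $|f| \leq M$ and continuous $\mu$-a.e. The set
\[ D := \{t \in \R : \mu(\{f = t\}) > 0\} \]
is countable, since the values $\mu(\{f = t\})$ for $t \in D$ sum to at most $\mu(Y) = 1$. Given $\epsilon > 0$, choose a subdivision $-M = t_0 < t_1 < \cdots < t_N = M$ with mesh less than $\epsilon$ and each $t_i \notin D$. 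The level sets $A_i := \{t_i \leq f < t_{i+1}\}$ then have boundaries contained in the union of $\{f = t_i\}$, $\{f = t_{i+1}\}$ and the set of discontinuities of $f$, all of which are $\mu$-null, so each $A_i$ is a $\mu$-continuity set. Setting $g := \sum_i t_i \mathbf{1}_{A_i}$ gives $\sup_y |f(y) - g(y)| \leq \epsilon$, hence
\[ \Bigl| \int f\,d\mu_k - \int f\,d\mu \Bigr| \leq 2\epsilon + \sum_{i} |t_i|\,|\mu_k(A_i) - \mu(A_i)|; \]
letting $k$ run along the net and then $\epsilon \to 0$ finishes the argument.

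The main obstacle is the last step, namely bridging the purely set-theoretic estimate in (ii) to integration against functions that may be genuinely discontinuous on a $\mu$-null set. The essential technical input is the countability of $D$, which provides enough slack to align the level-set partition of a simple-function approximation to $f$ with the $\mu$-continuity sets on which the set-theoretic convergence from (ii) is actually available; without this flexibility one cannot transfer the weak inequality on open sets into a genuine limit statement for $\int f\,d\mu_k$.
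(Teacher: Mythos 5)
The paper does not prove this theorem at all: it is quoted from Stroock's book with a citation, so there is no in-paper argument to compare yours against. Your proof is the standard textbook proof of the portmanteau/mapping theorem, adapted without difficulty to nets, and it is correct: the cycle (iii) $\Rightarrow$ (i) is trivial, (i) $\Rightarrow$ (ii) via the Lipschitz approximations $\min\{1,n\,d(\cdot,O^c)\}\uparrow\mathbf{1}_O$ is right, and the passage from (ii) to (iii) through $\mu$-continuity sets and the countability of $D=\{t:\mu(\{f=t\})>0\}$ is exactly the standard bridge. Two trivial points worth tightening: (a) with $t_0=-M$, $t_N=M$ and half-open cells $\{t_i\le f<t_{i+1}\}$, the points where $f=M$ are not covered, so take $t_0<-M\le M<t_N$ (both outside $D$) so that the $A_i$ genuinely partition $Y$ and $\sup|f-g|\le\epsilon$ holds everywhere; (b) you are implicitly assuming $f$ is Borel measurable so that $\int f\,d\mu_k$ is defined for every $k$ --- a function continuous $\mu$-a.e.\ need not be Borel, and this hypothesis is really missing from the statement itself rather than from your proof, but it should be acknowledged. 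Neither point is a gap in the mathematics.
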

  
Using Portmanteau theorem, we can now deduce the following:
\begin{theorem}\label{net-weak}
The net $\{\mu_{\dot{P},\gamma}:(\dot{P},\gamma)\in \dot{\cal P} {\cal G}\}\subset\M^1(X)$ converges in weak topology to $\mu$, i.e.,
\[\lim_{(\dot{P},\gamma)\in \dot{\cal P} {\cal G}} \mu_{\dot{P},\gamma}=\mu\]
\end{theorem}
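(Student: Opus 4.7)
My plan is to use the Portmanteau theorem (condition (iii) applied to bounded continuous functions) as the verification tool for weak convergence. So it suffices to show that for every bounded continuous $f:X\to\R$, the net of Riemann-type sums $\int f\, d\mu_{\dot{P},\gamma}=\sum_i \mu(R_i) f(t_i)$ converges to $\int f\, d\mu$ along the directed set $\dot{{\cal P}}{\cal G}$. Since $X$ is a compact metric space, any such $f$ is uniformly continuous, which is the crucial property I will exploit.

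Fix $\epsilon>0$ and a bounded continuous $f$. By uniform continuity, choose $\eta>0$ so that $d(x,y)<\eta$ implies $|f(x)-f(y)|<\epsilon$. Take as a base PG pair any $(\dot{P}_0,\gamma_0)$ where $\gamma_0$ is the constant gauge $\eta/2$; such a pair exists by Proposition~\ref{pg-exist}. For every PG pair $(\dot{P},\gamma)$ with $(\dot{P}_0,\gamma_0)\sle(\dot{P},\gamma)$, the ordering gives $\gamma\leq\gamma_0\equiv\eta/2$, and the $\gamma$-fineness condition $\overline{R_i}\subset O_{\gamma(t_i)}(t_i)$ forces $d(x,t_i)<\eta/2$ for every $x\in R_i$, whence $|f(x)-f(t_i)|<\epsilon$ on each $R_i$.

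The rest is a routine estimate: writing $\int f\, d\mu = \sum_i \int_{R_i} f\, d\mu$ using the partition property $X=\bigsqcup_i R_i$, we get
\[\left|\int f\, d\mu_{\dot{P},\gamma} - \int f\, d\mu\right| = \left|\sum_i \int_{R_i}(f(t_i)-f(x))\, d\mu(x)\right| \leq \sum_i \epsilon\,\mu(R_i) = \epsilon\,\mu(X) = \epsilon.\]
Since $\epsilon$ was arbitrary, $\lim_{(\dot{P},\gamma)}\int f\, d\mu_{\dot{P},\gamma}=\int f\, d\mu$ for each bounded continuous $f$, and Portmanteau's theorem (equivalence of (i) and (iii)) yields weak convergence $\mu_{\dot{P},\gamma}\to\mu$.

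There is no serious obstacle here; the only subtle point is the direction of the order on PG pairs (refining the partition goes with \emph{shrinking} the gauge), so I will briefly highlight why $(\dot{P}_0,\gamma_0)\sle(\dot{P},\gamma)$ indeed forces $\gamma\le\gamma_0$ pointwise and hence uniform control of the diameters $\mathrm{diam}(R_i)\le 2\gamma(t_i)\le \eta$. The normalisation $\mu(X)=1$ makes the final bound clean, but the argument works for any finite $\mu$ with $\epsilon/\mu(X)$ playing the role of the uniform-continuity tolerance.
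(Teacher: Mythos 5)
Your proof is correct, but it goes through the Portmanteau theorem by a different clause than the paper does. The paper verifies condition (ii): it fixes an arbitrary open set $O$, uses inner regularity to find $W$ with $\overline{W}\subset O$ and $\mu(W)>\mu(O)-\epsilon$, takes a constant gauge $\delta/2$ where $\overline{W}_\delta\subset O$, and argues that in any $\gamma$-fine tagged partition every crescent meeting $\overline{W}$ lies entirely in $O$, giving $\liminf \mu_{\dot{P},\gamma}(O)\geq\mu(O)$. You instead verify condition (iii) (equivalently, the definition of weak convergence) directly: uniform continuity of a bounded continuous $f$ on the compact space $X$, a constant gauge equal to half the modulus of continuity, and the standard Riemann-sum estimate $|\sum_i\int_{R_i}(f(t_i)-f(x))\,d\mu(x)|\leq\epsilon$. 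Both arguments are sound, and you handle the one genuinely delicate point correctly — the order on PG pairs reverses the gauge, so $(\dot{P}_0,\gamma_0)\sle(\dot{P},\gamma)$ does force $\gamma\leq\gamma_0$ and hence uniform control of the crescent diameters for all indices beyond the base pair. The trade-off: the paper's open-set argument is purely topological/measure-theoretic and needs no uniform continuity, while your test-function argument is more self-contained for a reader who thinks in Riemann sums and, as a bonus, already contains the full proof of the Corollary stated immediately after the theorem ($\lim\int f\,d\mu_{\dot{P},\gamma}=\int f\,d\mu$ for continuous $f$), which the paper instead derives afterwards by combining the theorem with Portmanteau again.
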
\begin{proof}
Let $O\subset X$ be any non-empty open set. Since $O$ is the union of increasing open sets $W$ with $\overline{W}\subset O$, for each $\epsilon>0$, there exists $W$ such that $\mu(W)>\mu(O)-\epsilon$ and $\overline{W}\subset O$. Let $\delta>0$ be such that $\overline{W}_\delta\subset O$ and consider the gauge $\gamma$ with $\gamma(x)=\delta/2$ for all $x\in X$. Take $\dot{P}\prec \gamma$. If $R$ is a crescent of $P$ and $R\cap O^c\neq \emptyset$, then $R\cap \overline{W}=\emptyset$. It follows that $\mu_{\dot{P},\gamma}(O)\geq \mu(W)>\mu(O)-\epsilon$ and this also holds for all $\mu_{\dot{P'},\gamma'}(O)$ with $(\dot{P},\gamma)\sle (\dot{P'},\gamma')$. Since $\epsilon>0$ is arbitrary, we get $\lim\inf \mu_{\dot{P},\gamma}(O)\geq \mu(O)$. The result now follows from Theorem~\ref{port}. 
\end{proof}

Since the relative Scott topology on the subset of maximal elements ${\bf M}^1(X) \subset {\bf P}^1{\bf U}(X)$ coincides with the weak topology, Theorem~\ref{net-weak} is the analogue of the convergence in the Scott topology to $\mu$ of the simple valuations $\nu\in {\bf P}^1{\bf U}X$ way below $\mu$:
\[\sup_{\nu\ll \mu} \nu=\mu.\]

We will further show below that, for integration of functions that are continuous on $X$, these two structures, namely the directed set of normalised simple valuations in ${\bf P}^1{\bf U}X$ way-below $\mu$ and the directed set of PG pairs of $X$, can be equally employed.  Let $f:X\to \R$ be a continuous function and consider a Borel measure $\mu$ on the compact metric space $X$. We show in the equivalence of (ii) and (iii) in the following theorem that the idea of a gauge arises naturally in the domain-theoretic derivation of the Lebesgue integral of $f$ using simple valuations way-below $\mu$.

\begin{theorem}\label{simple-val-gauge-eq}
  Given a real-valued continuous function $f$ on $X$,  the  following are equivalent for $r\in \R$:
  \begin{itemize}
  \item[(i)] The Lebesgue integral of $f$ with respect to the normalised Borel measure $\mu$ satisfies $\int f\,d\mu=r$.
  \item[(ii)] For each $\epsilon>0$, there exists a partition $P$ and $1>\alpha,\beta>0$ with $r\in \int f\mu_{P,\alpha,\beta}$ and $\mbox{diam}(\int f\mu_{P,\alpha,\beta})<\epsilon$.
  \item[(iii)] For each $\epsilon>0$, there exists a gauge $\gamma$ on $X$ such that $|\int f\,d\mu_{\dot{P},\gamma}-r|<\epsilon$ for any $\gamma$-tagged partition $\dot{P}$.
  \end{itemize}
  \end{theorem}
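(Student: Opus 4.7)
Throughout I would exploit that $f$, being continuous on the compact space $X$, is uniformly continuous: for each $\eta>0$ there is $\delta(\eta)>0$ such that $d(x,y)<\delta(\eta)$ implies $|f(x)-f(y)|<\eta$. Write $M=\max_Xf-\min_Xf$, and read the interval integral of $f$ against a simple valuation $\sum_ip_i\delta_{K_i}$ as $\sum_ip_i[\min_{K_i}f,\max_{K_i}f]$. My plan is to close the two cycles (i)$\Rightarrow$(ii)$\Rightarrow$(i) and (i)$\Rightarrow$(iii)$\Rightarrow$(i).

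For (i)$\Rightarrow$(ii), fix $\epsilon>0$. Choose $\beta\in(0,1)$ with $\beta M<\epsilon/2$, a partition $P$ with $\|P\|<\delta(\epsilon/4)/2$, and $0<\alpha<\delta(\epsilon/4)/2$; then each $K_i=(\overline{R_i})_\alpha$ has diameter below $\delta(\epsilon/4)$, so $\max_{K_i}f-\min_{K_i}f\leq\epsilon/4$. From the expansion
\[\int f\,\mu_{P,\alpha,\beta}=\beta[\min_Xf,\max_Xf]+(1-\beta)\sum_{i}\mu(R_i)[\min_{K_i}f,\max_{K_i}f],\]
the diameter is at most $\beta M+(1-\beta)(\epsilon/4)<\epsilon$. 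Containment of $r=\int f\,d\mu$ follows because $\overline{R_i}\subset K_i$ gives $\int_{R_i}f\,d\mu\in\mu(R_i)[\min_{K_i}f,\max_{K_i}f]$, hence $r\in\sum_i\mu(R_i)[\min_{K_i}f,\max_{K_i}f]$, and independently $r\in[\min_Xf,\max_Xf]$, so the convex split $r=\beta r+(1-\beta)r$ sits inside $\int f\,\mu_{P,\alpha,\beta}$. For (ii)$\Rightarrow$(i) the very same containment places $\int f\,d\mu$ inside $\int f\,\mu_{P,\alpha,\beta}$ for every admissible triple; the one supplied by (ii) forces $|r-\int f\,d\mu|<\epsilon$, and letting $\epsilon\to 0$ yields $\int f\,d\mu=r$.

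For (i)$\Rightarrow$(iii), given $\epsilon>0$ take the constant gauge $\gamma\equiv\delta(\epsilon)/2$. For any $\gamma$-fine tagged partition $\dot P=\{(R_i,t_i):1\leq i\leq n\}$, $\overline{R_i}\subset O_{\gamma(t_i)}(t_i)$ gives $|f(x)-f(t_i)|<\epsilon$ on $R_i$, whence
\[\Bigl|\int f\,d\mu-\int f\,d\mu_{\dot P,\gamma}\Bigr|\leq\sum_{i}\int_{R_i}|f-f(t_i)|\,d\mu\leq\epsilon.\]
For (iii)$\Rightarrow$(i), with the gauge $\gamma$ from (iii) and $\epsilon$ fixed, set $\gamma'(x)=\min\{\gamma(x),\delta(\epsilon)/2\}$, still a strictly positive gauge; Proposition~\ref{pg-exist} supplies a $\gamma'$-fine tagged partition $\dot P$, which is automatically $\gamma$-fine. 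The hypothesis yields $|\int f\,d\mu_{\dot P,\gamma'}-r|<\epsilon$, the uniform-continuity bound yields $|\int f\,d\mu-\int f\,d\mu_{\dot P,\gamma'}|\leq\epsilon$, and together $|\int f\,d\mu-r|<2\epsilon$, so (i) holds.

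The principal obstacle is the interval bookkeeping in (i)$\Leftrightarrow$(ii): one has to tune $P$, $\alpha$ and $\beta$ simultaneously so that $r$ sits inside $\int f\,\mu_{P,\alpha,\beta}$ \emph{and} the $\beta[\min_Xf,\max_Xf]$ perturbation does not destroy the diameter control. The (i)$\Leftrightarrow$(iii) half is essentially the classical fact that Riemann-type sums for continuous integrands on a compact space converge to the Lebesgue integral once the partition is sufficiently fine.
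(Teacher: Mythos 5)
Your argument is correct in substance, but it takes a genuinely different route from the paper. You prove (i)$\Leftrightarrow$(ii) and (i)$\Leftrightarrow$(iii) separately, using (i) as a hub and uniform continuity of $f$ as the only tool: for (i)$\Leftrightarrow$(ii) you exploit the always-valid containment $\int f\,d\mu\in\int f\,\mu_{P,\alpha,\beta}$ together with direct diameter control, and for (i)$\Leftrightarrow$(iii) the classical Riemann-sum estimate with a constant gauge. The paper instead imports (i)$\Leftrightarrow$(ii) wholesale from the R-integration results of Edalat's 1994 paper (Theorems 6.5 and 7.2, Propositions 4.6 and 4.8 there), and spends its effort on the \emph{direct} implications (ii)$\Rightarrow$(iii) and (iii)$\Rightarrow$(ii): it manufactures a gauge $\gamma$ from the geometry of a simple valuation $\mu_{P,\alpha}$ (via $d(x,\partial R_i)$ and the $\alpha$-expansions $K_i$) and uses the splitting lemma to show $\mu_{P,\alpha}\sle\mu_{P'}$ for every $\gamma$-fine $\dot{P'}$, and conversely builds $\mu_{P,\delta/2,\epsilon/6M}$ from a gauge. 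That direct correspondence is the conceptual payload of the theorem in context --- the text immediately afterwards stresses that gauges are \emph{derived from} simple valuations way-below $\mu$ --- and your hub-and-spoke proof, while logically sufficient for the stated equivalence and arguably more elementary and self-contained, does not exhibit it.

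Two small repairs you should make. First, with $\|P\|<\delta(\epsilon/4)/2$ and $\alpha<\delta(\epsilon/4)/2$ you get $\mathrm{diam}(K_i)\leq\|P\|+2\alpha<\tfrac{3}{2}\delta(\epsilon/4)$, not $<\delta(\epsilon/4)$; take $\alpha<\delta(\epsilon/4)/4$ (or apply uniform continuity with a three-term triangle inequality) so that the oscillation bound on $K_i$ actually holds. Second, you silently use that partitions of $X$ of arbitrarily small norm exist; this is true and is also used implicitly by the paper, but in your self-contained version it deserves a one-line appeal to Proposition~\ref{cover-crescent} applied to a cover of $X$ by basis elements of small diameter.
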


\begin{proof}

  (i)$\,\iff\,$(ii). By~\cite[Theorem 6.5]{Eda94}, $f$, being continuous, is R-integrable with respect to $\mu$, and, by~\cite[Theorem 7.2]{Eda94}, the Lebesgue integral of $f$ is equal to its R-integral. Recall, by Proposition~\ref{partition-way-below} that $\mu_{P,\alpha,\beta}\ll \mu$ for $P\in{\cal P}$ and $1>\alpha,\beta>0$. The equivalence of (i) and (ii) now follows since, by~\cite[Propositions 4.6 and 4.8]{Eda94}, $f$ is R-integrable iff (ii) holds. \\

  (ii)$\,\Rightarrow\,$(iii). Suppose $\mu_{P,\alpha,\beta}$, for some \[P=\{R_i:1\leq i\leq n\}\in{\cal P},\] and $1>\alpha,\beta>0$, satisfies (ii) for some $r\in \R$ and $\epsilon>0$. Since  $\mu_{P,\alpha,\beta}\sle \mu_{P,\alpha}$, it follows, by~\cite[Proposition 4.2]{Eda94}, that $S^\ell(f,\mu_{P,\alpha,\beta})\leq S^\ell(f,\mu_{P,\alpha})$ and 
  $S^u(f,\mu_{P,\alpha})\leq S^u(f,\mu_{P,\alpha,\beta})$. Hence, \begin{equation}\label{refine-P}\int f\,d\mu_{P,\alpha,\beta}\sle \int f\,d\mu_{P,\alpha}.\end{equation} We will now use the simple valuation $\mu_{P,\alpha}$ to define a gauge $\gamma$ on $X$.  Let $K_i=(\overline{R_i})_\alpha$. Then the collection $\{K^\circ_i:1\leq i\leq n\}$ gives an open cover of $X$, i.e., $X=\bigcup_{1\leq i\leq n}K^\circ_i$. For $x\in X$, let $\gamma(x)=d(x,\partial R_i)$ if $x\in R_i^\circ$ for some $i$ (with $1\leq i\leq n$), which would be unique since the crescents $R_i$'s are pairwise disjoint for $1\leq i\leq n$. Otherwise, if $x\notin\bigcup_{1\leq i\leq n}R^\circ_i$,  let \[\gamma(x):=\sup\{r>0: x\in K^\circ_i,\ C_r(x)\subset K_i,1\leq i\leq n\}.\]Then $\gamma(x)>0$ for all $x\in X$. Suppose now \[\dot{P'}=\{(R'_i,t_i):1\leq i\leq m\}\] is any $\gamma$-fine tagged partition of $X$. We claim that $\mu_{P,\alpha}\sle \mu_{P'}$. Recall that we have \[\mu_{P,\alpha}=\sum_{1\leq i\leq n} \mu(R_i)\delta_{K_i}\qquad\mu_{P'}=\sum_{1\leq j\leq m} \mu(R'_j)\delta_{\overline{R'_j}}\] where $K_i=(\overline{R_i})_\alpha$. Then, by construction of $\gamma$, for each $j$ with $1\leq j\leq m$, there exists $i$ with $1\leq i\leq n$ such that ${R'_j}\subset K_i$, and ${R'_j}\cap K_i\neq \emptyset$ implies  ${R'_j}\subset K_i$.  Define non-negative real numbers $t_{ij}$ for $1\leq i\leq n$ and $1\leq j\leq m$ as follows:
	\[t_{ij}=\mu(R_i\cap R'_j)\]
Then $t_{ij}$, for $1\leq i\leq n$ and $1\leq j\leq m$, satisfies 
\[\sum_{1\leq j\leq m}t_{ij}=\mu(R_i),\quad\sum_{1\leq i\leq n}t_{ij}=\mu(R'_j)\]
 and $t_{ij}\neq 0$ implies ${R'_j}\subset K_i$, which shows by the Splitting lemma for normalised simple valuations~\cite[Proposition 3.1]{Eda94} that $\mu_{P,\alpha}\sle \mu_{P'}$. It follows, by Equation~(\ref{refine-P}), that $\int f\,d\mu_{P,\alpha}\sle \int f\,d\mu_{P'}$ and thus $|\int f\,d\mu_{\dot{P'},\gamma}-r|<\epsilon$.\\

(iii)$\,\Rightarrow\,$(ii). Let $M>0$ be a bound for $f$, i.e., $|f(x)|\leq M$ for $x\in X$ and let $\epsilon>0$ be given. By assumption, there exists a gauge $\gamma$ on $X$ such that $|\int f\,d\mu_{\dot{P},\gamma}-r|<\epsilon/3$ for any tagged partition $\dot{P}\prec \gamma$. Since $f$ is continuous, and thus uniformly continuous, on the compact set $X$, there exists $\delta>0$ such that $|f(x)-f(y)|<\epsilon/3$ whenever $x,y\in X$ and $d(x,y)<\delta$. Let the  gauge $\gamma'$ be defined for $x\in X$ by: 
\[\gamma'(x)= \min\{\gamma(x),\delta/2\}.\]

Then $0<\gamma'(x)\leq \gamma(x)$ for $x\in X$. Suppose now \[\dot{P}=\{(R_i,x_i):1\leq i\leq n\}\] is any $\gamma'$-fine tagged partition, thus also $\gamma$-fine. We claim that $\mu_{P,\delta/2,\epsilon/6M}$ satisfies (ii).  It suffices to show that \[|S_\xi(f,\mu_{P,\delta/2,\epsilon/6M})-r|<\epsilon,\] for any selection of $\xi$ with $y_0:=\xi(X)\in X$ and $y_i:=\xi(R_i)\in (\overline{R_i})_{\delta/2}$ for $1\leq i\leq n$. Since $\dot{P}\prec \gamma'$, we have $\overline{R_i}\subset O_{\gamma'(x_i)}$ for $1\leq i\leq n$. From $y_i\in (\overline{R_i})_{\delta/2}$, it follows that $d(x_i,y_i)<\delta$ for $1\leq i\leq n$.
We have, for $\xi':=\xi\restriction P_{\mu,\delta/2}$:
\[\left|S_\xi(f,,\mu_{P,\delta/2,\epsilon/6M})-S_{\xi'}(f,\mu_{P,\delta/2})\right|\]\[\leq\left|\frac{\epsilon}{6M}\left(f(y_0)\mu(X)-\sum_{1\leq i\leq n}f(y_i)\mu(R_i)\right)\right|=\frac{\epsilon}{3},\]
since $\mu(X)=1$ and $f$ is bounded by $M$.

Next we estimate:
\[\left|S_{\xi'}(f,\mu_{P,\delta/2})-S(f,\dot{P},\mu)\right|=\]\[
\left|\sum_{1\leq i\leq n} (f(y_i)-f(x_i))\mu(R_i)\right|<\epsilon/3,\]
since $|f(y_i)-f(x_i)|<\epsilon/3$. Finally, since $\dot{P}\prec\gamma$ we have:
\[\left|\int f\,d\mu_{\dot{P},\gamma}-r\right|<\epsilon/3.\]

Putting the three estimates above together we obtain:

\[\left|S_\xi(f,\mu_{P,\delta/2,\epsilon/6M})-r\right|\]
\[\leq\left|S_\xi(f,\mu_{P,\delta/2,\epsilon/6M})-S_{\xi'}(f,\mu_{P,\delta/2})\right|\]
\[+\left|S_{\xi'}(f,\mu_{P,\delta/2})-S(f,\dot{P},\mu)\right|+\left|\int f\,d\mu_{\dot{P},\gamma}-r\right|\]
\[< \frac{\epsilon}{3}+\frac{\epsilon}{3}+\frac{\epsilon}{3}=\epsilon.\]
\end{proof}

We  note that, in the present case when $f$ is assumed continuous, in Theorem~\ref{simple-val-gauge-eq}, the existence of the gauges in (iii) for the Lebesgue integrability of $f$ with respect to the normalised measure $\mu$ is derived using the family of normalised simple valuations in (ii), which form a directed set of simple valuations way-below  $\mu$. More specifically, the gauge satisfying (iii) was constructed from a normalised simple valuation satisfying (ii) and vice versa. One can say that normalised simple valuations way-below $\mu$ played the role of first class objects from which gauges were derived.

Combining Portmanteau theorem with Theorem~\ref{net-weak}, we conclude:
\begin{corollary}
For every continuous function $f:X\to \R$,
we have: \[\lim_{(\dot{P},\gamma)\in \dot{\cal P} {\cal G}} \int f\,d\mu_{\dot{P},\gamma}=\int f\,d\mu\]
  \end{corollary}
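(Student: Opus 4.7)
The plan is to invoke the Portmanteau theorem (Theorem~\ref{port}) together with Theorem~\ref{net-weak}, essentially as a direct combination. Theorem~\ref{net-weak} already established that the net $\{\mu_{\dot{P},\gamma}:(\dot{P},\gamma)\in \dot{\cal P}{\cal G}\}$ in $\M^1(X)$ converges to $\mu$ in the weak topology. So all that remains is to check that any continuous $f:X\to\R$ meets the hypotheses of condition (iii) of Portmanteau.

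First I would observe that since $X$ is a compact metric space, any continuous $f:X\to\R$ is automatically bounded; this is needed because Portmanteau condition (iii) is stated for bounded functions. Second, a function continuous on all of $X$ is in particular continuous $\mu$-almost everywhere, so the ``continuous a.e.\ with respect to $\mu$'' hypothesis holds trivially. Hence $f$ falls into the class of test functions for which the equivalence (i)$\iff$(iii) of Portmanteau applies.

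Applying that equivalence to the net indexed by the directed set $(\dot{\cal P}{\cal G},\sle)$ (which is directed by Proposition~\ref{pg-directed}), we conclude
\[
\lim_{(\dot{P},\gamma)\in \dot{\cal P}{\cal G}} \int f\,d\mu_{\dot{P},\gamma}=\int f\,d\mu,
\]
which is the desired identity.

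There is no serious obstacle: the corollary is a one-line deduction once Theorem~\ref{net-weak} is in hand, and the only substantive content—weak convergence of the simple-valuation net induced by PG pairs—has already been carried out in the proof of Theorem~\ref{net-weak} via the open-set estimate $\mu_{\dot{P},\gamma}(O)\geq \mu(W)>\mu(O)-\epsilon$ using an interior $\delta$-thickening $\overline{W}_\delta\subset O$.
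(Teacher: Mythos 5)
Your proposal is correct and matches the paper's own derivation, which simply combines Theorem~\ref{net-weak} (weak convergence of the PG-pair net to $\mu$) with condition (iii) of the Portmanteau theorem. Your added remarks that continuity on compact $X$ gives boundedness and $\mu$-a.e.\ continuity are exactly the (implicit) checks needed.
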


\section{Generalised Henstock-Kurzwel Integration}\label{unbounded}
 
We now have a rich directed family of PG pairs which will play the same pivotal role for the integration of general functions as the directed set of normalised simple valuations way-below a given normalised Borel measure for the integration of continuous functions. We can now define the $D_\mu$-integrability of any function. 
\begin{definition}
We say $f:\overline{C}\to \R$ has a $D_\mu$-integral $\int_C f\,d\mu=r$ with respect to a normalised Borel measure $\mu$ if there is a sub-net $\dot{\cal P}{\cal G}_C(f)\subset \dot{\cal P}{\cal G}_C$ such that $\lim_{(\dot{P},\gamma)\in\dot{\cal P}{\cal G}_C(f)}\int f\,\mu_{\dot{P},\gamma}=r$. 
\end{definition}
Note that $f$ has to be defined on $\overline{C}$, required in the proof of Proposition~\ref{pg-exist}, for the integral $\int_C f\,d\mu$ to be defined. Since limits of nets are unique in Hausdorff spaces, the  integral, if it exists, is well-defined. We immediately have:

\begin{proposition}\label{HK}  $f:\overline{C}\to \R$, for $C\in {\bf C}_{\cal B}$, is $D_\mu$-integrable with respect to $\mu$ on $C$ with value $r$  iff for all $\epsilon>0$ there exists a gauge $\gamma$  on $\overline{C}$ such that for any tagged partition $\dot{P}\prec \gamma$ we have: $|\int_C f\,\mu_{\dot{P},\gamma}-r|<\epsilon$ .
\end{proposition}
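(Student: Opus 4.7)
My plan is to unpack the definition of the $D_\mu$-integral (convergence of a sub-net of the family $\{\int f\,d\mu_{\dot{P},\gamma}\}$ indexed by $\dot{\cal P}{\cal G}_C$) and match it against the stated $\epsilon$-$\gamma$ characterisation, using Proposition~\ref{pg-exist} (every gauge admits a $\gamma$-fine tagged partition) and Proposition~\ref{pg-directed} (the poset $(\dot{\cal P}{\cal G}_C,\sle)$ is directed, with $(\dot{P},\gamma)\sle(\dot{P}',\gamma')$ meaning $\dot{P}\sle\dot{P}'$ and $\gamma\ge\gamma'$). I will note at the outset that, from the definition $\mu_{\dot{P},\gamma}=\sum_{i}\mu(R_i)\delta_{t_i}$, the simple valuation depends only on the tagged partition $\dot{P}$, a fact I will use tacitly below.

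For the ``($\Leftarrow$)'' direction I will actually prove the stronger statement that the whole net converges to $r$, so that every sub-net does too. Given $\epsilon>0$, take a gauge $\gamma_\epsilon$ satisfying the $\epsilon$-$\gamma$ condition and, via Proposition~\ref{pg-exist}, fix some $\gamma_\epsilon$-fine tagged partition $\dot{P}_\epsilon$. For any $(\dot{P},\gamma)\sge(\dot{P}_\epsilon,\gamma_\epsilon)$ the ordering forces $\gamma\le\gamma_\epsilon$ pointwise on $\overline{C}$, so from $\dot{P}\prec\gamma$ and the inclusion $O_{\gamma(t_i)}(t_i)\subset O_{\gamma_\epsilon(t_i)}(t_i)$ one gets $\dot{P}\prec\gamma_\epsilon$, at which point the hypothesis closes the estimate.

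For the ``($\Rightarrow$)'' direction, given $\epsilon>0$, convergence of the sub-net $\dot{\cal P}{\cal G}_C(f)$ to $r$ supplies a PG pair $(\dot{P}_0,\gamma_0)$ in the sub-net such that $|\int f\,d\mu_{\dot{P},\gamma}-r|<\epsilon$ for all later $(\dot{P},\gamma)$ in the sub-net. I would then argue that $\gamma_0$ itself witnesses the $\epsilon$-$\gamma$ condition: for an arbitrary $\dot{P}\prec\gamma_0$ the pair $(\dot{P},\gamma_0)$ lies in $\dot{\cal P}{\cal G}_C$, and by directedness (Proposition~\ref{pg-directed}) together with cofinality of the sub-net one locates a PG pair in the sub-net above both $(\dot{P},\gamma_0)$ and $(\dot{P}_0,\gamma_0)$ to which the bound applies. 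The main obstacle, which I expect to have to handle with care, is the transfer of this estimate from such a refinement back to $\dot{P}$ itself, since Riemann sums are not preserved under refinement of tagged partitions. I anticipate that this step either exploits the freedom in the definition to enlarge the sub-net so that it contains $(\dot{P},\gamma_0)$ directly --- the reading of ``sub-net'' under which the author's ``we immediately have'' is a mere rewording of the definition --- or else requires an additional estimate on the discrepancy $|\int f\,d\mu_{\dot{P}''}-\int f\,d\mu_{\dot{P}}|$ obtained by further shrinking the gauge $\gamma_0$.
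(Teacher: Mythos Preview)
The paper offers no proof at all: it literally writes ``We immediately have:'' before Proposition~\ref{HK} and moves on. Your $(\Leftarrow)$ direction is correct and already more than the paper supplies; the observation that the $\epsilon$--$\gamma$ condition forces convergence of the \emph{entire} net $\{\int f\,d\mu_{\dot P,\gamma}\}_{(\dot P,\gamma)\in\dot{\cal P}{\cal G}_C}$ (hence of any sub-net) is exactly the right argument.

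Your caution about the $(\Rightarrow)$ direction is well placed and in fact exposes a looseness in the paper itself. As written, the definition asks only that \emph{some} sub-net converge to $r$, while the $\epsilon$--$\gamma$ condition is equivalent to convergence of the full net; these are not the same in general, and the tag-containment clause in the order on $\dot{\cal P}_C$ blocks the naive transfer you sketch. Your option~(a) is almost certainly the intended reading: immediately after the definition the author asserts that the integral ``if it exists, is well-defined'' by uniqueness of net limits in Hausdorff spaces, a claim that would fail if genuinely distinct sub-nets were allowed (two different sub-nets of a non-convergent net can have different limits). So the author is tacitly treating the definition as ``the full net converges to $r$'', under which Proposition~\ref{HK} really is an immediate restatement. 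Your option~(b), attempting an honest estimate $|\int f\,d\mu_{\dot P''}-\int f\,d\mu_{\dot P}|$ under refinement, would not succeed without already assuming something equivalent to the $\epsilon$--$\gamma$ condition, so it is a dead end here.
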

As usual, we write: $\int f\,d\mu:=\int_Xf\,d\mu$ when the latter exists. 
If we let $X=[0,1]$ with $\mu$ the Lebesgue measure on $[0,1]$, then Proposition~\ref{HK} shows that the $D_\mu$-integral and the Henstock-Kurzweil integral coincide. 

We now derive some basic properties of the  $D_\mu$-integral.  In~\cite{LZ96}, it is shown using the notion of contractions of intervals that the classical HK integral can be obtained with lower and upper sums as in the case of the Riemann integral. Here, we show similarly that the more general $D_\mu$-integral can be obtained from lower and upper sums by simply using the standard gauge. For any $f:X\to \R$, the lower and upper sums are defined as extended real numbers:

\[S^\ell(f,\gamma,\mu)=\inf \left\{\int f\,d\mu_{\dot{P},\gamma}: \dot{P}\prec \gamma\right\}\]
\[S^u(f,\gamma,\mu)=\sup \left\{\int f\,d\mu_{\dot{P},\gamma}: \dot{P}\prec \gamma\right\}\]
Note that for $\gamma_1\leq \gamma_2$ the relation $\dot{P}\prec \gamma_1$ implies $\dot{P}\prec \gamma_2$ and hence $S^\ell(f,\gamma_1,\mu)\geq S^\ell(f,\gamma_2,\mu)$ and $S^u(f,\gamma_1,\mu)\leq S^u(f,\gamma_2,\mu)$. We can now define the lower and upper integrals of $f$ with respect to $g$:
\[L\int_a^bf\,d\mu:=\lim_{\gamma}S^\ell(f,\gamma,\mu)=\sup_{\gamma}S^\ell(f,\gamma,\mu)\]
\[U\int_a^bf\,d\mu:=\lim_{\gamma}S^u(f,\gamma,\mu)=\inf_{\gamma}S^u(f,\gamma,\mu)\]
 as extended real numbers. By definition of liminf and limsup we have:
\begin{corollary}
\[L\int f\,d\mu=\liminf_{{\dot P}\prec\gamma}\int f\,d\mu_{\dot{P},\gamma}\]
\[U\int f\,d\mu=\limsup_{{\dot P}\prec \gamma} \int f\,d\mu_{\dot{P},\gamma}\]

\end{corollary}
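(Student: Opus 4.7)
The plan is essentially to unwrap the definitions, since the corollary is flagged as an immediate consequence of how $\liminf$ and $\limsup$ are being used. I would interpret the shorthand $\liminf_{\dot P \prec \gamma}$ as the liminf over the directed set of gauges on $X$ ordered by reverse pointwise inequality (it is directed because $\min(\gamma_1,\gamma_2)$ is again a gauge dominated by both), where at each $\gamma$ the relevant quantity is the set of values $\int f\,d\mu_{\dot P,\gamma}$ ranging over $\dot P \prec \gamma$. Proposition~\ref{pg-exist} ensures these sets are non-empty, so all of the quantities below are well defined in $[-\infty,+\infty]$.

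First, I would apply the standard definition of the liminf for such a net, giving
\[\liminf_{\dot P \prec \gamma}\int f\,d\mu_{\dot P,\gamma}
\;=\; \sup_{\gamma}\inf_{\dot P \prec \gamma}\int f\,d\mu_{\dot P,\gamma}
\;=\; \sup_{\gamma} S^\ell(f,\gamma,\mu),\]
where the second equality is literally the definition of $S^\ell(f,\gamma,\mu)$. The right-hand expression is $L\int f\,d\mu$ by the displayed definition preceding the corollary, which is the identity to be proven. The symmetric computation with $\sup$ in place of $\inf$ handles the $U$/$\limsup$ identity.

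To legitimately replace the sup by a directed limit, I would verify the monotonicity already observed in the excerpt: if $\gamma_1\leq\gamma_2$ pointwise then every $\gamma_1$-fine tagged partition is also $\gamma_2$-fine, hence $S^\ell(f,\gamma_1,\mu)\geq S^\ell(f,\gamma_2,\mu)$; so the function $\gamma\mapsto S^\ell(f,\gamma,\mu)$ is monotone along the directed set of gauges under reverse pointwise order, which gives $\sup_\gamma S^\ell=\lim_\gamma S^\ell$, matching the defining equation for $L\int$. The reverse monotonicity $S^u(f,\gamma_1,\mu)\leq S^u(f,\gamma_2,\mu)$ yields $\inf_\gamma S^u=\lim_\gamma S^u$ for the $U$-integral.

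I do not expect any real obstacle; the corollary is a definitional unfolding, and the only non-trivial ingredient invoked is Proposition~\ref{pg-exist} to guarantee that the inner extrema range over non-empty collections. The one point worth a sentence in the write-up is the clarification of what $\liminf_{\dot P \prec \gamma}$ means — interpreting it, as above, as a liminf over gauges with an inner infimum over compatible tagged partitions, rather than as a liminf over the full directed set $\dot{\cal P}{\cal G}$ of PG pairs (the two can be shown to coincide, but the gauge-indexed reading makes the identification with $L\int$ completely transparent).
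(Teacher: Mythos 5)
Your proposal is correct and matches the paper's intent exactly: the paper offers no explicit proof beyond the phrase ``by definition of liminf and limsup,'' and your unwrapping of $\liminf_{\dot P\prec\gamma}$ as $\sup_\gamma\inf_{\dot P\prec\gamma}$ together with the monotonicity of $S^\ell$ and $S^u$ along the directed set of gauges is precisely the definitional identification being asserted. Your closing remark about the two possible readings of the index set (gauges versus PG pairs) is a worthwhile clarification, but it does not change the substance of the argument.
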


We now obtain a new equivalent definition for the $D_\mu$-integrability of $f$.
\begin{corollary}\label{lower-upper-integ}
A function $f:X\to \R_\bot$ is $D_\mu$-integrable iff $L\int f\,d\mu=U\int f\,d\mu$.

\end{corollary}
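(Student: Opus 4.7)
The plan is to reduce the corollary to Proposition~\ref{HK} and run a Darboux-style squeeze argument, analogous to the classical equivalence between Riemann integrability and equality of upper and lower Darboux integrals. The key monotonicity ingredient was already noted just before the definition of $L\int$ and $U\int$: if $\gamma_1 \leq \gamma_2$ pointwise, every $\gamma_1$-fine tagged partition is $\gamma_2$-fine, so $S^\ell(f,\gamma_1,\mu) \geq S^\ell(f,\gamma_2,\mu)$ and $S^u(f,\gamma_1,\mu) \leq S^u(f,\gamma_2,\mu)$. I will use this in both directions to combine gauges and to pass between values at a single gauge and the extended-real $L\int$, $U\int$.

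For the forward implication, I would suppose $f$ is $D_\mu$-integrable with value $r\in\R$. Given $\epsilon>0$, Proposition~\ref{HK} supplies a gauge $\gamma$ for which every $\dot P \prec \gamma$ satisfies $|\int f\,d\mu_{\dot P,\gamma}-r|<\epsilon$. Taking infimum and supremum over such $\dot P$ pinches both $S^\ell(f,\gamma,\mu)$ and $S^u(f,\gamma,\mu)$ into $[r-\epsilon,r+\epsilon]$, and since $L\int f\,d\mu$ is a supremum over $\gamma$ while $U\int f\,d\mu$ is an infimum over $\gamma$, this gives $r-\epsilon \leq L\int f\,d\mu \leq U\int f\,d\mu \leq r+\epsilon$. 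Letting $\epsilon\to 0$ yields $L\int f\,d\mu = U\int f\,d\mu = r$.

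For the converse, I would assume $L\int f\,d\mu = U\int f\,d\mu = r$ and verify the $\epsilon$-$\gamma$ criterion of Proposition~\ref{HK}. Unfolding the definitions as a supremum and infimum over gauges, for any $\epsilon>0$ there exist gauges $\gamma_1,\gamma_2$ with $S^\ell(f,\gamma_1,\mu) > r-\epsilon$ and $S^u(f,\gamma_2,\mu) < r+\epsilon$. Setting $\gamma := \min\{\gamma_1,\gamma_2\}$, which is still strictly positive and hence a gauge, the monotonicity above transfers both bounds to $\gamma$, so every $\dot P \prec \gamma$ satisfies $r-\epsilon < \int f\,d\mu_{\dot P,\gamma} < r+\epsilon$. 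Proposition~\ref{HK} then gives $\int f\,d\mu = r$. The only subtlety I anticipate is that $D_\mu$-integrability is defined with $r\in\R$, so before running the converse I should observe that if the common value of $L\int f\,d\mu$ and $U\int f\,d\mu$ is $\pm\infty$ then no finite $r$ witnesses the $\epsilon$-criterion and the equivalence is consistently vacuous; beyond this bookkeeping, the argument is routine, the substantive content being already packaged in Propositions~\ref{pg-directed} and~\ref{HK}.
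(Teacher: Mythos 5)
Your proof is correct and follows essentially the same route as the paper: in one direction the integrability gauge pinches both $S^\ell(f,\gamma,\mu)$ and $S^u(f,\gamma,\mu)$ into $[r-\epsilon,r+\epsilon]$, and in the other direction two near-optimal gauges are combined via $\gamma=\min\{\gamma_1,\gamma_2\}$ using the monotonicity of the lower and upper sums. Your explicit treatment of the gauge minimum and of the $\pm\infty$ edge case is, if anything, slightly more careful than the paper's own argument.
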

\begin{proof} Suppose $f$ is $\mu$-integrable to $r$. Let $\epsilon>0$ be given and let $\gamma$ be a gauge such that $\dot{P}\prec \gamma$ implies $|\int f\,d\mu_{\dot{P},\gamma}-r|< \epsilon$. We obtain $\liminf_{\dot P\prec \gamma}\int f\,d\mu_{\dot{P},\gamma}=\limsup_{\dot P\prec \gamma}\int f\,d\mu_{\dot{P},\gamma}=r$. Suppose on the other hand $\liminf_{\dot{P}\prec \gamma}S(f,\dot{P},\gamma,g)=r$ and $\epsilon>0$ is given. Then $\sup_{\gamma}S^\ell(f,\gamma,\mu)=\inf_{\gamma}S^u(f,\gamma,\mu)=r$ and hence there exists a gauge $\gamma$ with $|S^\ell(f,\gamma,g)-r|\leq \epsilon$ and  $|S^u(f,\gamma,g)-r|\leq \epsilon$ which implies $|\int f\,d\mu_{\dot{P},\gamma}-r|< \epsilon$ for all $\dot{P}\prec \gamma$.

\end{proof}

\begin{proposition}\label{basic-int-prop}
\begin{itemize}
\item[(i)] If $f:X\to \R$ is zero almost everywhere with respect to $\mu$ then $\int f\,d\mu=0$.
\item[(ii)] If $f_1$ and $f_2$ are $D_\mu$-integrable then so is $f_1+f_2$ with $\int (f_1+f_2)\,d\mu=\int f_1\,d\mu+\int f_2\,d\mu$.
\item[(iii)] If $c\in \R$ and $f$ is $D_\mu$-integrable then so is $cf$ with $\int cf\,d\mu=c\int f\,d\mu$.
  \item[(iv)] If $f\geq 0$ is $D_\mu$-integrable then $\int f\,d\mu\geq 0$. 
\end{itemize}
\end{proposition}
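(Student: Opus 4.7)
The plan is to reduce (ii), (iii), (iv) to routine manipulations with the gauge characterisation provided by Proposition~\ref{HK}, leaving (i) as the main obstacle, which calls for a Henstock-style gauge construction exploiting the outer regularity of Borel measures on compact metric spaces.

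For (ii), given $\epsilon>0$ I would pick gauges $\gamma_j$ ($j=1,2$) witnessing $|\int f_j\,d\mu_{\dot{P},\gamma_j}-r_j|<\epsilon/2$ for all $\dot{P}\prec \gamma_j$, and set $\gamma=\min\{\gamma_1,\gamma_2\}$. Any $\dot{P}\prec\gamma$ is simultaneously $\gamma_1$- and $\gamma_2$-fine, and since the Riemann-type sum splits additively as $\int(f_1+f_2)\,d\mu_{\dot{P},\gamma}=\int f_1\,d\mu_{\dot{P},\gamma}+\int f_2\,d\mu_{\dot{P},\gamma}$, the triangle inequality closes the argument. For (iii), if $c=0$ every Riemann-type sum vanishes; if $c\neq 0$, pick a gauge with tolerance $\epsilon/|c|$ for $f$ and factor $c$ out of the sum. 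For (iv), each sum $\sum_i f(t_i)\mu(R_i)$ is non-negative when $f\geq 0$, so the limit $\int f\,d\mu$ is non-negative.

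The main difficulty is (i). Let $N=\{x\in X:f(x)\neq 0\}$, so that $\mu(N)=0$. The delicacy is that a tag $t_i$ may still lie in $N$ with $f(t_i)$ arbitrarily large, so the gauge must force the corresponding $R_i$ into an open set of very small $\mu$-mass, with a quantitative trade-off against $|f(t_i)|$. My plan is the standard Henstock stratification $N_k=\{x:k-1<|f(x)|\leq k\}$ for $k\geq 1$, each of which has $\mu(N_k)=0$. Since $\mu$ is a finite Borel measure on a compact (hence Polish) metric space, outer regularity provides, for each $k$, an open set $U_k\supset N_k$ with $\mu(U_k)<\epsilon/(k\cdot 2^k)$. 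I then define a gauge $\gamma$ on $X$ by choosing $\gamma(x)>0$ small enough that $O_{\gamma(x)}(x)\subset U_k$ whenever $x\in N_k$, and setting $\gamma(x)=1$ when $f(x)=0$.

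For any $\gamma$-fine tagged partition $\dot{P}=\{(R_i,t_i):1\leq i\leq n\}$, indices with $f(t_i)=0$ contribute nothing; for the remaining indices, if $t_i\in N_k$ then $\overline{R_i}\subset O_{\gamma(t_i)}(t_i)\subset U_k$. Since the $R_i$'s are pairwise disjoint, this yields
\[\left|\int f\,d\mu_{\dot{P},\gamma}\right|\leq \sum_{k\geq 1}\sum_{t_i\in N_k}|f(t_i)|\,\mu(R_i)\leq \sum_{k\geq 1} k\,\mu(U_k)<\sum_{k\geq 1}\epsilon/2^k=\epsilon,\]
and Proposition~\ref{HK} then delivers $\int f\,d\mu=0$.
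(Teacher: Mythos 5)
Your proof is correct and follows essentially the same route as the paper: parts (ii)--(iv) are handled by the same gauge-minimum and sum-splitting manipulations, and part (i) uses the same stratification of the null set by the size of $|f|$ together with a gauge forcing each tagged crescent into an open set of mass at most $\epsilon/(k2^k)$. The only cosmetic difference is that you take a single open superset $U_k\supset N_k$ from outer regularity where the paper covers each stratum by a countable family of open balls of small total mass, which amounts to the same estimate.
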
\begin{proof}
(i) Let $\epsilon>0$ be given and let $D$ be the null set where $f$ is non-zero. Then $D=\bigcup_{n\geq 1}D_n$ where $D_n=\{x: n> |f(x)|\geq n-1\}$ are disjoint sets and $\mu(D_n)=0$ for $n\geq 1$. Then there exist  open balls $W_{nm}$, for $m\geq 1$, with $\sum_{m\geq 1}\mu(W_{nm})<\epsilon/n2^{n}$ such that $D_n\subset \bigcup_{m\geq 1}W_{nm}$. We now define a gauge $\gamma$ on $X$. If $x\notin D$ let $\gamma(x)=1$. If $x\in D$, then there exists a unique positive integer $n_x$ such that $x\in D_{n_x}$; take the least integer $m_x\geq 1$ such that $x\in W_{n_xm_x}$. Take $\gamma(x)>0$ small enough such that $O_{\gamma(x)}(x)\subset W_{n_xm_x}$. If $\dot{P}\prec \gamma$, then, since $|f|$ is bounded by $n$ in $D_n$, we obtain: \[\left|\int f\,d\mu_{\dot{P},\gamma}\right|< \sum_{n\geq 1} n\epsilon/(n2^n)=\epsilon.\]

(ii) Let $\epsilon>0$ be given. By $D_\mu$-integrability of $f_1$ and $f_2$, say to values $r_1$ and $r_2$ respectively, there exist gauges $\gamma_1$ and $\gamma_2$ such that for all tagged partitions $\dot{P_i}\prec \gamma_i$ with $i=1,2$, we have $|\int f_i\,d\mu_{\dot{P_i},\gamma_i}-r_i|<\epsilon/2$ for $i=1,2$. Put $\gamma=\min\{\gamma_1,\gamma_2\}$. If $\dot{P}\prec \gamma$, then 
\[\left|\int (f_1+f_2)\,d\mu_{\dot{P},\gamma}-r_1+r_2\right|\]\[\leq \left|\int f_1\,d\mu_{\dot{P_1},\gamma_1}-r_1\right|+\left|\int f_2\,d\mu_{\dot{P_2},\gamma_2}-r_2\right|<\epsilon/2+\epsilon/2=\epsilon.\]

(iii)-(iv) Straightforward.
\end{proof}

We have the usual {\em Cauchy condition} for  $D_\mu$-integrability, whose straightforward proof is skipped:

\begin{proposition}\label{cauchy}
A function $f:\overline{C}\to \R$, for $C\in {\bf C}_{\cal B}$, is $D_\mu$-integrable iff it satisfies the Cauchy condition: for each $\epsilon>0$ there exists a gauge $\gamma$ on $\overline{C}$ such that for any two PG pairs $(\dot{P_1},\gamma),(\dot{P_2},\gamma)\in \dot{{\cal P}}{\cal G}_C$  we have $|\int f\mu_{\dot P_1,\gamma}-
\int f\mu_{\dot P_2,\gamma}|<\epsilon$. 
\end{proposition}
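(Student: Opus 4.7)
The plan is to give a standard Cauchy-criterion argument, using Proposition~\ref{HK} for the forward direction and constructing a Cauchy sequence of Riemann-like sums for the reverse direction. Note first that for a PG pair $(\dot{P},\gamma)$ with $\dot{P}=\{(R_i,t_i):1\leq i\leq n\}$, the simple valuation $\mu_{\dot{P},\gamma}=\sum_i \mu(R_i)\delta_{t_i}$ depends only on $\dot{P}$, so $\int f\,d\mu_{\dot{P},\gamma}=\sum_i f(t_i)\mu(R_i)$ is independent of $\gamma$; this will be used tacitly to compare integrals for different gauges.

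For the forward direction, assume $f$ is $D_\mu$-integrable on $C$ with value $r$. Given $\epsilon>0$, apply Proposition~\ref{HK} with $\epsilon/2$ to obtain a gauge $\gamma$ such that $|\int f\,d\mu_{\dot{P},\gamma}-r|<\epsilon/2$ for every $\dot{P}\prec\gamma$. The triangle inequality then yields $|\int f\,d\mu_{\dot{P_1},\gamma}-\int f\,d\mu_{\dot{P_2},\gamma}|<\epsilon$ for any two PG pairs sharing the gauge $\gamma$.

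For the reverse direction, assume the Cauchy condition. For each $n\geq 1$ choose a gauge $\widetilde{\gamma}_n$ witnessing the condition for $\epsilon=1/n$, and set $\gamma_n(x):=\min\{\widetilde{\gamma}_1(x),\ldots,\widetilde{\gamma}_n(x)\}$, which is a gauge with $\gamma_{n+1}\leq\gamma_n$ so that $\dot{P}\prec\gamma_n$ implies $\dot{P}\prec\widetilde{\gamma}_n$. By Proposition~\ref{pg-exist}, pick a tagged partition $\dot{P}_n\prec\gamma_n$ for each $n$, and let $a_n:=\int f\,d\mu_{\dot{P}_n,\gamma_n}$. For $m\geq n$, both $\dot{P}_n$ and $\dot{P}_m$ are $\widetilde{\gamma}_n$-fine, so $|a_n-a_m|<1/n$ by the Cauchy hypothesis; hence $(a_n)$ converges in $\R$ to some $r$. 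To conclude, given $\epsilon>0$, pick $n$ with $1/n<\epsilon/2$ and $|a_n-r|<\epsilon/2$; then for any $\dot{P}\prec\gamma_n$, applying the Cauchy condition to the pair $(\dot{P},\gamma_n)$ and $(\dot{P}_n,\gamma_n)$ (both refine $\widetilde{\gamma}_n$) gives
\[\left|\int f\,d\mu_{\dot{P},\gamma_n}-r\right|\leq\left|\int f\,d\mu_{\dot{P},\gamma_n}-a_n\right|+|a_n-r|<\frac{1}{n}+\frac{\epsilon}{2}<\epsilon,\]
so $f$ is $D_\mu$-integrable on $C$ with value $r$ by Proposition~\ref{HK}.

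There is no serious obstacle here: the only subtle point is ensuring that a decreasing sequence of gauges can be extracted, which is handled by the pointwise minimum, and that Proposition~\ref{pg-exist} guarantees we can actually realise each $\gamma_n$ by some $\gamma_n$-fine tagged partition to extract the Cauchy sequence $(a_n)$. The argument is the exact analogue of the classical Cauchy criterion for Henstock-Kurzweil integration, transported to the present setting via the directedness of $\dot{\cal P}{\cal G}_C$ established in Proposition~\ref{pg-directed}.
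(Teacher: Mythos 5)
Your proof is correct and is precisely the standard Cauchy-criterion argument (gauge halving for the forward direction; nested minima of witnessing gauges plus Proposition~\ref{pg-exist} to extract a convergent sequence of sums for the converse), which is evidently the ``straightforward proof'' the paper explicitly skips. The one point worth making explicit --- that $\int f\,d\mu_{\dot{P},\gamma}$ depends only on $\dot{P}$ and not on $\gamma$, so sums for different gauges are comparable --- you do state, so nothing is missing.
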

Using the Cauchy condition, we can now show that the  $D_\mu$-integral is additive.
\begin{proposition}\label{restrict-int}
  \begin{itemize}
  \item[(i)] If $R,C\in {\bf C}_{\cal B}$ with $C\subset R$ and $f:R\to \R$ is $D_\mu$-integrable then so is the restriction $f:{C}\to \R$.
  \item[(ii)] Suppose $P=\{R_i: 1\leq i\leq n\}$ is a partition of $R\in{\bf C}_{\cal B}$. Then $f:R\to \R$ is  $D_\mu$-integrable on $R$ iff $f$ is  $D_\mu$-integrable on $R_i$ for $ 1\leq i\leq n$ and, in which case,
    \[\int_R f\,d\mu=\sum_{i=1}^n \int_{R_i}f\,d\mu.\]

    \end{itemize}
\end{proposition}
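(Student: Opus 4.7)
My plan is to derive both parts from the Cauchy condition of Proposition~\ref{cauchy}, exploiting partitionability of ${\bf C}_{\cal B}$ (Proposition~\ref{induc}) and the existence of $\gamma$-fine tagged partitions on any crescent (Proposition~\ref{pg-exist}).

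For part (i), given $\epsilon>0$, I would apply the Cauchy condition to $f$ on $R$ to obtain a gauge $\gamma$ on $\overline{R}$ for which any two $\gamma$-fine tagged partitions of $R$ have Riemann sums within $\epsilon$ of each other. Restriction gives a gauge $\gamma_C$ on $\overline{C}$. By Proposition~\ref{induc}, $R\setminus C$ decomposes as a finite disjoint union of crescents, and by Proposition~\ref{pg-exist} each of these admits a $\gamma$-fine tagged partition; fix once and for all the union $\dot{Q}$ of these. For any two $\gamma_C$-fine tagged partitions $\dot{P}^C_1,\dot{P}^C_2$ of $C$, the unions $\dot{P}_k=\dot{P}^C_k\cup \dot{Q}$ are $\gamma$-fine tagged partitions of $R$ that share the same fixed $\dot{Q}$-contribution in the Riemann sum. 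The Cauchy estimate on $R$ therefore yields
\[
\Bigl|\int f\,d\mu_{\dot{P}^C_1,\gamma_C}-\int f\,d\mu_{\dot{P}^C_2,\gamma_C}\Bigr|<\epsilon,
\]
so the Cauchy condition on $C$ is verified and $f$ is $D_\mu$-integrable on $C$.

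Part (ii) $(\Rightarrow)$ is immediate from (i) with $C=R_i$. For part (ii) $(\Leftarrow)$, set $r_i=\int_{R_i}f\,d\mu$; I aim to show $\int_R f\,d\mu=\sum_i r_i$. Given $\epsilon>0$, I first choose gauges $\gamma_i$ on $\overline{R_i}$ so that every $\gamma_i$-fine tagged partition of $R_i$ gives a Riemann sum within $\epsilon/n$ of $r_i$. I then build a single gauge $\gamma$ on $\overline{R}$ that simultaneously encodes every $\gamma_i$ and prevents cross-talk between the pieces: for $x\in\overline{R}$ let $I(x)=\{i:x\in\overline{R_i}\}$ and set
\[
\gamma(x) = \min\Bigl(\min_{i\in I(x)}\gamma_i(x),\;\min_{j\notin I(x)}d(x,\overline{R_j})\Bigr).
\]
The second minimum forces $\overline{S}$ to avoid every $\overline{R_j}$ with $j\notin I(t)$ for any $\gamma$-fine tagged pair $(S,t)$, so $S=\bigsqcup_{i\in I(t)}(S\cap R_i)$. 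I would then refine each such $(S,t)$ into tagged crescents contained in a single $R_i$ and regroup by index into a tagged partition $\dot{P}^i$ of $R_i$ subordinate to $\gamma_i$, giving
\[
\int f\,d\mu_{\dot{P},\gamma}=\sum_{i=1}^n \int f\,d\mu_{\dot{P}^i,\gamma_i}
\]
with each summand within $\epsilon/n$ of $r_i$, hence total within $\epsilon$ of $\sum_i r_i$.

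The hard part is the refinement-and-retagging step in (ii) $(\Leftarrow)$. When the tag $t$ of $(S,t)$ lies simultaneously in several $\overline{R_i}$, the split pieces $S\cap R_i$ need not contain $t$ in their closures, so a fresh tag $s_i\in\overline{S\cap R_i}$ must be chosen; the $\gamma_i$-fineness of $(S\cap R_i,s_i)$ is not automatic from the $\gamma$-fineness of $(S,t)$, since $\gamma_i(s_i)$ is unrelated to $\gamma_i(t)$ or $\gamma(t)$. I expect to close this gap by tightening $\gamma$ by a constant factor (and, if needed, replacing each $\gamma_i$ by a suitable lower-semicontinuous shrink) so that the $\gamma_i(s_i)$-ball around any admissible retag still contains $\overline{S\cap R_i}$, and by a parallel estimate controlling the small discrepancy introduced into the Riemann-sum identity by the retagging.
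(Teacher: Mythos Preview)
Your proof of part~(i) is correct and essentially identical to the paper's: both use the Cauchy criterion, decompose $R\setminus C$ into crescents via Proposition~\ref{induc}, fix a $\gamma$-fine tagged partition on that complement, and observe that the common contribution cancels.

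For part~(ii)~$(\Leftarrow)$, your gauge is essentially the paper's (the paper uses $d(x,\partial R_i)$ for $x\in R_i^\circ$ and $\min_i\gamma_i(x)$ on the boundaries; your extra term $\min_{j\notin I(x)}d(x,\overline{R_j})$ is a sensible safeguard). However, the ``hard part'' you describe---retagging each piece $S\cap R_i$ with a fresh $s_i$, then trying to recover $\gamma_i$-fineness via lower-semicontinuous shrinks and controlling a resulting discrepancy in the Riemann sum---is not how the paper proceeds, and it is where your plan goes astray. The paper's key move is simply to \emph{keep the original tag} $t$ for every split piece. Since $i\in I(t)$ means $t\in\overline{R_i}$, the value $\gamma_i(t)$ is defined, and your own gauge already gives $\gamma(t)\le\gamma_i(t)$; hence
\[
\overline{S\cap R_i}\subset\overline{S}\subset O_{\gamma(t)}(t)\subset O_{\gamma_i(t)}(t),
\]
so $(S\cap R_i,t)$ is $\gamma_i$-fine with the \emph{same} tag. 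The Riemann-sum identity is then exact,
\[
f(t)\,\mu(S)=\sum_{i\in I(t)}f(t)\,\mu(S\cap R_i),
\]
so $\int f\,d\mu_{\dot P,\gamma}=\sum_i\int f\,d\mu_{\dot P^i,\gamma_i}$ holds on the nose, with no discrepancy to estimate and no need for any shrinking of the $\gamma_i$. Your concern that $\gamma_i(s_i)$ is unrelated to $\gamma_i(t)$ evaporates once you stop introducing $s_i$ at all.

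The one genuine technicality you flag---whether $t\in\overline{S\cap R_i}$ so that $(S\cap R_i,t)$ is literally an element of a tagged partition in the paper's sense---is not addressed by the paper either; it simply asserts that the resulting $\dot P'_i$ is a $\gamma_i$-fine tagged partition of $R_i$. So this is a point where the paper is terse rather than a place requiring the elaborate machinery you propose; in any case retagging is not the remedy, since it destroys the exact Riemann-sum identity that makes the argument work.
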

\begin{proof}
  (i) Let $\epsilon>0$ be given and let $\gamma$ be a witness for the Cauchy condition on the $D_\mu$-integrability of $f:R\to \R$. Consider two tagged partitions $\dot{Q_j}\prec \gamma\restriction_C$ for $j=1,2$. Since ${\bf C}_{\cal B}$ is partitionable, there exist a finite number of disjoint sets
  $C_i\in {\bf C}_{\cal B}$ ($1\leq i\leq n$) such that $R\setminus C=\bigcup_{1\leq i\leq n}C_i$. Let $\dot{P_i}\prec \gamma\restriction_{\overline C_i}$ be a tagged partition of $C_i$ for $1\leq i\leq n$. Then \[\dot{Q_1'}:=\dot{Q_1}\cup\left(\bigcup_{1\leq i\leq n}\dot{P_i}\right)\qquad\dot{Q'_2}:=\dot{Q_2}\cup\left(\bigcup_{1\leq i\leq n}\dot{P_i}\right)\] are two $\gamma$-fine tagged partitions of $R$. Thus,
  \[\left|\int f\,\mu_{\dot{Q_1},\gamma}-\int f\,\mu_{\dot{Q_2},\gamma}\right|=\left|\int f\,\mu_{\dot{Q'_1},\gamma}-\int f\,\mu_{\dot{Q'_2},\gamma}\right|<\epsilon,\]
  which shows that the Cauchy condition for $D_\mu$-integrability of $f$ on $C$ is satisfied.

  (ii) Suppose $f$ is $D_\mu$-integrable on $R_i$ with  $\int_{R_i}f\,d\mu=r_i$ for $ 1\leq i\leq n$. Let $\epsilon>0$. There exist gauges $\gamma_i:\overline{R_i}\to \R$ such that $\dot{P_i}\prec \gamma_i$ implies $|\int f\,d\mu_{\dot{P_i},\gamma_i}-r_i|<\epsilon/n$ for $ 1\leq i\leq n$. Define a gauge $\gamma:R\to \R$ as follows. For $x\in \overline{R}$, let
  \[\gamma(x)=\left\{\begin{array}{ll} \min\{\gamma_i(x),d(x,\partial R_i)\}&\mbox{if }\exists i.\,x\in R_i^\circ\\
  \min\{\gamma_i(x): x\in \partial R_i,1\leq i\leq n\}&\mbox{otherwise}\\\end{array}\\\right.\]
Suppose now we have a tagged $\gamma$-fined partition of $R$ given by: $\dot{P}=\{(C_j,t_j):1\leq j\leq m\}\prec \gamma$. If $t_j\in R_i^\circ$ for some $i\in\{1,\ldots,n\}$ then, by construction of $\gamma$, we have $C_j\subset R_i$. Otherwise for $i\in\{1,\ldots,n\}$  such that $t_j\in\partial R_i$ and $R_i\cap C_j\neq \emptyset$, let $C_{ij}=R_i\cap C_j$. Then, for $i\in\{1,\ldots,n\}$, let
\[\dot{P'_i}=\{(C_j,t_j):t_j\in R_i^\circ,1\leq j\leq m\}\]\[\cup\{(C_{ij},t_j):t_j\in \partial R_i\,\&\,R_i\cap C_j\neq\emptyset,1\leq j\leq m\}.\]
Then $\dot{P'_i}$ is a $\gamma_i$-fine is partition of $R_i$. Let $\dot{P'}=\bigcup_{1\leq i\leq n}\dot{P'_i}$. Since $R_i=\bigcup_{j=1}^mR_i\cap C_j$ for $1\leq i\leq n$, we have:
\[\left|\int f\,d\mu_{\dot{P},\gamma}-\sum_{i=1}^nr_n\right|=\left|\sum_{i=1}^n\int f\,d\mu_{\dot{P'_i},\gamma_i}-r_i\right|\]
\[\leq \sum_{i=1}^n\left|\int f\,d\mu_{\dot{P'_i},\gamma_i}-r_i\right|<\sum_{i=1}^n\epsilon/n=\epsilon,\]
and thus $f$ is $D_\mu$-integrable on $R$. On the other hand if $f$ is $D_\mu$-integrable on $R$, then by part (i) it is $D_\mu$-integrable on $R_i$ for $ 1\leq i\leq n$ with $\int_{R_i}f\,d\mu=r_i$, say. By the earlier proof, we then obtain the desired equality for the additivity of the $D_\mu$-integral.

\end{proof}

\begin{proposition}\label{charac-function}
The characteristic function $\chi_E$ of a measurable set $E$ is $D_\mu$-integrable with $\int \chi_E\,d\mu=\mu(E)$.
\end{proposition}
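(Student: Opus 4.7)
The plan is to exploit the regularity of the Borel measure $\mu$ on the compact metric space $X$ to sandwich $E$ tightly between a closed set and an open set, then engineer a gauge so that the crescents of any $\gamma$-fine tagged partition are forced to respect this sandwich. An appeal to Proposition~\ref{HK} will then clinch the result.

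First, given $\epsilon>0$, I would invoke the inner/outer regularity of finite Borel measures on compact metric spaces to pick a closed set $F$ and an open set $U$ with $F\subset E\subset U$ and $\mu(U\setminus F)<\epsilon$. Since $F$ is compact and disjoint from the closed set $U^c$, the distance $d(x,U^c)$ is strictly positive for $x\in F$ and $d(x,F)$ is strictly positive for $x\notin F$, so I can define a strictly positive gauge $\gamma$ on $X$ by
\[
\gamma(x)=\tfrac12\begin{cases} d(x,U^c) & x\in F,\\ \min\{d(x,F),d(x,U^c)\} & x\in U\setminus F,\\ d(x,F) & x\in X\setminus U.\end{cases}
\]
For any $\gamma$-fine tagged partition $\dot P=\{(R_i,t_i):1\le i\le n\}$, this gauge enforces three containments: $t_i\in F\Rightarrow R_i\subset U$; $t_i\in U\setminus F\Rightarrow R_i\subset U\setminus F$; and $t_i\in X\setminus U\Rightarrow R_i\cap F=\emptyset$.

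Since $\int\chi_E\,d\mu_{\dot P,\gamma}=\sum_{t_i\in E}\mu(R_i)$, the next step is to split this sum into tags in $F$ and tags in $E\setminus F\subset U\setminus F$. The third containment gives $\mu(F)=\sum_i\mu(F\cap R_i)=\sum_{t_i\in F}\mu(F\cap R_i)$, hence $\sum_{t_i\in F}\mu(R_i)-\mu(F)=\sum_{t_i\in F}\mu(R_i\setminus F)\in[0,\mu(U\setminus F)]$, using that $R_i\setminus F\subset U\setminus F$ for $t_i\in F$ by the first containment and the $R_i$ are pairwise disjoint. Similarly $\sum_{t_i\in E\setminus F}\mu(R_i)\le\sum_{t_i\in U\setminus F}\mu(R_i)\le\mu(U\setminus F)$ by the second containment. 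Coupled with $\mu(E)\in[\mu(F),\mu(F)+\epsilon]$, these estimates yield $\bigl|\int\chi_E\,d\mu_{\dot P,\gamma}-\mu(E)\bigr|\le 2\epsilon$, which suffices by Proposition~\ref{HK}.

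The only non-routine ingredient is inner/outer regularity of $\mu$, which is classical for finite Borel measures on a compact (indeed, $\sigma$-compact) metric space, so no genuine obstacle arises; the remaining work is the elementary partitioning of the Riemann-style sum described above.
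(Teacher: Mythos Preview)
Your argument is correct and follows the same route as the paper's proof: sandwich $E$ between a closed set and an open set via regularity of $\mu$, then define a gauge from distances to these sets so that each crescent's position is governed by its tag. The paper presents the final estimate as the cleaner two-sided bound $\mu(C)\le\int\chi_E\,d\mu_{\dot P,\gamma}\le\mu(O)$ and disposes of the case $\mu(E)=0$ separately via Proposition~\ref{basic-int-prop}(i), but these are cosmetic differences.
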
\begin{proof}
  The case $\mu(E)=0$ is already proved in Proposition~\ref{basic-int-prop}(i). Assume $\mu(E)>0$ and $\epsilon>0$. By regularity of $\mu$, there exist open set $O$ and closed set $C$ such that $E\subset O$ with $\mu(E)>\mu(O)-\epsilon$ and $C\subset E$ with $\mu(E)<\mu(C)-\epsilon$. Define a gauge $\gamma$ on $X$ as follows:
  \[\gamma(x)=\left\{\begin{array}{ll} d(x,\partial{O})& x\in C\\
  d(x,C)&x\in O^c\\
  \min\{d(x,C),d(x,\partial{O})\}& x\in O\setminus C\\\end{array}\right.\]
  If $\dot{P}\prec \gamma $ is a tagged partition of $X$, then
  \[\mu(O)\geq \int \chi_E\,d\mu_{\dot{P},\gamma}\geq \mu(C),\qquad\mbox{i.e.,}\]
  \[\mu(E)+\epsilon> \int \chi_E\,d\mu_{\dot{P},\gamma}> \mu(E)-\epsilon\]
  as required.
\end{proof}
From Proposition~\ref{restrict-int}(ii), we obtain:
\begin{corollary}
  If $f=\sum_{1\leq i\leq n}c_i\chi_{E_i}$ is a step function, then
  \[\int f\,d\mu=\sum_{1\leq i\leq n}c_i\mu(E_i).\]
\end{corollary}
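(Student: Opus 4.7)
The plan is to combine Proposition~\ref{charac-function}, which provides $\int \chi_E\,d\mu=\mu(E)$ for any $\mu$-measurable set $E$, with the linearity of the $D_\mu$-integral established in Proposition~\ref{basic-int-prop}. The author's phrasing ``From Proposition~\ref{restrict-int}(ii)'' suggests the $E_i$ are intended as pairwise disjoint crescents in ${\bf C}_{\cal B}$ whose union is $X$; if the family is not already a partition of $X$, one can append a zero-coefficient term over $X\setminus\bigcup_i E_i$, which by Proposition~\ref{induc} splits into finitely many disjoint crescents in ${\bf C}_{\cal B}$, so there is no loss of generality in assuming $\{E_i:1\leq i\leq n\}$ partitions $X$.

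Once this reduction is in place, the main argument is two short steps. First, combining Proposition~\ref{charac-function} with the scalar-multiplication clause Proposition~\ref{basic-int-prop}(iii) yields $\int c_i\chi_{E_i}\,d\mu=c_i\mu(E_i)$ for each $i$. Second, a straightforward induction on $n$ using Proposition~\ref{basic-int-prop}(ii) (additivity of the integral under sums of integrands) shows that $f=\sum_{i=1}^n c_i\chi_{E_i}$ is itself $D_\mu$-integrable with $\int f\,d\mu=\sum_{i=1}^n c_i\mu(E_i)$, which is the desired identity.

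An equivalent route, more faithful to the author's stated reference, is the partition-additivity argument: on each $E_i$ the restriction of $f$ is the constant function $c_i$, so by Proposition~\ref{charac-function} and Proposition~\ref{basic-int-prop}(iii) one has $\int_{E_i} f\,d\mu=c_i\mu(E_i)$; then Proposition~\ref{restrict-int}(ii) assembles these into $\int_X f\,d\mu=\sum_{i=1}^n c_i\mu(E_i)$.

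I do not anticipate any real obstacle here: all of the analytic content has already been absorbed into Propositions~\ref{charac-function}, \ref{basic-int-prop}, and~\ref{restrict-int}, and the corollary is essentially a bookkeeping assembly of those ingredients. The only mild subtlety is the aforementioned padding of $\{E_i\}$ to a full partition of $X$ when invoking Proposition~\ref{restrict-int}(ii), which in any case is circumvented by the linearity route.
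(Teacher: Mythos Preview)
Your proposal is correct and aligns with the paper's own treatment: the paper gives no written proof beyond the single line ``From Proposition~\ref{restrict-int}(ii), we obtain'', and your second route (partition additivity, using that $f\restriction_{E_i}\equiv c_i$ so $\int_{E_i}f\,d\mu=c_i\mu(E_i)$, then summing via Proposition~\ref{restrict-int}(ii)) is exactly what that reference encodes. Your first route via Propositions~\ref{charac-function} and~\ref{basic-int-prop}(ii),(iii) is a legitimate alternative that has the advantage of applying directly to arbitrary measurable $E_i$ without needing the partition-by-crescents hypothesis implicit in the paper's cited proposition.
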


The following lemma extends that of Saks-Henstock for the case of Lebesgue measure on $[0,1]$ to  arbitrary normalised measure on a compact metric space. A {\em sup-partition} of $X$ is a finite set of disjoint crescents in ${\bf C}_{\cal B}$. If $\gamma$ is a gauge, then $\{(R_i,t_i):1\leq i\leq n\}$ is a  {\em $\gamma$-fine tagged sub-partition} of $X$ if  $\{R_i:1\leq i\leq n\}$ is a subpartition, $t_i\in\overline{R_i}$ and $R_i\subset O_{\gamma(t_i)}(t_i)$ for $1\leq i\leq n$

\begin{lemma}\label{saks}
  Suppose $f:X\to\R$ is $D_\mu$-integrable and, for $\epsilon>0$, let $\gamma$ be a gauge such that $\dot{P}\prec \gamma$ implies:
  \[\left|\int f\,d\mu_{\dot P,\gamma}-\int f\,d\mu\right|\leq \epsilon.\]
  Then for any $\gamma$-fine tagged sub-partition \[\dot{Q}=\{(R_i,t_i):1\leq i\leq n\}\] of $X$ with $R:=\bigcup_{1\leq i\leq n}R_i$, we have:
  \[\left |\int f\,d\mu_{\dot{Q},\gamma}-\int_{R} f\,d\mu\right|\leq \epsilon.\]

  \end{lemma}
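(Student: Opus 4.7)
The plan is to follow the classical Saks--Henstock strategy: extend the given $\gamma$-fine tagged sub-partition $\dot{Q}$ to a full $\gamma$-fine tagged partition of $X$ in a controlled way, apply the hypothesis to this extension, and then use the additivity of the $D_\mu$-integral to isolate the contribution of $\dot{Q}$.

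First, since $\bigcup_{1\le i\le n} R_i \in {\bf C}_{\cal B}$ (finite unions of disjoint crescents in a partitionable family decompose, via repeated application of Proposition~\ref{induc}, into finitely many disjoint crescents), I apply Proposition~\ref{induc} with $C=X$ and the $R_i$ as the sets being removed to obtain pairwise disjoint crescents $C_1,\ldots,C_m\in {\bf C}_{\cal B}$ with $X\setminus R=\bigcup_{1\le j\le m} C_j$. By Proposition~\ref{restrict-int}(i), $f$ is $D_\mu$-integrable on each $\overline{C_j}$, and by Proposition~\ref{restrict-int}(ii) applied to the partition $\{R_1,\dots,R_n,C_1,\dots,C_m\}$ of $X$, we have the additivity
\[\int f\,d\mu = \int_R f\,d\mu + \sum_{j=1}^m \int_{C_j} f\,d\mu.\]

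Next, fix an arbitrary $\eta>0$. For each $j$, the $D_\mu$-integrability of $f$ on $\overline{C_j}$ yields a gauge $\gamma_j$ on $\overline{C_j}$ with $\bigl|\int f\,d\mu_{\dot{P_j},\gamma_j} - \int_{C_j}f\,d\mu\bigr| < \eta/m$ for every $\gamma_j$-fine tagged partition $\dot{P_j}$ of $C_j$. Replacing $\gamma_j$ by $\min\{\gamma_j,\gamma\restriction_{\overline{C_j}}\}$ I may assume $\gamma_j \le \gamma\restriction_{\overline{C_j}}$. Proposition~\ref{pg-exist} guarantees the existence of such a $\gamma_j$-fine tagged partition $\dot{P_j}$ for each $C_j$. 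Then the union $\dot{P}:=\dot{Q}\cup\bigcup_{j=1}^m \dot{P_j}$ is a $\gamma$-fine tagged partition of $X$, so by the hypothesis on $\gamma$,
\[\left|\int f\,d\mu_{\dot{P},\gamma} - \int f\,d\mu\right| \le \epsilon.\]

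Finally, because the simple valuation associated with the disjoint union of tagged partitions splits additively, $\int f\,d\mu_{\dot{P},\gamma} = \int f\,d\mu_{\dot{Q},\gamma} + \sum_{j=1}^m \int f\,d\mu_{\dot{P_j},\gamma_j}$. Subtracting the additivity identity for $\int f\,d\mu$ displayed above and rearranging gives
\[\left|\int f\,d\mu_{\dot{Q},\gamma}-\int_R f\,d\mu\right| \le \left|\int f\,d\mu_{\dot{P},\gamma}-\int f\,d\mu\right| + \sum_{j=1}^m\left|\int f\,d\mu_{\dot{P_j},\gamma_j}-\int_{C_j}f\,d\mu\right| < \epsilon+\eta.\]
Since $\eta>0$ was arbitrary, I let $\eta\to 0^+$ to conclude. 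The only mildly delicate point is the initial decomposition of $X\setminus R$ into crescents and the verification that the assembled tagged partition inherits $\gamma$-fineness; both are handled by the partitionable property and the choice $\gamma_j\le \gamma$. Everything else is bookkeeping with the additivity already established in Proposition~\ref{restrict-int}(ii).
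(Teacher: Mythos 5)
Your proposal is correct and follows essentially the same route as the paper's own proof: decompose $X\setminus R$ into disjoint crescents via Proposition~\ref{induc}, use Proposition~\ref{restrict-int}(i) to get $D_\mu$-integrability on each piece with gauges $\gamma_j\le\gamma$ accurate to within $\eta/m$, assemble a full $\gamma$-fine tagged partition, and conclude by additivity, the triangle inequality, and letting $\eta\to 0^+$. Your write-up is in fact slightly more careful than the paper's in explicitly citing Proposition~\ref{pg-exist} for the existence of the $\gamma_j$-fine partitions and Proposition~\ref{restrict-int}(ii) for the additivity identity.
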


\begin{proof}
  By Proposition~\ref{induc}, there exists a finite set of pairwise disjoint crescents $C_j\in {\bf C}_{\cal B}$, with say $1\leq j\leq m$, such that $X\setminus (\bigcup_{1\leq i\leq n}R_i)=\bigcup_{1\leq j\leq m}C_j$. By Proposition~\ref{restrict-int}(i), $f$ is $D_\mu$-integrable on each $C_j$ for $1\leq j\leq m$. Let $a>0$ and assume $\gamma_j:\overline{C_j}\to \R$ be a gauge with $\gamma_j\leq \gamma\restriction_{C_j}$ such that $\dot{Q_j}\prec \gamma_j$ and we have $|\int_{C_j} f\,d\mu-\int f\,d\mu_{\dot{Q_j},\gamma_j}|<a/m$  for $1\leq j\leq m$. Put $\dot{P}=\dot{Q}\cup (\bigcup_{1\leq j\leq m}\dot{Q_j})$. Then, $\dot{P}\prec \gamma$ and we have:
  \[\int f\,d\mu_{\dot{P}}=\int f\,d\mu_{\dot{Q},\gamma}+\sum_{1\leq j\leq m}\int f\,d\mu_{\dot{Q_j},\gamma_j}\]
      \[\int f\,d\mu=\int_Rf\,d\mu+\sum_{1\leq j\leq m}\int_{C_j}f\,d\mu.\]

 Consequently,
 \[ \left|\int f\,d\mu_{\dot{Q},\gamma}-\int_{R} f\,d\mu\right|\leq \left|\int f\,d\mu_{\dot{P}}-\int f\,d\mu\right|+\sum_{j=1}^m\left|\int f\,d\mu_{\dot{Q_j},\gamma_j}
   - \int_{C_j}f\,d\mu. \right|<\epsilon+m(a/m)=\epsilon+a\]
     Since $a>0$ is arbitrary, the result follows. 

\end{proof}

Based on Lemma~\ref{saks}, we can then prove the following main result. 
\begin{theorem}\label{mct} ({\bf Monotone Convergence.}) Let $(f_n)_{n\geq 0}$ be a monotone sequence of $D_\mu$-integrable functions on $X$ and put $f(x)=\lim_{n\to \infty}f_n(x)$ for $x\in X$. Then $f$ is $D_\mu$-integrable iff the sequence $(\int f_n\,d\mu)_{n\geq 0}$ is bounded in $\R$.

  \end{theorem}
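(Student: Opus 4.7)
The plan is a standard Henstock-Kurzweil-style monotone convergence argument adapted to the PG pair framework developed above. I may assume without loss of generality that $(f_n)$ is increasing, so that $(\int f_n\,d\mu)$ is itself monotone and boundedness coincides with convergence. For the ``only if'' direction, if $f$ is $D_\mu$-integrable then $f - f_n \geq 0$ is also $D_\mu$-integrable by Proposition~\ref{basic-int-prop}(ii),(iii), and Proposition~\ref{basic-int-prop}(iv) then gives $\int f_n\,d\mu \leq \int f\,d\mu$ for all $n$, bounding the sequence.

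For the ``if'' direction, I would set $A := \lim_n \int f_n\,d\mu$ and, given $\epsilon > 0$, choose $N$ with $A - \int f_n\,d\mu < \epsilon$ for all $n \geq N$, and for each such $n$ pick a gauge $\gamma_n$ witnessing $D_\mu$-integrability of $f_n$ to precision $\epsilon/2^{n+1}$. Using pointwise convergence, for every $x \in X$ I would choose the least integer $n(x) \geq N$ with $|f(x) - f_{n(x)}(x)| \leq \epsilon$, and define a gauge $\gamma$ on $X$ by $\gamma(x) := \gamma_{n(x)}(x)$. The aim is to show that $\gamma$ witnesses $D_\mu$-integrability of $f$ to the value $A$, with total error at most $3\epsilon$.

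Given a $\gamma$-fine tagged partition $\dot{P} = \{(R_i,t_i):1\leq i\leq k\}$ of $X$, I would split $\int f\,d\mu_{\dot{P},\gamma} - A$ telescopically as
\[\sum_i\bigl(f(t_i) - f_{n(t_i)}(t_i)\bigr)\mu(R_i) + \sum_i\left(f_{n(t_i)}(t_i)\mu(R_i) - \int_{R_i} f_{n(t_i)}\,d\mu\right) + \left(\sum_i\int_{R_i} f_{n(t_i)}\,d\mu - A\right).\]
The first sum is bounded by $\epsilon\,\mu(X) = \epsilon$ by the choice of $n(t_i)$. The third is sandwiched, using monotonicity of $(f_n)$ together with the additivity of Proposition~\ref{restrict-int}(ii), between $\int_X f_N\,d\mu$ and $\int_X f_m\,d\mu$ where $m := \max_i n(t_i) \geq N$, both of which lie within $\epsilon$ of $A$ by the choice of $N$.

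The main obstacle is the middle sum, which I would handle by invoking the Saks-Henstock lemma (Lemma~\ref{saks}). Grouping indices by common value $n(t_i) = n$, the collection $\{(R_i,t_i) : n(t_i) = n\}$ is a $\gamma_n$-fine tagged sub-partition of $X$, since $\gamma(t_i) = \gamma_n(t_i)$ on this group by construction; Lemma~\ref{saks} applied to $f_n$ then bounds each group's contribution by $\epsilon/2^{n+1}$, and summing over $n \geq N$ yields at most $\epsilon$. Combining the three pieces gives a total error at most $3\epsilon$, so as $\epsilon > 0$ was arbitrary we conclude $\int f\,d\mu = A$, completing the proof.
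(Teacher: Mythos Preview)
Your argument is correct and is precisely the standard Henstock--Kurzweil monotone convergence proof, driven by the Saks--Henstock lemma, that the paper has in mind when it says ``Building on Lemma~\ref{saks}, the proof is similar to the Henstock--Kurzweil case.'' The paper gives no further details, so your write-up in fact supplies exactly the omitted argument.
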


\begin{proof}
Building on Lemma~\ref{saks}, the proof is similar to the Henstock-Kurzweil case.

\end{proof}

Recall that the Lebesgue integral of a non-negative function $f:X\to \R$ is defined as the supremum of the Lebesgue integral of non-negative simple functions below it: \[{\cal L}\int f\,d\mu=\sup\left\{ \int s\,d\mu:\mbox{simple }s\leq f\right\},\]
  where for a simple function $s=\sum_{1\leq i\leq n}a_i\chi_{S_i}$ we have $\int s\,d\mu=\sum_{1\leq i\leq n}a_i\mu(S_i)$. 
  For $f:X\to \R$, let $f_+,f_-:X\to \R$ be given by $f_+(x)=\max\{0,f(x)\}$ and $f_-(x)=\min\{f(x),0\}$. Then $f$ is said to be {\em Lebesgue integrable} with value $r\in\R$ if ${\cal L}\int f_+\,d\mu$ and ${\cal L}\int (-f_-)\,d\mu$ both exist and $r={\cal L}\int f_+\,d\mu-{\cal L}\int (-f_-)\,d\mu$. Since characteristic functions are $D_\mu$-integrable, it follows from Proposition~\ref{basic-int-prop}(ii) that simple functions are $D_\mu$-integrable. From the Monotone Convergence Theorem~\ref{mct}, we obtain:
  \begin{theorem}\label{lebesgue}
    If $f:X\to \R$ is Lebesgue integrable with respect to $\mu$, then it is $D_\mu$-integrable and the values of the two integrals coincide.
  \end{theorem}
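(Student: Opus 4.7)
The plan is to reduce to the non-negative case and then apply the Monotone Convergence Theorem (Theorem~\ref{mct}) to a standard dyadic approximation by simple functions. By the definition of Lebesgue integrability, both $f_+\geq 0$ and $-f_-\geq 0$ are Lebesgue integrable with finite Lebesgue integrals. Once the theorem is established for non-negative Lebesgue integrable functions, linearity (Proposition~\ref{basic-int-prop}(ii),(iii)) yields the $D_\mu$-integrability of $f=f_+-(-f_-)$ together with the equality $\int f\,d\mu={\cal L}\int f\,d\mu$.

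So fix a non-negative Lebesgue integrable $g:X\to\R$. After modifying $g$ on a $\mu$-null set (which alters neither integral, by Proposition~\ref{basic-int-prop}(i),(ii)), I may assume $g$ is a finite-valued, non-negative Borel measurable function. Form the standard dyadic truncation
\[s_n=\sum_{k=0}^{n2^n-1}\frac{k}{2^n}\,\chi_{E_{n,k}}+n\,\chi_{E_n},\]
where $E_{n,k}=g^{-1}([k/2^n,(k+1)/2^n))$ and $E_n=g^{-1}([n,\infty))$. Then $(s_n)_{n\geq 1}$ is an increasing sequence of non-negative simple functions with $s_n(x)\nearrow g(x)$ for every $x\in X$, and $s_n\leq g$ for all $n$. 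Since characteristic functions of measurable sets are $D_\mu$-integrable with the expected value (Proposition~\ref{charac-function}), the corollary immediately following it shows that each $s_n$ is $D_\mu$-integrable with $\int s_n\,d\mu={\cal L}\int s_n\,d\mu\leq {\cal L}\int g\,d\mu$, so the sequence $(\int s_n\,d\mu)_n$ is bounded above.

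Applying Theorem~\ref{mct} to $(s_n)$ now yields that the pointwise limit $g$ is $D_\mu$-integrable with $\int g\,d\mu=\lim_n\int s_n\,d\mu$. The classical Lebesgue Monotone Convergence Theorem applied to the same sequence gives $\lim_n {\cal L}\int s_n\,d\mu={\cal L}\int g\,d\mu$, so the two limits coincide and $\int g\,d\mu={\cal L}\int g\,d\mu$. The main subtle point is ensuring that Theorem~\ref{mct} delivers not merely the existence of $\int g\,d\mu$, but also its identification as $\lim_n\int s_n\,d\mu$; this ``value'' half of monotone convergence should follow from the Saks-Henstock lemma (Lemma~\ref{saks}) in the same way as in the classical Henstock-Kurzweil argument. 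Beyond that, the only care needed is the reduction to Borel-measurable, finite-valued $g$, which is absorbed into the null-set modification.
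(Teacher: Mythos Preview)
Your proposal is correct and follows essentially the same route as the paper: reduce to the non-negative case via $f=f_+-(-f_-)$, approximate by simple functions whose $D_\mu$- and Lebesgue integrals agree (Proposition~\ref{charac-function} and its corollary), invoke the Monotone Convergence Theorem~\ref{mct}, and then recombine using linearity (Proposition~\ref{basic-int-prop}(ii)). You are simply more explicit than the paper in writing down the dyadic approximation $s_n$ and in flagging that Theorem~\ref{mct}, as stated, gives only existence and not the identification $\int g\,d\mu=\lim_n\int s_n\,d\mu$; the paper silently assumes this ``value'' half as well, and you are right that it comes out of the standard Saks--Henstock argument underlying the proof of Theorem~\ref{mct}.
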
\begin{proof}
If $f$ is Lebesgue integrable with value $r$ then $f_+,f_-$ are both Lebesgue integrable with $r={\cal L}\int f_+\,d\mu-{\cal L}\int (-f_-)\,d\mu$. Sine the Lebesgue integral of a simple function with respect to $\mu$ coincides with its $D_\mu$-integral, it follows by the Monotone Convergence Theorem~\ref{mct}, that the $D_\mu$-integral of $f_+,f_-$ coincide with their Lebesgue integral respectively. By Proposition~\ref{basic-int-prop}(ii), we obtain $r=\int f_+\,d\mu-\int (-f_-)\,d\mu$, as required.
    \end{proof}
  We also have an extension of the convergence theorems of the Henstock-Kurzweil integration theory for uniform convergence, Fatou's lemma and dominated convergence with similar proofs, which we skip here.

  \section{Basis change in integration on unit interval}
  We now consider the $D_\mu$ integral of unctions on the unit interval, i.e., $\mu$ is a normalised measure on $[0,1]$. A particular case is given by the well-known Henstock-Kurzweil-Stieljes integration. If the normalised Borel measure $\mu$ is absolutely continuous with respect to the Lebesgue measure $\lambda$, i.e. $\mu(A)=$ whenever $\lambda(A)=0$ for any Borel set $A\subset [0,1]$ and if ${\cal B}_1$ is the set of all intervals of $[0,1]$ as in Example~\ref{unit}, then we obtain the Henstock-Kurzweil-Stieljes integration of the form $\int f(t)\,d\phi(t)$ where $\phi(t)=\mu([0,t])$.

 We show here generally that the $D_\mu$ integral of a function on the unit interval with respect to any normalised Borel measure does not change if we change the open basis ${\cal B}$ in defining the integral. We note that any open basis  ${\cal B}$ induces a dense subset on $[0,1]$ and  ${\bf C}{\cal B}$ consists precisely of intervals with endpoints in this dense set.

   Suppose now we take any countable dense set $S\subset [0,1]$ and let ${\cal B}_2$ be the set of all (open, closed or half-open/half-closed) intervals with endpoints in $S$. We now have two notions of $D_\mu$-integrability one, namely the classical definition, with respect to ${\cal B}_1$ and one with respect to ${\cal B}_2$. To fix our ideas, we write ``$\int f\,d\mu$ (${\bf C}_{{\cal B}_i}$)'' to mean the value of the $D_\mu$-integral with respect to the basis ${\cal B}_i$, $i=1,2$. For a non-empty interval $R\subset [0,1]$, let $R^-$ and $R^+$ denote its left and right endpoints respectively. 

\begin{proposition}\label{invariant-HK}
Given any two collections of crescents generated by two bases ${\cal B}_1$ and ${\cal B}_2$, a map $f:[0,1]\to \R$ is  $D_\mu$ integrable with respect to ${\bf C}_{{\cal B}_1}$ iff it is $D_\mu$-integrable with respect to ${\bf C}_{{\cal B}_2}$ and
  \[  \int f\,d\mu\, ({\bf C}_{{\cal B}_1})=  \int f\,d\mu\, ({\bf C}_{{\cal B}_2})      \]
\end{proposition}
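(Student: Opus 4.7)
The plan is to split the bi-implication into its two directions. The forward direction, $\mathbf{C}_{\mathcal{B}_1}$-integrability $\Rightarrow$ $\mathbf{C}_{\mathcal{B}_2}$-integrability, should be essentially free: since every $\mathcal{B}_2$-tagged partition is already a $\mathcal{B}_1$-tagged partition (as $\mathbf{C}_{\mathcal{B}_2}\subseteq\mathbf{C}_{\mathcal{B}_1}$), any gauge witnessing $\mathcal{B}_1$-integrability in the sense of Proposition~\ref{HK} automatically witnesses $\mathcal{B}_2$-integrability with the same value; existence of $\gamma$-fine $\mathcal{B}_2$-tagged partitions is supplied by Proposition~\ref{pg-exist} applied to $\mathcal{B}_2$.

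For the harder reverse direction, I would argue by a controlled perturbation of interior endpoints. Given $\mathcal{B}_2$-integrability with value $r$ and $\epsilon>0$, Proposition~\ref{HK} yields a gauge $\gamma$ such that every $\gamma$-fine $\mathcal{B}_2$-tagged partition $\dot Q$ satisfies $|\int f\,d\mu_{\dot Q,\gamma}-r|<\epsilon/2$. Set $\gamma':=\gamma/2$. Given any $\gamma'$-fine $\mathcal{B}_1$-tagged partition $\dot P=\{(R_i,t_i):1\leq i\leq n\}$ of $[0,1]$ with interior endpoints $a_1<\cdots<a_{n-1}$, I build a companion $\mathcal{B}_2$-tagged partition $\dot Q=\{(R'_i,t_i)\}$ by replacing each $a_j$ with a point $a'_j\in S$. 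Writing $F(t):=\mu([0,t])$, the point $a'_j$ is picked on the side of $a_j$ opposite any atom of $\mu$ at $a_j$ (so that the atom stays in its original crescent) and close enough that both $|a'_j-a_j|<\tfrac12\min(\gamma(t_j),\gamma(t_{j+1}))$ (giving $\gamma$-fineness of $\dot Q$) and
\[
|F(a'_j)-F(a_j)|<\delta_j:=\frac{\epsilon}{2^{j+2}(|f(t_j)|+|f(t_{j+1})|+1)};
\]
both inequalities are achievable by density of $S$ combined with one-sided continuity of $F$ on the atom-free side of $a_j$. An Abel summation of the Riemann-sum difference then yields
\[
\int f\,d\mu_{\dot P,\gamma'}-\int f\,d\mu_{\dot Q,\gamma}=\sum_{j=1}^{n-1}\bigl(f(t_{j+1})-f(t_j)\bigr)\bigl(F(a'_j)-F(a_j)\bigr),
\]
whose absolute value is at most $\sum_j(|f(t_j)|+|f(t_{j+1})|)\delta_j<\epsilon/2$; together with the triangle inequality and the choice of $\gamma$ this gives $|\int f\,d\mu_{\dot P,\gamma'}-r|<\epsilon$, and Proposition~\ref{HK} delivers $\mathbf{C}_{\mathcal{B}_1}$-integrability of $f$ with value $r$.

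The main obstacle is the engineering of the perturbation in the presence of atoms of $\mu$ combined with tags $t_i$ coinciding with interval endpoints: one must simultaneously preserve the atom-side at $a_j$ and the relation $t_i\in\overline{R'_i}$. The conflict case $t_j=t_{j+1}=a_j$ is removed simply by merging the crescents $R_j,R_{j+1}$ into a single $\mathcal{B}_2$-crescent and skipping the perturbation at $a_j$; the residual case where a one-sided tag constraint disagrees with atom-side is handled by choosing the half-open/closed type of $R'_j,R'_{j+1}$ so that the atom is reassigned with an error also dominated by $\delta_j$. The dyadic weighting $\delta_j\sim 2^{-j}/(|f(t_j)|+|f(t_{j+1})|)$ is essential since neither $n$ nor the tags are known at the time $\gamma'$ is specified, precluding any uniform bound on $\sum_j|f(t_j)|$.
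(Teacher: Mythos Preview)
Your strategy---handle one direction by inclusion of crescent families, and for the other direction perturb each interior endpoint of a given $\gamma$-fine $\mathcal{B}_1$-tagged partition into the dense set $S$ while controlling both $\gamma$-fineness and the change in the Riemann sum---is exactly the paper's approach. Two points of comparison are worth noting.

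First, you assume $\mathbf{C}_{\mathcal{B}_2}\subseteq\mathbf{C}_{\mathcal{B}_1}$ without justification; for two arbitrary bases neither inclusion holds. The paper fixes this by first reducing to the case where $\mathcal{B}_1$ is the collection of \emph{all} open sets of $[0,1]$ (so that $\mathbf{C}_{\mathcal{B}_1}$ is all intervals and automatically contains any $\mathbf{C}_{\mathcal{B}_2}$); the general statement then follows by transitivity through this maximal basis. You should add this reduction.

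Second, your claim that the dyadic weighting $\delta_j\sim 2^{-j}/(|f(t_j)|+|f(t_{j+1})|)$ is ``essential since neither $n$ nor the tags are known at the time $\gamma'$ is specified'' is a misconception. Only the gauge must be fixed in advance; the companion $\mathcal{B}_2$-partition $\dot Q$ and the perturbation sizes are constructed \emph{after} the $\mathcal{B}_1$-partition $\dot P$ is given, at which point $n$ and all $f(t_i)$ are known. The paper exploits this: it sets $K:=\max_j|f(t_j)|$ and, using regularity of $\mu$, chooses a single $\delta>0$ so that each one-sided punctured neighbourhood $\mu((p_j,p_j+\delta])$, $\mu([p_j-\delta,p_j))$ has mass below $\epsilon/(4mK)$; it then moves $p_j$ to whichever side keeps the half-open type (hence the atom at $p_j$, if any) in its original crescent, and bounds the Riemann-sum difference directly by $\sum_j|f(t_j)|\,|\mu(C_j)-\mu(C_j')|\le mK\cdot\epsilon/(2mK)=\epsilon/2$. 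This avoids both the Abel summation and the halving $\gamma'=\gamma/2$, and it sidesteps the atom/type bookkeeping you flag as the main obstacle. Your scheme also works once the atom case is written out carefully (your displayed identity with $F$ is only exact when all crescents share the same half-open convention; in general one must replace $F(a_j)$ by the appropriate one-sided limit), but it is more elaborate than necessary.
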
 \begin{proof}
 It is sufficient to prove the result when ${\cal B}_1$ is taken to be the collection of all open sets.  In fact, since ${\cal B}_2\subset {\cal B}_1$, it is clear that if $f$ is $D_\mu$-integrable, then it is $D_\mu$-integrable with respect to ${\cal B}_2$. Suppose therefore that $f$ is $D_\mu$-integrable with respect to ${\cal B}_2$ with integral value $r$ and assume ${\cal B}_2$ induces the dense subset $T\subset[0,1]$ so that any element of ${\bf C}_{{\cal B}_2}$ is precisely an interval with endpoints in $T$. Let $\epsilon>0$ be given. Let $\gamma$ be a gauge such that $\dot{Q}\prec \gamma$ implies:
  \[\left|\int f\,d\mu_{\dot{Q},\gamma} ({\bf C}_{{\cal B}_2}) -r\right|<\epsilon/2\]
  Let $\dot{Q}= \{(C_j,s_j):1\leq j\leq m\}\prec \gamma$ with $C_j\in {\bf C}_{{\cal B}_1}$. We can assume $s_j\neq s_{j+1}$ for $1\leq j\leq m-1$, since otherwise we can merge the two intervals $C_k$ and $C_{k+1}$. We now convert the tagged partition $\dot{Q}$ to a tagged partition $\dot{Q'}$, with the same tags, such that the following conditions hold.
  \begin{itemize}
  \item[(i)] The intervals in $Q'$ are in ${\bf C}_{{\cal B}_2}$, i.e., have endpoints in $T$.
  \item[(ii)] $\dot{Q'}\prec \gamma$.
  \item[(iii)] \begin{equation}\label{change-base}\left|\int f\,d\mu_{\dot{Q'},\gamma}({\bf C}_{{\cal B}_2}) -\int f\,d\mu_{\dot{Q},\gamma}({\bf C}_{{\cal B}_1}) \right|<\epsilon/2\end{equation}
  \end{itemize}
  
  From these three conditions, it will then follow that $f$ is $D_\mu$-integrable with respect to ${\cal B}_1$ and  $ \int f\,d\mu\, ({\bf C}_{{\cal B}_1})=  \int f\,d\mu\, ({\bf C}_{{\cal B}_2})$ as required. Let $K>0$ be such that $|f(s_j)|\leq K$ for $1\leq j\leq m$. Since any normalised Borel measure on a compact metric space is regular, there exists $\delta>0$, with $\delta<\min_{1\leq j\leq m} (C_j^+-C_j^-)/3$, such that $\mu([C_j^{\pm}-\delta, C^{\pm}_j+\delta])-\mu(\{C_j^{\pm}\})<\epsilon/4mK$, which implies that $\mu([C_j^{\pm}-\delta, C^{\pm}_j))<\epsilon/4mK$ and $\mu((C_j^{\pm}, C_j^{\pm}+\delta])<\epsilon/4mK$ for $1\leq j\leq m$.  The procedure to obtain $\dot{Q'}=\{(C_j',s_j): 1\leq j\leq m\} $ from  $\dot{Q}$, which preserves the set of tags $\{s_j:1\leq j\leq m\}$, is applied to each pair of adjacent intervals $C_j$ and $C_{j+1}$ (for $1\leq j\leq m-1$) in $Q$ by slightly expanding or shrinking these two intervals $C_j$ and $C_{j+1}$ to obtain $C_j'$ and $C_{j+1}'$. This is achieved by slightly moving the boundary point $p_j:=C_j^+=C_{j+1}^-$, either left or right, to the new boundary point $p_j'$ with $p_j\in C_j$ iff $p_j'\in C'_j$.

  The detailed procedure for the adjacent pair $C_j$ and $C_{j+1}$ with the boundary point $p_j$ is as follows.
    Let $\delta_0:=\min\{\delta, \gamma(p_j):{1\leq j\leq m-1}\}$.
    \begin{itemize} \item[(i)] If $p_j\in C_j$, then replace the boundary point $p_j$ with a point 
    $p'\in T\cap (p_j,p_j+\delta_0)$ with $p_j'<s_{k+1}$ and $p'_j-s_j<\gamma(s_j)$.
    \item[(ii)] If $p_j\in C_{j+1}$, then replace the boundary point $p_j$ with a point 
    $p'\in T\cap (p_j-\delta_0,p_j)$ with $p_j'>s_{k+1}$ and $p_j-p'_j<\gamma(s_{j+1})$.
 \end{itemize}
By construction, we have $\dot{Q}\prec \gamma$ and the estimate: $|\mu(C_j)-\mu(C_j') |\leq \epsilon/2mK$ for $1\leq j\leq m-1$. Thus:
    \[\left|\int f\,d\mu_{\dot{Q},\gamma} ({\bf C}_{{\cal B}_1}) -\int f\,d\mu_{\dot{Q'},\gamma} ({\bf C}_{{\cal B}_2})\right|\]\[\leq \left| \sum_{j=1}^{m-1}f(s_j)(\mu(C_j)-\mu(C_j'))    \right|\leq  \sum_{j=1}^{m-1}|f(s_j)||\mu(C_j)-\mu(C_j') |   \leq \sum_{j=1}^{m-1} K\epsilon/2mK\leq\epsilon/2. \]
    We conclude that
    
    \[\left|\int f\,d\mu_{\dot{Q},\gamma} ({\bf C}_{{\cal B}_1}) -r\right|\leq \left|\int f\,d\mu_{\dot{Q},\gamma} ({\bf C}_{{\cal B}_1}) -\int f\,d\mu_{\dot{Q'},\gamma} ({\bf C}_{{\cal B}_2})\right|+\left| \int f\,d\mu_{\dot{Q'},\gamma} ({\bf C}_{{\cal B}_2})-r\right| 
    <\epsilon/2+\epsilon/2=\epsilon\]

\end{proof}
The above result can be extended to $[0,1]^n$; we will skip the details.

\section{$D_\mu$-integrable non-Lebesgue integrable maps}\label{d-int-non-leb}
We will construct families of $D_\mu$-integrable but non-Lebesgue integrable functions on the Cantor space $\{0,1\}^\omega$ of Example~\ref{cantor}.

Consider the normalised Borel measure given by the unique invariant measure of the contracting Iterated Function System with probabilities~\cite{Hut81} given by the two contracting maps $h_0,h_1:\{0,1\}^\omega\to \{0,1\}^\omega$ defined by 
\[f_0(x)=0x\qquad f_1(x)=1x\]
and probabilities $p_0,p_1\geq 0$ with $p_0+p_1=1$. The invariant measure is the unique solution of the recursive equation
\[\mu=p_0{\mu\circ f_0^{-1}}+p_1{\mu\circ f_1^{-1}}\]
in ${\bf M}^1\{0,1\}^\omega$. The map $g:\{0,1\}^\omega\to [0,1]$ with
\[g(x)=\sum_{n=0}^\infty\frac{x_n}{2^{n+1}}\]
is a continuous surjective map which is one-to-one except that it sends any pair of elements of the form $y0\overline{1}$ and $y1\overline{0}$, for any finite sequence $y=y_0\ldots y_{k-1}\in \{0,1\}^k$ with some positive integer $k\geq 1$, to the same point. The set
$A:=\{y0\overline{1},y1\overline{0}:y\in \{0,1\}^k,k\in{\mathbb N}\}$
  is countable and has $\mu$-measure zero. The two maps $u_i:[0,1]\to [0,1]$ with $u_i(r)=\frac{r+i}{2}$ for $i=0,1$ satisfy: $g\circ h_i=r_i\circ g$ for $i=0,1$. 
The map $g$ sends the cylinder clopen set $B_{a_0\ldots a_{n-1}}$, which is a ball of diameter $1/2^n$ in $\{0,1\}^\omega$, to the dyadic interval \[g[B_{a_0\ldots a_{n-1}}]=\bigcap_{i=0}^{n-1}r_{a_i}\circ r_{a_{i-1}}\circ \ldots r_{a_0}[0,1],\]
which also has diameter $1/2^n$. This establishes a one-to-one relation between cylinder clopen sets and dyadic intervals. 

The normalised Borel measure $\mu\in \{0,1\}^\omega$ induces a Borel measure on $[0,1]$ via $g$ given by $\mu\circ g^{-1}\in {\bf M}^1[0,1]$. Assume $p_0=p_1=1/2$ from now on. (For $p_0\neq 1/2$, the measure $\mu\circ g^{-1}$ is singular with respect to the Lebesgue measure on $[0,1]$.) Thus, for any dyadic interval $I=\bigcap_{i=0}^{n-1}r_{a_i}\circ r_{a_{i-1}}\circ \ldots r_{a_0}[0,1]$ of length $1/2^n$ we have $\mu\circ g^{-1}(I)=1/2^n$ and, hence, $\mu\circ g^{-1}=\lambda$, the Lebesgue measure.

Let $f:[0,1]\to \R$ be any function. Recall that the  $D_\mu$-integration using the collection of all intervals of $[0,1]$ as the partitionable set of crescents (Example~\ref{unit}) is equivalent to Henstock-Kurzweil integration. 
\begin{proposition}\label{cantor-hk}
The map $f:[0,1]\to \R$ is Henstock-Kurzweil integrable if and only if $f\circ g:\{0,1\}^\omega\to \R$ is  $D_\mu$--integrable.

  \end{proposition}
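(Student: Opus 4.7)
The plan is to exhibit a measure-preserving correspondence, modulo a countable $\mu$-null set, between cylinder tagged partitions of $\{0,1\}^\omega$ and tagged partitions of $[0,1]$ by closed dyadic intervals, and to translate the $\tilde\gamma$-fine and $\gamma$-fine conditions through the continuous surjection $g$.

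First I would use Proposition~\ref{invariant-HK} to replace HK-integrability of $f$ by the equivalent $D_\lambda$-integrability with respect to the basis ${\cal B}_d$ whose dense set is the dyadic rationals, so that crescents become intervals with dyadic rational endpoints. By partitionability of the two bases together with Lemma~\ref{exist-tag}, one can further restrict test partitions on each side to cylinder tagged partitions on $\{0,1\}^\omega$ and to tagged partitions by closed dyadic intervals $I_i=[k/2^n,(k+1)/2^n]$ on $[0,1]$: every clopen partition refines to one by cylinders, and every partition by dyadic-endpoint intervals refines to one by closed dyadic intervals, so these are cofinal subnets of the respective $\dot{\cal P}{\cal G}$ and compute the same $D$-integral. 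For a cylinder $B_a$ of depth $n$, the identities $g(B_a)=[k/2^n,(k+1)/2^n]$ (where $k$ is $a$ in binary) and $\mu(B_a)=1/2^n=\lambda(g(B_a))$ then give, for any cylinder tagged partition $\dot P=\{(B_{a_i},t_i):1\le i\le m\}$ of $\{0,1\}^\omega$ with image $\dot Q=\{(g(B_{a_i}),g(t_i)):1\le i\le m\}$ on $[0,1]$, the Riemann-sum identity
\[\int (f\circ g)\,d\mu_{\dot P,\tilde\gamma}=\sum_{i=1}^m f(g(t_i))\mu(B_{a_i})=\sum_{i=1}^m f(g(t_i))\lambda(g(B_{a_i}))=\int f\,d\lambda_{\dot Q,\gamma}.\]

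For $(\Rightarrow)$, given $\epsilon>0$ and a gauge $\gamma$ on $[0,1]$ provided by Proposition~\ref{HK}, I would set $\tilde\gamma(t):=\gamma(g(t))$. A $\tilde\gamma$-fine cylinder partition $\dot P$ has cylinders of depths $n_i$ with $1/2^{n_i}<\tilde\gamma(t_i)=\gamma(g(t_i))$, hence its image $\dot Q$ is $\gamma$-fine, and the Riemann sums match. For $(\Leftarrow)$, given a gauge $\tilde\gamma$ on $\{0,1\}^\omega$ from Proposition~\ref{HK}, I would set $\gamma(x):=\min\{\tilde\gamma(t):t\in g^{-1}(x)\}$, a minimum over one or two points. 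A $\gamma$-fine tagged partition $\dot Q$ of $[0,1]$ by closed dyadic intervals lifts to a cylinder tagged partition $\dot P$ on $\{0,1\}^\omega$ by selecting, for each tag $x_i$, the unique preimage $t_i\in g^{-1}(x_i)$ lying in the cylinder assigned to $I_i$; the $\gamma$-fineness forces $\gamma(x_i)>1/2^{n_i}$, so $\tilde\gamma(t_i)\ge\gamma(x_i)>1/2^{n_i}$, and $\dot P$ is $\tilde\gamma$-fine. Once more the Riemann sums agree, and Proposition~\ref{HK} delivers the conclusion in both directions.

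The main obstacle will be the non-injectivity of $g$ on the countable $\mu$-null set $A$: every dyadic rational has two preimages, so the preimage of a closed dyadic interval is a cylinder only up to these two boundary points. Since $\mu(A)=0$, this causes no harm to the measures of crescents, but it is precisely what forces the $(\Leftarrow)$ construction to take a minimum over preimages --- without this, the lifted tag (determined by which preimage happens to lie in the assigned cylinder) could fail the $\tilde\gamma$-fine condition. Once that choice is in place, the correspondence between cofinal subnets of cylinder and dyadic-interval tagged partitions makes the two integrals coincide.
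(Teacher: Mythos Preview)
Your forward direction is correct and in fact cleaner than the paper's: the direct pullback $\tilde\gamma(t)=\gamma(g(t))$ works because if a depth-$n$ cylinder is $\tilde\gamma$-fine then $1/2^n<\gamma(g(t))$, and since $g(t)$ lies in the image interval of length $1/2^n$, that interval is automatically $\gamma$-fine. The paper instead invokes continuity of $g$ to produce $\delta_x$ with $g[O_{\delta_x}(x)]\subset O_{\gamma(g(x))}(g(x))$, which is more general but unnecessary here.

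Your reverse direction, however, has a genuine gap. You claim that $\gamma$-fineness of $I_i=[k/2^{n_i},(k+1)/2^{n_i}]$ with tag $x_i$ forces $\gamma(x_i)>1/2^{n_i}$. This is false: the condition $I_i\subset (x_i-\gamma(x_i),x_i+\gamma(x_i))$ only yields $\gamma(x_i)>\max(x_i-I_i^-,\,I_i^+-x_i)$, and when $x_i$ is near the midpoint this is about $1/2^{n_i+1}$, not $1/2^{n_i}$. Concretely, if $\tilde\gamma(t_i)$ is just above $1/2^{10}$ and $x_i=g(t_i)$ is near the centre of a dyadic interval $I_i$ of length $1/2^9$, then $I_i$ is $\gamma$-fine for your $\gamma$, but the lifted cylinder of depth $9$ is \emph{not} contained in $O_{\tilde\gamma(t_i)}(t_i)$, which is only the depth-$10$ cylinder. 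The mismatch is structural: the Cantor metric is an ultrametric whose balls are cylinders, and $g(t)$ is generically \emph{not} at the centre of the image interval, so gauge radii do not transfer directly.

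The fix is easy---take $\gamma(x)=\tfrac{1}{2}\min\{\tilde\gamma(t):t\in g^{-1}(x)\}$, so that $\gamma$-fineness gives $\tilde\gamma(t_i)\ge 2\gamma(x_i)>2\cdot\tfrac{1}{2}|I_i|=1/2^{n_i}$---but as written the step fails. The paper sidesteps the issue entirely: for each $y$ it first chooses a cylinder $B_a\subset O_{\tilde\gamma}(g^{-1}(y))$, sets $I=g[B_a]$, and then defines $\gamma'(y)$ as the distance from $y$ to the nearest endpoint of $I$ (or, for dyadic $y$, as $1/2^m$ with $1/2^m$ below both preimage gauges). This guarantees that any $\gamma'$-fine dyadic interval with tag $y$ sits inside $I$, so its preimage cylinder sits inside $B_a$ and hence in the required $\tilde\gamma$-ball. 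A minor additional point: the preimage $t_i\in g^{-1}(x_i)$ lying in the assigned cylinder need not be unique when $x_i$ is a dyadic rational interior to $I_i$, though either choice works once the gauge is corrected.
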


\begin{proof} By Lemma~\ref{invariant-HK}, we can take the collection of all dyadic intervals of $[0,1]$ as the partitionable set ${\bf C}_{\cal B}$ of crescents. Suppose first that $f$ is Henstock-Kurzweil integrable and let $\epsilon>0$ be given. There exists a gauge $\gamma:[0,1]\to \R$ such that whenever $\dot{P}\prec \gamma$, we obtain: \begin{equation}\label{hk}\left|\int f\,d\lambda_{\dot{P},\gamma}-\int f\,d\lambda\right|<\epsilon.\end{equation}
  Define a gauge $\gamma':\{0,1\}^\omega\to \R$ as follows. Let $x\in \{0,1\}^\omega$. By the continuity of $g$ there exists $\delta_x>0$ such that $g[O_\delta(x)]\subset O_{g(x)}(\gamma(g(x)))$. Put $\gamma'(x)=\delta_x$. If $\dot{Q}\prec \gamma'$ with $\dot{Q}=\{(R_i,t_i):1\leq i\leq n\}$, then we claim that $\dot{P}:=\{(g[R_i],g(t_i)),1\leq i\leq n\}$ is a $\gamma$-fine tagged partition of $[0,1]$. In fact, since each $R_i$ is a cylinder clopen sets, the closed intervals $g[R_i]$, for $1\leq i\leq n$ are dyadic intervals with disjoint interiors. By surjectivity of $g$ we have $\bigcup_{1\leq i\leq n}g[R_i]=[0,1]$. (If $g(t_i)=g(t_j)$ for $i\neq j$, then, since $R_i\cap R_j=\emptyset$, we have $t_i\neq t_j$, i.e., $t_i,t_j\in A$). By construction of $\gamma'$, we have  $\dot{P}\prec \gamma$. Thus, $\dot{P}$ satisfies Equation~\ref{hk}. On the other hand, we have:
  
  \[\int f\circ g\,d\mu_{\dot{Q},\gamma} =\sum_{i=1}^nf\circ g(t_i)\mu(R_i)=\sum_{i=1}^nf(g(t_i))\mu(g[R_i])\]\[=\int f\,d\lambda_{\dot{P},\gamma},\]
  since $\mu(R_i)=\mu(g^{-1}(g[R_i])$ as $R_i\subset g^{-1}(g[R_i])$ and we have: $g^{-1}(g[R_i])\setminus R_i\subset A$ has $\mu$ measure zero. Thus, $f\circ g$ is $D_\mu$-integrable. 
  
  Now suppose $f\circ g$ is $D_\mu$-integrable and let $\epsilon>0$ be given. There exists a gauge $\gamma:\{0,1,\}^\omega$ such that $\dot{P}\prec \gamma$ implies \[\left|\int f\circ \,d\mu_{\dot{P},\gamma}-\int f\circ g\,d\mu \right|<\epsilon.\]
  Define a gauge $\gamma':[0,1]\to\R$. If $y\notin g[A]$ (i.e., $y$ is not a dyadic number), then there exists a unique $x\in \{0,1\}^\omega$ with $y=g(x)$. Let $B_{a_0\ldots a_{n-1}}\subset O_{\gamma(x)}(x)$  for some sequence $a_0\ldots a_{n-1}\in \{0,1\}^n$ and some $n\in{\mathbb N}$. Then $y\in I^\circ$ where $I:=g[B_{a_0\ldots a_{n-1}}]$. Put $\gamma'(y)=\min\{|I^--x|,|I^+-x|\}$. If, on the other hand, $y\in g[A]$, then there exits $x_1,x_2\in A$ with $y=g(x_1)=g(x_2)$. Let $m\in{\mathbb N}$ be such that $1/2^m<\min\{\gamma(x_1),\gamma(x_2)\}$ and put $\gamma'(y)=1/2^m$. If $\dot{Q}=(R_i,t_i):1\leq i\leq k\}\prec \gamma'$ is a dyadic partition of $[0,1]$, then there exists a unique $\gamma$-fine tagged partition $\dot{P}=\{(S_i,s_i): 1\leq i\leq k\}$ such that $g[S_i]=R_i$ and $g(s_i)=t_i$ for $1\leq i\leq k$. Thus,
  \[\left |\int f\,d\lambda_{\dot{Q},\gamma'}-\int f\circ g\,d\mu\right|= \left|\int f\circ g\,d\lambda_{\dot{P},\gamma}-\int f\circ g\,d\mu\right|<\epsilon\]

\end{proof}

\begin{example}
  Let $f:[0,1]\to \R$ be given by $f(x)=\frac{1}{x}\sin \frac{1}{x^3}$. Then $g$ is Henstock-Kurzweil integrable and in fact it has an improper Riemann integral. By the change of variable $u=1/x^3$, one can show that the following limit of the Riemann integral exists:

  \[\lim_{a\to 0}\int_a^1 f(x)\,dx= \frac{\pi}{6}-\frac{1}{3} \int_0^1\frac{\sin u}{u}\,du,\]

which is the value of the Henstock-Kurzweil integral of $f$ on $[0,1]$. However, $f$ is not absolutely integrable and is thus not Lebesgue integrable. Therefore, the map $f\circ g:\{0,1\}^\omega\to \R$ is $D_\mu$-integrable but not Lebesgue integrable. 

  \end{example}

\bibliographystyle{plain}
\bibliography{biblio,CCCRandomVariables}
\newpage

\section*{Appendix}
We provide here the basic domain-theoretic notions required in this paper. 
A {\em directed complete partial order} (dcpo) $D$ is a partial order in which every (non-empty) directed set $A\subseteq D$ has a lub (least upper bound) or supremum $\sup A$. The {\em way-below relation} $\ll$ in a dcpo $(D,\sqsubseteq)$ is defined by $x\ll y$ if whenever there is a directed subset $A\subseteq D$ with $y\sqsubseteq \sup A$, then there exists $a\in A$ with $x\sqsubseteq a$.  A subset $B\subseteq D$ is a {\em basis} if for all $y\in D$ the set $\{x\in B:x\ll y\}$ is directed with lub $y$. By a {\em domain}, we mean a non-empty dcpo with a countable basis. Domains are also called {\em $\omega$-continuous dcpo's}. The Scott topology on a domain $D$ with basis $B$ has sub-basic open sets of the form $\dua b:=\{x\in D:b\ll x\}$ for any $b\in B$. 

The set of non-empty compact intervals of the real line ordered by reverse inclusion and augmented with the whole real line as bottom is the prototype bounded complete domain for real numbers denoted by $\IR$, in which $I\ll J$ iff $J \subseteq I^\circ$. It has a basis consisting of all intervals with rational endpoints.  For two non-empty compact intervals $I$ and $J$, their infimum $I\sqcap J$ is the convex closure of $I\cup J$. A basis of the Scott topology of $\IR$ is given by $\dua J=\{I\in \IR:I\subset J^\circ\}$ for any compact interval $J$. 

If $X$ is a second countable locally compact Hausdorff space then the {\em upper space} of $X$ is defined as the set of non-empty compact subsets of $X$ ordered by reverse inclusion. It is a domain, in which the supremum of a directed set $(A_i)_{i\in I}$ is the intersection $\bigcap_{i\in I}A_i$, which is non-empty and compact. The way-below relation is given by $A\ll B$ iff $A^\circ \supset B$, where $A^\circ$ is the interiors of $A$.

Let $\mathcal {O}(Y)$ be the lattice of open subsets of a topological space $Y$. Recall
from~\cite{birkhoff1940lattice,Saheb80,lawson1982valuations,JonesP89} that a {\em valuation} on a topological
space $Y$ is a map
$\nu:\mathcal{O}(Y)\to [0,1]$ which satisfies:
\begin{itemize}
\item[(i)] $\nu(a)+\nu(b)=\nu(a\cup b)+\nu(a\cap b)$
\item[(ii)] $\nu(\emptyset)=0$
\item[(iii)] $a\subseteq b\to \nu(a)\leq \nu(b)$
\end{itemize}

A {\em continuous} valuation~\cite{lawson1982valuations,JonesP89} is a valuation such that whenever
$A\subseteq\mathcal{O}(Y)$ is a directed set (wrt $\subseteq$) of open sets
of $Y$, then
\[\nu\left(\bigcup_{O\in A}O\right)=\mbox{sup}\,_{O\in A}\nu(O).\] We will work with normalised continuous valuations of a domain $D$, i.e., those with unit mass on the whole space $D$.

For any $b\in Y$, the {\em point valuation} based at $b$ is the
valuation $\delta_b:\mathcal{O}(Y)\to [0,1]$ defined by
\[\delta_b(O)=\left\{\begin{array}{cl}
1&\mbox{if $b\in  O$}\\
0&\mbox{otherwise}.\end{array}\right.\]
Any finite linear combination
$\sum_{i=1}^{n}r_i\delta_{b_i}$
of point valuations $\delta_{b_i}$ with constant coefficients
$r_i\in [0,\infty)$, ($1\leq i\leq n$) is a continuous valuation on
$Y$, called a {\em simple valuation}. 

The {\em probabilistic power domain}, ${\bf P}^1Y$, of a topological space $Y$ consists of
the set of normalised continuous valuations $\nu$ on $Y$ with $\nu(Y)= 1$
and is ordered as follows:
\[\mu\sle\nu\mbox{ iff for all open sets $O$ of $Y$, }\mu(O)\leq \nu(O).\]
The partial order $({\bf P}^1Y,\sle)$ is a dcpo with bottom in which  the lub of a directed set
$\langle\mu_i\rangle_{i\in I}$ is given by $\sup_i\mu_i=\nu$, where
for $O\in \mathcal{O}({Y})$:
\[\nu(O)=\mbox{sup}\,_{i\in I}\mu_i(O).\]

 These will correspond to probability distributions on $D$~\cite{alvarez2000extension}. Consider the {\em normalised} probabilistic power domain ${\bf P}^1D$ of a domain $D$, consisting of normalised continuous valuations with pointwise order. ${\bf P}^1D$ is an $\omega$-continuous dcpo, with a countable basis consisting of normalized valuations given by a finite sum of pointwise valuations with rational coefficient~\cite{Eda94}.

 From the above, it follows that for any compact metric space $X$, the probabilistic power domain ${\bf P}^1{\bf U}X$  of the upper space ${\bf U}X$ of $X$ is a domain. 

 The splitting lemma for normalised valuations~\cite{Eda94} with respect to the information ordering $\sle$, which is similar to the splitting lemma for valuations~\cite{JonesP89}, states: If \[\alpha=\sum_{1\leq i\leq m}p_i\delta(c_i),\quad\beta=\sum_{1\leq j\leq n}q_j\delta(d_j),\]are two normalised valuations on a continuous dcpo $D$ then $\alpha\sqsubseteq\beta$ iff there exist $t_{ij}\in [0,1]$ for $1\leq i\leq m$, $1\leq j\leq n$ such that
\begin{itemize}
\item $\sum_{i=1}^m t_{ij}=q_j$ for each $j=1, \ldots, n$.
\item $\sum_{j=1}^n t_{ij}=p_i$ for each $i=1, \ldots, m$.
\item $t_{ij}>0\,\Rightarrow\,c_i\sqsubseteq d_j$. 
\end{itemize}

 The splitting lemma for normalised valuations~\cite{Eda94} with respect to the way-below relation $\ll$, which is similar to the splitting lemma for valuations~\cite{JonesP89}, is as follows.

 Suppose $\alpha=\sum_{1\leq i\leq m}p_i\delta(c_i)$ and $\beta=\sum_{1\leq j\leq n}q_j\delta(d_j)$ are normalised valuations on a continuous dcpo. 
\begin{proposition}\label{sp-way}\cite[Proposition 3.5]{Eda94} We have $\alpha\ll\beta$ if and only if there exist $t_{ij}\in [0,1]$ for $1\leq i\leq m$ and $1\leq j\leq n$ such that 
\begin{itemize}
\item $c_{i_0}=\bot$ for some $i_0$ with $1\leq i_0\leq  m$ and for all $j$ with $1\leq j\leq m$, we have $t_{i_0j}>0$,
\item $\sum_{i=1}^m t_{ij}=q_j$ for each $j=1, \ldots, n$,
\item $\sum_{j=1}^n t_{ij}=p_i$ for each $i=1, \ldots, m$.\
\item $t_{ij}>0\,\Rightarrow c_i\ll d_j$. 
\end{itemize}
\end{proposition}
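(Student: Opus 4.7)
The plan is to prove both directions of the equivalence, leaning heavily on the $\sle$-splitting lemma already stated in the appendix. I would treat sufficiency by a ``pushing through'' argument along an arbitrary directed set with supremum above $\beta$, and necessity by combining the hypothesis $\alpha\ll\beta$ with a carefully chosen directed approximation of $\beta$ from below whose mass is perturbed onto way-below points, plus a small amount of $\bot$-mass to serve as slack.

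For sufficiency ($\Leftarrow$), suppose a transport matrix $(t_{ij})$ with the four stated properties is given, and let $A\subseteq\mathbf{P}^1 D$ be an arbitrary directed set with $\beta\sle\sup A$. The goal is to find some $\nu=\sum_k r_k\delta(e_k)\in A$ with $\alpha\sle\nu$. For large enough $\nu\in A$, the $\sle$-splitting lemma produces a matrix $(s_{jk})$ witnessing $\beta\sle\nu$, and the crucial point is that since $A$ is directed and cofinal above each $d_j$, we may choose $\nu$ so large that $s_{jk}>0$ forces $d_j\sle e_k$ for all $k$, and moreover $c_i\sle e_k$ whenever $t_{ij}>0$ and $s_{jk}>0$ (using the $c_i\ll d_j$ hypothesis). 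The product weights $u_{ik}:=\sum_j t_{ij}s_{jk}/q_j$ then have the correct marginals with respect to $\alpha$ and $\nu$, and satisfy $u_{ik}>0\Rightarrow c_i\sle e_k$; reapplying the $\sle$-splitting lemma gives $\alpha\sle\nu$, as required.

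For necessity ($\Rightarrow$), assume $\alpha\ll\beta$. For each small $\epsilon>0$ and each choice of basis elements $d_j^\epsilon\ll d_j$ with $d_j^\epsilon$ climbing a local basis of $d_j$, form the normalised simple valuation
\[\beta_\epsilon \;:=\; \epsilon\,\delta(\bot)\;+\;\sum_{j=1}^n(1-\epsilon)q_j\,\delta(d_j^\epsilon).\]
As $\epsilon\to 0^+$ and the $d_j^\epsilon$ increase, these valuations form a directed set in $\mathbf{P}^1 D$ with supremum $\beta$. By $\alpha\ll\beta$, one has $\alpha\sle\beta_\epsilon$ for some sufficiently small $\epsilon>0$ and appropriate $d_j^\epsilon$. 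The $\sle$-splitting lemma applied to $\alpha\sle\beta_\epsilon$ then yields a transport matrix $\tilde t_{ij}$ onto the $n+1$ atoms $\bot,d_1^\epsilon,\ldots,d_n^\epsilon$. Isolating the column corresponding to $\bot$ (whose mass is $\epsilon>0$) and writing $c_{i_0}=\bot$ for the corresponding row, one can redistribute: since $\bot\ll d_j$ for every $j$, the $\epsilon$-mass at $\bot$ may be split into strictly positive contributions $t_{i_0 j}>0$ for each $j$, and the remaining entries inherit $t_{ij}>0\Rightarrow c_i\sle d_j^\epsilon\ll d_j$. The resulting matrix $(t_{ij})$ satisfies all four listed conditions.

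The main obstacle is the promotion of $\sle$ to $\ll$ in the sufficiency direction: one must convert the hypothesis $c_i\ll d_j$ into the conclusion that, for $\nu$ far enough out in an arbitrary directed set with supremum above $\beta$, the atoms $e_k$ of $\nu$ that receive $d_j$-mass actually lie above $c_i$. The role of the distinguished $\bot$-row $i_0$ with $t_{i_0 j}>0$ for every $j$ is precisely to provide unavoidable slack so that any residual weight which cannot immediately be routed to an appropriate $e_k$ can be rerouted through the universally-transportable $\bot$-row without violating the marginal constraints. On the necessity side, the analogous role of the $\epsilon\,\delta(\bot)$ summand in the approximation $\beta_\epsilon$ is to guarantee that the limiting matrix $(t_{ij})$ has a row with all-positive entries at $\bot$, which is what distinguishes $\ll$ from $\sle$ in this characterisation.
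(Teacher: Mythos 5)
First, a point of comparison: the paper offers no proof of this statement at all --- it is quoted in the appendix directly from \cite[Proposition 3.5]{Eda94} as background material --- so your argument has to stand entirely on its own. Your necessity direction ($\Rightarrow$) is essentially sound: the directed family $\beta_\epsilon=\epsilon\,\delta(\bot)+\sum_j(1-\epsilon)q_j\,\delta(d_j^\epsilon)$ with $d_j^\epsilon\ll d_j$ does have supremum $\beta$, the hypothesis $\alpha\ll\beta$ then gives $\alpha\sle\beta_\epsilon$ for some member, and redistributing the $\bot$-column of the resulting transport matrix proportionally to the $q_j$ (setting $t_{ij}=\tilde t_{ij}+\tilde t_{i0}q_j$) yields all four properties, including the distinguished row $i_0$ with $c_{i_0}=\bot$ and $t_{i_0j}>0$ for all $j$.

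The sufficiency direction ($\Leftarrow$), however, rests on a step that is simply false. From $\beta\sle\sup A$ you claim that ``for large enough $\nu\in A$'' the $\sle$-splitting lemma produces a matrix $(s_{jk})$ witnessing $\beta\sle\nu$. But $\beta\sle\sup A$ for a directed $A$ does \emph{not} yield $\beta\sle\nu$ for any $\nu\in A$; that inference is precisely the assertion $\beta\ll\beta$, which fails for simple valuations in general (e.g.\ $\delta_d$ for a non-finite element $d$ is the supremum of the directed family $\{\delta_{d'}:d'\ll d\}$ without dominating any of its members). Likewise the phrase ``$A$ is directed and cofinal above each $d_j$'' has no justification, and the elements of $A$ need not even be simple valuations, so writing $\nu=\sum_k r_k\delta(e_k)$ is itself unwarranted. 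Your composition $u_{ik}=\sum_j t_{ij}s_{jk}/q_j$ would be fine if $(s_{jk})$ existed, but the whole content of this direction is to descend from $\sup A$ to a single $\nu\in A$ using only the strict slack created by the $\bot$-row. The standard route is quantitative: for every $S\subseteq\{1,\ldots,m\}$ with $i_0\notin S$ and $\sum_{i\in S}p_i>0$, the four conditions force the strict inequality $\sum_{i\in S}p_i<\beta\bigl(\bigcup_{i\in S}\dua c_i\bigr)$ (because each relevant column also carries the positive mass $t_{i_0j}$), hence $\sum_{i\in S}p_i<\sup_{\nu\in A}\nu\bigl(\bigcup_{i\in S}\dua c_i\bigr)$; directedness lets one $\nu\in A$ realise all these finitely many strict inequalities at once, and a max-flow/Hall-type argument on the sets $\dua c_i$ then gives $\alpha\sle\nu$. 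None of this is present in your sketch, so the forward half of the equivalence remains unproved.
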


\end{document}